\documentclass[11pt,twoside,leqno]{article}
\usepackage[all]{xy}
\usepackage{amsfonts,amssymb,amscd,amsmath,amsthm,enumerate,verbatim,calc,latexsym}
\usepackage[colorlinks,linkcolor=black,citecolor=black,bookmarksnumbered]{hyperref}
\pagestyle{myheadings}
\markboth{\emph{A. Vahidi, F. Hassani, and E. Hoseinzade}}{\emph{Extension functors of generalized local cohomology modules}}
\newtheorem{thm}{Theorem}[section]
\newtheorem{cor}[thm]{Corollary}
\newtheorem{lem}[thm]{Lemma}

\newtheorem{rem}[thm]{Remark}
\newtheorem{ques}[thm]{Question}


\textwidth=16.26cm
\textheight=23cm
\topmargin=-0.75cm
\oddsidemargin=-0.00cm
\evensidemargin=-0.00cm
\headheight=14pt
\headsep=0.75cm
\numberwithin{equation}{section}
\hyphenation{semi-stable}
\emergencystretch=10pt
\bibliographystyle{amsplain}
\begin{document}
\title{\bf Extension functors of generalized local cohomology modules}
\author{
\\{\bf Alireza Vahidi}\\
\small Department of Mathematics, Payame Noor University (PNU), P.O.BOX, 19395-4697, Tehran, Iran\\
\small E-mail: vahidi.ar@pnu.ac.ir\\
\\{\bf Faisal Hassani}\\
\small Department of Mathematics, Payame Noor University (PNU), P.O.BOX, 19395-4697, Tehran, Iran\\
\small E-mail: f\underline{ }hasani@pnu.ac.ir\\
\\{\bf Elham Hoseinzade}\\
\small Department of Mathematics, Payame Noor University (PNU), P.O.BOX, 19395-4697, Tehran, Iran\\
\small E-mail: el.hosseinzade.phd@gmail.com
}

\date{}
\maketitle

\renewcommand{\thefootnote}{}
\renewcommand{\thefootnote}{\arabic{footnote}}
\setcounter{footnote}{0}
\begin{abstract}
Let $R$ be a commutative Noetherian ring with non-zero identity, $\mathfrak{a}$ an ideal of $R$, $M$ a finitely generated $R$--module, and $X$ an arbitrary $R$--module. In this paper, for non-negative integers $s, t$ and a finitely generated $R$--module $N$, we study the membership of $\operatorname{Ext}^{s}_{R}(N, \operatorname{H}^{t}_{\mathfrak{a}}(M, X))$ in Serre subcategories of the category of $R$--modules and present some upper bounds for the injective dimension and the Bass numbers of $\operatorname{H}^{t}_{\mathfrak{a}}(M, X)$. We also give some results on cofiniteness and minimaxness of $\operatorname{H}^{t}_{\mathfrak{a}}(M, X)$ and finiteness of $\operatorname{Ass}_R(\operatorname{H}^{t}_{\mathfrak{a}}(M, X))$.\\

{\bf 2010 Mathematics Subject Classification:} 13D07, 13D45.

{\bf Keywords:} Associated prime ideals; Bass numbers; Cofinite modules; Extension functors; Generalized local cohomology modules; Injective dimensions; Minimax modules.
\end{abstract}
\section{Introduction}\label{1}
Throughout $R$ is a commutative Noetherian ring with non-zero identity, $\mathfrak{a}$ is an ideal of $R$, $M$ is a finite (i.e. finitely generated) $R$--module, and $X$ is an arbitrary $R$--module which is not necessarily finite. For basic results, notations, and terminology not given in this paper, readers are referred to \cite{BSh, BH, Rot}.

Let $L$ be a finite $R$--module. Grothendieck, in \cite{G}, conjectured that $\operatorname{Hom}_R(R/\mathfrak{a}, \operatorname{H}^j_\mathfrak{a}(L))$ is finite for all $j$. In \cite {Ha2}, Hartshorne gave a counter-example and raised the question whether $\operatorname{Ext}^{i}_{R}(R/\mathfrak{a}, \operatorname{H}^{j}_{\mathfrak{a}}(L))$ is finite for all $i$ and $j$. The $j$th generalized local cohomology module of $M$ and $X$ with respect to $\mathfrak{a}$,
$$\operatorname{H}^j_\mathfrak{a}(M, X)\cong \underset{n\in \mathbb{N}}\varinjlim \operatorname{Ext}^{j}_{R}(M/{\mathfrak{a}}^{n}M, X),$$
was introduced by Herzog in \cite {He}. It is clear that $\operatorname{H}^j_\mathfrak{a}(R, X)$ is just the ordinary local cohomology module $\operatorname{H}^j_\mathfrak{a}(X)$ of $X$ with respect to $\mathfrak{a}$. As a generalization of Hartshorne's question, we have the following question for generalized local cohomology modules (see \cite[Question 2.7]{Y}).

\begin{ques}
When is $\operatorname{Ext}^{i}_{R}(R/\mathfrak{a}, \operatorname{H}^{j}_{\mathfrak{a}}(M, L))$ finite for all $i$ and $j$?
\end{ques}

In this paper, we study $\operatorname{Ext}_R^i(N, \operatorname{H}_\mathfrak{a}^j(M, X))$ in general for a finite $R$--module $N$ and an arbitrary $R$--module $X$.

In Section \ref{2}, we present some technical results (Lemma \ref{2-1}, Theorem \ref{2-2}, Theorem \ref{2-9}, and Theorem \ref{2-14}) which show that, in certain situation, for non-negative integers $s$, $t$, $s'$, $t'$ with $s+ t= s'+ t'$, $\operatorname{Ext}^{s}_{R}(N, \operatorname{H}^{t}_{\mathfrak{a}}(M, X))\cong \operatorname{H}^{s'}_{\mathfrak{a}}(\operatorname{Tor}^R_{t'}(N, M), X)$ and the $R$--modules $\operatorname{Ext}_R^s(N, \operatorname{H}_\mathfrak{a}^t(M, X))$ and $\operatorname{H}^{s}_{\mathfrak{a}}(\operatorname{Tor}^{R}_{t}(N, M), X)$ are in a Serre subcategory of the category of $R$--modules (i.e. the class of $R$--modules which is closed under taking submodules, quotients, and extensions).

Section \ref{3} consists of applications. In this section, we apply the results of Section \ref{2} to some Serre subcategories (e.g. the class of zero $R$--modules and the class of finite $R$--modules) and deduce some properties of generalized local cohomology modules. In Corollaries \ref{3-1}--\ref{3-3}, we present some upper bounds for the injective dimension and the Bass numbers of generalized local cohomology modules. We study cofiniteness and minimaxness of generalized local cohomology modules in Corollaries \ref{3-4}--\ref{3-8}. Recall that, an $R$--module $X$ is said to be $\mathfrak{a}$--cofinite (resp. minimax) if $\operatorname{Supp}_R(X)\subseteq \operatorname{Var}(\mathfrak{a})$ and $\operatorname{Ext}^{i}_{R}(R/\mathfrak{a}, X)$ is finite for all $i$ \cite{Ha2} (resp. there is a finite submodule $X'$ of $X$ such that $X/X'$ is Artinian \cite{Z}) where $\operatorname{Var}(\mathfrak{a})= \{\mathfrak{p}\in \operatorname{Spec}(R)\ :\  \mathfrak{p}\supseteq \mathfrak{a}\}$. We show that if $\operatorname{Ext}^{i}_{R}(R/\mathfrak{a}, X)$ is finite for all $i\leq t$  and $\operatorname{H}^{i}_{\mathfrak{a}}(M, X)$ is minimax for all $i< t$, then $\operatorname{H}^{i}_{\mathfrak{a}}(M, X)$ is $\mathfrak{a}$--cofinite for all $i< t$ and $\operatorname{Hom}_{R}(R/\mathfrak{a}, \operatorname{H}^{t}_{\mathfrak{a}}(M, X))$ is finite. We prove that if $\operatorname{Ext}^{i}_{R}(R/\mathfrak{a}, X)$ is finite for all $i\leq \operatorname{ara}(\mathfrak{a})$, where $\operatorname{ara}(\mathfrak{a})$ is the arithmetic rank of $\mathfrak{a}$, and $\operatorname{H}^{i}_{\mathfrak{a}}(M, X)$ is $\mathfrak{a}$--cofinite for all $i\neq t$, then $\operatorname{H}^{t}_{\mathfrak{a}}(M, X)$ is also an $\mathfrak{a}$--cofinite $R$--module. We show that if $R$ is local, $\dim(R/\mathfrak{a})\leq 2$, and $\operatorname{Ext}^i_R(R/\mathfrak{a}, X)$ is finite for all $i\leq t+ 1$, then $\operatorname{H}^{i}_{\mathfrak{a}}(M, X)$ is $\mathfrak{a}$--cofinite for all $i< t$ if and only if $\operatorname{Hom}_R(R/\mathfrak{a}, \operatorname{H}^{i}_{\mathfrak{a}}(M, X))$ is finite for all $i\leq t$. We also prove that if $R$ is local, $\dim(R/\mathfrak{a})\leq 2$, $\operatorname{Ext}^i_R(R/\mathfrak{a}, X)$ is finite for all $i$, and $\operatorname{H}^{2i}_{\mathfrak{a}}(M, X)$ (or $\operatorname{H}^{2i+ 1}_{\mathfrak{a}}(M, X)$) is $\mathfrak{a}$--cofinite for all $i$, then $\operatorname{H}^{i}_{\mathfrak{a}}(M, X)$ is $\mathfrak{a}$--cofinite for all $i$. In Corollary \ref{3-9}, we state the weakest possible conditions which yield the finiteness of associated prime ideals of generalized local cohomology modules. Note that, one can apply the results of Section \ref{2} to other Serre subcategories to deduce more properties of generalized local cohomology modules.
\section{Main results}\label{2}



The following lemma is crucial in this paper.

\begin{lem}\label{2-1}
Let $X$ be an arbitrary $R$--module and let $M, N$ be finite $R$--modules. Then we have third quadrant spectral sequences
\begin{enumerate}
  \item[\emph{(i)}] ${}^{I}E_{2}^{p, q}:= \operatorname{H}^{p}_{\mathfrak{a}}(\operatorname{Tor}^{R}_{q}(N, M), X)\underset{p}\Longrightarrow H^{p+ q}$ and
  \item[\emph{(ii)}]  ${}^{II}E_{2}^{p, q}:= \operatorname{Ext}^{p}_{R}(N, \operatorname{H}^{q}_{\mathfrak{a}}(M, X))\underset{p}\Longrightarrow H^{p+ q}$.
\end{enumerate}
\end{lem}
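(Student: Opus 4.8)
The plan is to realise both spectral sequences as the two spectral sequences of a single first–quadrant bicomplex. I would choose a resolution $F_\bullet\to N$ by finitely generated free $R$–modules and an injective resolution $X\to I^\bullet$, and form
\[
D^{a,b}:=\Gamma_{\mathfrak a}\!\big(\operatorname{Hom}_R(F_a\otimes_R M,\,I^b)\big)\;\cong\;\operatorname{Hom}_R\!\big(F_a,\,\Gamma_{\mathfrak a}(\operatorname{Hom}_R(M,I^b))\big),
\]
the isomorphism being Hom–tensor adjunction together with the fact that $\operatorname{Hom}_R(F_a,-)$ commutes with $\Gamma_{\mathfrak a}$ since $F_a$ is finitely generated and free. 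Being first quadrant, $D^{\bullet,\bullet}$ has two spectral sequences, both converging to $H^{n}(\operatorname{Tot} D)$; I take this common abutment as the $H^{p+q}$ of the statement (in the indexing of the statement these become third–quadrant after the usual reindexing). The real content is then to compute the two $E_2$–pages.

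First I would run the spectral sequence that takes cohomology in the injective ($b$) direction first. Since $F_a$ is free, $\operatorname{Hom}_R(F_a,-)$ is exact, and using the standard identification $H^b\!\big(\Gamma_{\mathfrak a}\operatorname{Hom}_R(L,I^\bullet)\big)=\operatorname{H}^b_{\mathfrak a}(L,X)$ for a finite module $L$ together with $F_a\otimes_R M\cong M^{\oplus\operatorname{rk}F_a}$, the $E_1$–page is $\operatorname{Hom}_R\!\big(F_a,\operatorname{H}^b_{\mathfrak a}(M,X)\big)$. Taking the remaining cohomology in $a$ produces $\operatorname{Ext}^a_R\!\big(N,\operatorname{H}^b_{\mathfrak a}(M,X)\big)$, which after relabelling $(p,q)=(a,b)$ is spectral sequence (ii).

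The other spectral sequence takes cohomology in the free ($a$) direction first, and this is where the main difficulty lies. Applying the exact contravariant functor $\operatorname{Hom}_R(-,I^b)$ to $F_\bullet\otimes_R M$ turns its homology $\operatorname{Tor}^R_a(N,M)$ into the cohomology $\operatorname{Hom}_R(\operatorname{Tor}^R_a(N,M),I^b)$, but $\Gamma_{\mathfrak a}$ is only left exact and need not commute with this cohomology. The crux is therefore the auxiliary fact that for every finitely generated $L$ and injective $I$ the module $\operatorname{Hom}_R(L,I)$ is $\Gamma_{\mathfrak a}$–acyclic. I would prove this by reducing to $I=\operatorname{E}(R/\mathfrak p)$: if $\mathfrak a\subseteq\mathfrak p$ then $\operatorname{E}(R/\mathfrak p)$ is $\mathfrak a$–torsion (it is $\mathfrak p$–torsion by Matlis theory and $\mathfrak a^n\subseteq\mathfrak p^n$), so $\operatorname{Hom}_R(L,\operatorname{E}(R/\mathfrak p))$ is $\mathfrak a$–torsion and hence acyclic; if $\mathfrak a\not\subseteq\mathfrak p$ some element of $\mathfrak a$ acts invertibly on $\operatorname{E}(R/\mathfrak p)$, hence on $\operatorname{Hom}_R(L,\operatorname{E}(R/\mathfrak p))$, forcing all $\operatorname{H}^i_{\mathfrak a}$ to vanish; the general case follows because $L$ finite makes $\operatorname{Hom}_R(L,-)$ respect the direct sum decomposition of $I$ and local cohomology commutes with direct sums. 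Since every term and every cocycle and coboundary module of $\operatorname{Hom}_R(F_\bullet\otimes_R M,I^b)$ is of the form $\operatorname{Hom}_R(\text{finite},I^b)$, this acyclicity lets $\Gamma_{\mathfrak a}$ pass through the $a$–cohomology (via the short exact sequences of cocycles, coboundaries, and the finite modules $\operatorname{Tor}^R_a(N,M)$), giving $E_1$–term $\Gamma_{\mathfrak a}\operatorname{Hom}_R(\operatorname{Tor}^R_a(N,M),I^b)$; taking cohomology in $b$ then yields $\operatorname{H}^b_{\mathfrak a}(\operatorname{Tor}^R_a(N,M),X)$, which after relabelling $(p,q)=(b,a)$ is spectral sequence (i). I expect the $\Gamma_{\mathfrak a}$–acyclicity, and the verification that it is exactly what is needed to commute $\Gamma_{\mathfrak a}$ past the $a$–cohomology, to be the only non–formal step; the remainder is the standard bookkeeping of the two spectral sequences of a bicomplex as in \cite{Rot}.
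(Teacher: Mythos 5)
Your proposal is correct, but it follows a genuinely different route from the paper's. The paper fixes an injective resolution $\operatorname{E}^{\bullet X}$ of $X$, forms the complex $\operatorname{Hom}_R(M,\Gamma_{\mathfrak a}(\operatorname{E}^{\bullet X}))$ (whose cohomology is $\operatorname{H}^{\bullet}_{\mathfrak a}(M,X)$ by \cite[Lemma 2.1(i)]{DST}), takes a Cartan--Eilenberg injective resolution $T^{\bullet,\bullet}$ of that complex, and applies $\operatorname{Hom}_R(N,-)$; the two iterated cohomologies are then identified using the splitting properties of Cartan--Eilenberg resolutions, the fact that $\Gamma_{\mathfrak a}$ of an injective module is injective \cite[Proposition 2.1.4]{BSh}, and the duality $\operatorname{Ext}^q_R(N,\operatorname{Hom}_R(M,I))\cong\operatorname{Hom}_R(\operatorname{Tor}^R_q(N,M),I)$ for injective $I$ \cite[Corollary 10.63]{Rot}. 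You instead resolve $N$ by finite free modules and work with the single explicit bicomplex $\Gamma_{\mathfrak a}\operatorname{Hom}_R(F_\bullet\otimes_R M,I^\bullet)\cong\operatorname{Hom}_R\bigl(F_\bullet,\operatorname{Hom}_R(M,\Gamma_{\mathfrak a}(I^\bullet))\bigr)$, in effect computing the hyperext of $N$ against the same complex by resolving the first variable rather than the second; your substitute for the paper's two quoted ingredients is the $\Gamma_{\mathfrak a}$--acyclicity of $\operatorname{Hom}_R(L,I)$ for $L$ finite and $I$ injective, proved via Matlis theory, together with the observation that every term, cocycle, coboundary, and cohomology module of the rows has this form, so that $\Gamma_{\mathfrak a}$ passes through row cohomology. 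That acyclicity claim is genuinely needed (for non-flat $L$ the module $\operatorname{Hom}_R(L,I)$ need not be injective) and your proof of it is correct, as is the splicing argument with the short exact sequences of cocycles and coboundaries. What your approach buys is the complete avoidance of Cartan--Eilenberg machinery at the cost of proving the acyclicity lemma by hand; what the paper's approach buys is brevity, since the Matlis-theoretic content is outsourced to the two cited results. Both arguments realize (i) and (ii) as the two filtration spectral sequences of one first-quadrant (equivalently, third-quadrant in cohomological indexing) bicomplex, hence with a common abutment, which is exactly what the lemma asserts.
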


\begin{proof}
Assume that
$$\operatorname{E}^{\bullet X}: 0\longrightarrow E^0\longrightarrow  \cdots \longrightarrow  E^i\longrightarrow \cdots$$
is a deleted injective resolution of $X$. By applying $\operatorname{Hom}_R(M, \Gamma_\mathfrak{a}(-))$ to $\operatorname{E}^{\bullet X}$, we get the complex
$$\operatorname{Hom}_R(M, \Gamma_\mathfrak{a}(\operatorname{E}^{\bullet X})): 0\longrightarrow  \operatorname{Hom}_R(M, \Gamma_\mathfrak{a}(E^0))\longrightarrow \cdots \longrightarrow \operatorname{Hom}_R(M, \Gamma_\mathfrak{a}(E^i))\longrightarrow \cdots.$$
Assume that
$$0\longrightarrow \operatorname{Hom}_R(M, \Gamma_\mathfrak{a}(\operatorname{E}^{\bullet X}))\longrightarrow T^{\bullet, 0}\longrightarrow \cdots \longrightarrow T^{\bullet, j}\longrightarrow \cdots$$
is a Cartan--Eilenberg injective resolution of $\operatorname{Hom}_R(M, \Gamma_\mathfrak{a}(\operatorname{E}^{\bullet X}))$ which exists from \cite[Theorem 10.45]{Rot}. Now, consider the third quadrant bicomplex $\mathcal{T}= \{\operatorname{Hom}_R(N, T^{p, q})\}$. We denote the total complex of $\mathcal{T}$ by $\operatorname{Tot}(\mathcal{T})$.

(i) Let ${}^{I}\operatorname{E}_{2}= \operatorname{H}'^{p}\operatorname{H}''^{p, q}(\mathcal{T})$ be the first iterated homology of $\mathcal{T}$ with respect to the first filtration. We have
$$\begin{array}{ll}
\operatorname{H}''^{p, q}(\mathcal{T})\!\!&\cong \operatorname{H}^{q}(\operatorname{Hom}_R(N, T^{p, \bullet}))\\
                       &\cong \operatorname{Ext}^{q}_R(N, \operatorname{Hom}_R(M, \Gamma_\mathfrak{a}(\operatorname{E}^{p})))\\
                       &\cong \operatorname{Hom}_R(\operatorname{Tor}^{R}_{q}(N, M), \Gamma_\mathfrak{a}(\operatorname{E}^{p}))
\end{array}$$
from \cite[Proposition 2.1.4]{BSh} and \cite[Corollary 10.63]{Rot}. Therefore, by \cite[Lemma 2.1(i)]{DST},
$$\begin{array}{ll}
{}^{I}\operatorname{E}_{2}^{p, q}\!\!&\cong \operatorname{H}'^{p}\operatorname{H}''^{p, q}(\mathcal{T})\\
                       &\cong \operatorname{H}^{p}(\operatorname{Hom}_R(\operatorname{Tor}^{R}_{q}(N, M), \Gamma_\mathfrak{a}(\operatorname{E}^{\bullet X})))\\
                       &\cong \operatorname{H}^{p}_{\mathfrak{a}}(\operatorname{Tor}^{R}_{q}(N, M), X)\\
\end{array}$$
which yields the third quadrant spectral sequence
$${}^{I}\operatorname{E}_{2}^{p, q}:= \operatorname{H}^{p}_{\mathfrak{a}}(\operatorname{Tor}^{R}_{q}(N, M), X)\underset{p}\Longrightarrow \operatorname{H}^{p+ q}(\operatorname{Tot}(\mathcal{T})).$$

(ii) Let ${}^{II}\operatorname{E}_{2}= \operatorname{H}''^{p}\operatorname{H}'^{q, p}(\mathcal{T})$ be the second iterated homology of $\mathcal{T}$ with respect to the second filtration. Note that every short exact sequence of injective modules splits and so it remains split after applying the functor $\operatorname{Hom}_R(N, -)$. By using this fact and the fact that $T^{\bullet, \bullet}$ is a Cartan--Eilenberg injective resolution of $\operatorname{Hom}_R(M, \Gamma_\mathfrak{a}(\operatorname{E}^{\bullet X}))$, we get
$$\begin{array}{ll}
\operatorname{H}'^{q, p}(\mathcal{T})\!\!&\cong \operatorname{H}^{q}(\operatorname{Hom}_R(N, T^{\bullet, p}))\\
                           &\cong \operatorname{Hom}_R(N, \operatorname{H}^{q}(T^{\bullet, p}))\\
                           &\cong \operatorname{Hom}_R(N, \operatorname{H}^{q, p}).
\end{array}$$
Therefore, by \cite[Lemma 2.1(i)]{DST},
$$\begin{array}{ll}
{}^{II}\operatorname{E}_{2}^{p, q}\!\!&\cong \operatorname{H}''^{p}\operatorname{H}'^{q, p}(\mathcal{T})\\
                        &\cong \operatorname{H}^{p}(\operatorname{Hom}_R(N, \operatorname{H}^{q, \bullet}))\\
                        &\cong \operatorname{Ext}^{p}_{R}(N, \operatorname{H}^{q}_{\mathfrak{a}}(M, X))
\end{array}$$
which gives the third quadrant spectral sequence
$${}^{II}\operatorname{E}_{2}^{p, q}:= \operatorname{Ext}^{p}_{R}(N, \operatorname{H}^{q}_{\mathfrak{a}}(M, X))\underset{p}\Longrightarrow \operatorname{H}^{p+ q}(\operatorname{Tot}(\mathcal{T}))$$
as desired.
\end{proof}


Assume that $s, t, s', t'$ are non-negative integers and that $N$ is a finite $R$--module.  In the following theorem, we provide an isomorphism between the $R$--modules $\operatorname{Ext}^{s}_{R}(N, \operatorname{H}^{t}_{\mathfrak{a}}(M, X))$ and $\operatorname{H}^{s'}_{\mathfrak{a}}(\operatorname{Tor}^R_{t'}(N, M), X)$.

\begin{thm}\label{2-2}
Suppose that $X$ is an arbitrary $R$--module, $M, N$ are finite $R$--modules, and $s, t, s', t'$ are non-negative integers such that $(n=) s+ t= s'+ t'$. Assume also that
\begin{enumerate}
  \item[\emph{(i)}] $\operatorname{Ext}^{n+ 1- i}_{R}(N, \operatorname{H}^{i}_{\mathfrak{a}}(M, X))= 0$ for all $i< t$,
  \item[\emph{(ii)}]  $\operatorname{Ext}^{n- 1- i}_{R}(N, \operatorname{H}^{i}_{\mathfrak{a}}(M, X))= 0$ for all $i> t$,
  \item[\emph{(iii)}]  $\operatorname{Ext}^{n- i}_{R}(N, \operatorname{H}^{i}_{\mathfrak{a}}(M, X))= 0$ for all $i\neq t$,
  \item[\emph{(iv)}]   $\operatorname{H}^{n+ 1- i}_{\mathfrak{a}}(\operatorname{Tor}^{R}_{i}(N, M), X)= 0$ for all $i< t'$,
  \item[\emph{(v)}]  $\operatorname{H}^{n- 1- i}_{\mathfrak{a}}(\operatorname{Tor}^{R}_{i}(N, M), X)= 0$ for all $i> t'$, and
  \item[\emph{(vi)}] $\operatorname{H}^{n- i}_{\mathfrak{a}}(\operatorname{Tor}^{R}_{i}(N, M), X)= 0$ for all $i\neq t'$.
\end{enumerate}
Then $\operatorname{Ext}^{s}_{R}(N, \operatorname{H}^{t}_{\mathfrak{a}}(M, X))\cong \operatorname{H}^{s'}_{\mathfrak{a}}(\operatorname{Tor}^R_{t'}(N, M), X)$.
\end{thm}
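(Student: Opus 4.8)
The plan is to run a comparison of the two spectral sequences supplied by Lemma \ref{2-1}. The crucial point is that both of them abut to the \emph{same} family of modules $H^{n}=\operatorname{H}^{n}(\operatorname{Tot}(\mathcal{T}))$. Fixing $n=s+t=s'+t'$, I will show that the six vanishing hypotheses force each spectral sequence to collapse, along the $n$th diagonal, onto a single surviving entry: the entry ${}^{II}E_{2}^{s,t}=\operatorname{Ext}^{s}_{R}(N,\operatorname{H}^{t}_{\mathfrak{a}}(M,X))$ for the second sequence, and ${}^{I}E_{2}^{s',t'}=\operatorname{H}^{s'}_{\mathfrak{a}}(\operatorname{Tor}^{R}_{t'}(N,M),X)$ for the first. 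Each surviving entry is then isomorphic to $H^{n}$, and composing the two isomorphisms yields the assertion.

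I would treat the second spectral sequence first. Its differentials have the shape $d_{r}\colon {}^{II}E_{r}^{p,q}\to {}^{II}E_{r}^{p+r,\,q-r+1}$, so the entry at $(s,t)$ emits a differential to $(s+r,t-r+1)$ and receives one from $(s-r,t+r-1)$. Writing $s=n-t$, the outgoing target sits at $(n+1-i,i)$ with $i=t-r+1<t$ for every $r\geq 2$, and this entry is already zero on the $E_{2}$ page by hypothesis (i); since every later page is a subquotient of $E_{2}$, the differential out of $(s,t)$ vanishes. Symmetrically, the incoming source sits at $(n-1-i,i)$ with $i=t+r-1>t$, which is killed by hypothesis (ii). Hence no nonzero differential touches the distinguished spot and ${}^{II}E_{2}^{s,t}={}^{II}E_{\infty}^{s,t}$.

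Next, because each ${}^{II}E_{\infty}^{p,q}$ is a subquotient of ${}^{II}E_{2}^{p,q}$, hypothesis (iii) makes every entry ${}^{II}E_{\infty}^{n-i,i}$ with $i\neq t$ vanish. As $H^{n}$ carries a finite filtration whose graded quotients are exactly the ${}^{II}E_{\infty}^{p,q}$ with $p+q=n$, and only the $(s,t)$-quotient survives, I obtain $H^{n}\cong {}^{II}E_{\infty}^{s,t}=\operatorname{Ext}^{s}_{R}(N,\operatorname{H}^{t}_{\mathfrak{a}}(M,X))$. An identical argument applied to the first spectral sequence, with hypotheses (iv), (v), (vi) playing the roles of (i), (ii), (iii) and the entry $(s',t')$ replacing $(s,t)$, gives $H^{n}\cong {}^{I}E_{\infty}^{s',t'}=\operatorname{H}^{s'}_{\mathfrak{a}}(\operatorname{Tor}^{R}_{t'}(N,M),X)$. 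Chaining the two isomorphisms through $H^{n}$ finishes the proof.

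The genuinely delicate part is purely the index bookkeeping rather than any conceptual obstacle: one must verify that, under the substitutions $p=n+1-i$ and $p=n-1-i$, the targets and sources of \emph{all} higher differentials at the distinguished spot land precisely on the entries annihilated by (i)--(ii) (respectively (iv)--(v)), and that the diagonal vanishing in (iii) (respectively (vi)) accounts for every remaining graded piece of $H^{n}$. I expect the only thing to emphasize is the recognition that the conditions on the $(n+1)$- and $(n-1)$-diagonals are exactly what is needed to kill, respectively, the outgoing and incoming differentials, so that the distinguished $E_{2}$ entry persists as an $E_{\infty}$ entry.
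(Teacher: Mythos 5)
Your proposal is correct and follows essentially the same route as the paper's proof: hypotheses (i)--(ii) (resp.\ (iv)--(v)) annihilate the sources and targets of all differentials at the spot $(s,t)$ (resp.\ $(s',t')$), so that entry survives to $E_\infty$, while (iii) (resp.\ (vi)) kills every other graded piece of the filtration of $H^{n}$, identifying both surviving entries with $H^{n}$. The only cosmetic difference is that the paper phrases the collapse via the cycle and boundary submodules ${}^{II}Z_{r}^{s,t}$ and ${}^{II}B_{r}^{s,t}$ rather than directly via vanishing differentials, which is the same argument.
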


\begin{proof}
By Lemma \ref{2-1}(ii), there is the third quadrant spectral sequence
$${}^{II}E_{2}^{p, q}:= \operatorname{Ext}^{p}_{R}(N, \operatorname{H}^{q}_{\mathfrak{a}}(M, X))\underset{p}\Longrightarrow H^{p+ q}.$$
For all $r\geq 2$, let ${}^{II}Z_{r}^{s, t}= \operatorname{Ker}({}^{II}E_{r}^{s, t}\longrightarrow {}^{II}E_{r}^{s+r, t+1-r})$ and ${}^{II}B_{r}^{s, t}= \operatorname{Im}({}^{II}E_{r}^{s-r, t-1+r}\longrightarrow {}^{II}E_{r}^{s, t})$. Since, by assumptions (i) and (ii), ${}^{II}E_{2}^{s+ r, t+ 1- r}= 0= {{}^{II}E_{2}^{s- r, t- 1+ r}}$, ${}^{II}E_{r}^{s+r, t+1-r}= 0= {}^{II}E_{r}^{s-r, t-1+r}$. Therefore ${}^{II}Z_{r}^{s, t}= {}^{II}E_{r}^{s, t}$ and ${}^{II}B_{r}^{s, t}= 0$. Thus ${}^{II}E_{r+ 1}^{s, t}= {}^{II}E_{r}^{s ,t}$ and so
$${}^{II}E_{\infty}^{s, t}= \cdots= {}^{II}E_{n+ 2}^{s ,t}= {}^{II}E_{n+ 1}^{s ,t}= \cdots= {}^{II}E_{3}^{s ,t}= {}^{II}E_{2}^{s ,t}= \operatorname{Ext}^{s}_{R}(N, \operatorname{H}^{t}_{\mathfrak{a}}(M, X)).$$
There exists a finite filtration
$$0= \psi^{n+1}H^{n}\subseteq \psi^{n}H^{n}\subseteq \cdots \subseteq \psi^{1}H^{n}\subseteq \psi^{0}H^{n}= H^{n}$$
such that ${}^{II}E_{\infty}^{n- r, r}\cong \psi^{n- r}H^{n}/\psi^{n- r+ 1}H^{n}$ for all $r\leq n$. Note that for each $r\neq t$, ${}^{II}E^{n- r, r}_{\infty}= 0$ by assumption (iii). Hence we get
$$0= \psi^{n+ 1}H^{n}= \psi^{n}H^{n}= \cdots= \psi^{s+ 2}H^{n}= \psi^{s+ 1}H^{n}$$
and
$$\psi^{s}H^{n}= \psi^{s- 1}H^{n}= \cdots= \psi^{1}H^{n}= \psi^{0}H^{n}= H^{n}.$$
Thus ${}^{II}E^{s, t}_{\infty}\cong \psi^{s}H^{n}/\psi^{s+ 1}H^{n}= H^{n}$ and so $\operatorname{Ext}^{s}_{R}(N, \operatorname{H}^{t}_{\mathfrak{a}}(M, X))\cong H^{n}$.

\iftrue
On the other hand, by Lemma \ref{2-1}(i) and assumptions (iv), (v), and (vi), a similar argument shows that $H^{n}\cong \operatorname{H}^{s'}_{\mathfrak{a}}(\operatorname{Tor}^R_{t'}(N, M), X)$.
\fi
Therefore $\operatorname{Ext}^{s}_{R}(N, \operatorname{H}^{t}_{\mathfrak{a}}(M, X))\cong \operatorname{H}^{s'}_{\mathfrak{a}}(\operatorname{Tor}^R_{t'}(N, M), X)$ as we desired.
\end{proof}


The following corollary is an immediate application of the above theorem and generalizes \cite[Theorem 2.21]{VA}.

\begin{cor}\label{2-3}
Suppose that $X$ is an arbitrary $R$--module, $M, N$ are finite $R$--modules, and $s, t$ are non-negative integers. Assume also that
\begin{enumerate}
  \item[\emph{(i)}] $\operatorname{Ext}^{s+ t+ 1- i}_{R}(N, \operatorname{H}^{i}_{\mathfrak{a}}(M, X))= 0$ for all $i< t$,
  \item[\emph{(ii)}] $\operatorname{Ext}^{s+ t- 1- i}_{R}(N, \operatorname{H}^{i}_{\mathfrak{a}}(M, X))= 0$ for all $i> t$,
  \item[\emph{(iii)}]  $\operatorname{Ext}^{s+ t- i}_{R}(N, \operatorname{H}^{i}_{\mathfrak{a}}(M, X))= 0$ for all $i\neq t$,
  \item[\emph{(iv)}]   $\operatorname{H}^{s+ t- 1- i}_{\mathfrak{a}}(\operatorname{Tor}^{R}_{i}(N, M), X)= 0$ for all $i> 0$, and
  \item[\emph{(v)}]  $\operatorname{H}^{s+ t- i}_{\mathfrak{a}}(\operatorname{Tor}^{R}_{i}(N, M), X)= 0$ for all $i> 0$.
\end{enumerate}
Then $\operatorname{Ext}^{s}_{R}(N, \operatorname{H}^{t}_{\mathfrak{a}}(M, X))\cong \operatorname{H}^{s+ t}_{\mathfrak{a}}(N\otimes_R M, X)$.
\end{cor}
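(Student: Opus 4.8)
The plan is to derive this directly from Theorem \ref{2-2} by making a judicious choice of the free parameters $s'$ and $t'$. The conclusion we want involves $N\otimes_R M$, and the key observation is the identification $N\otimes_R M\cong \operatorname{Tor}^R_0(N, M)$. So I would specialize the theorem to the case $t'= 0$ and $s'= s+ t$. This choice is admissible because it satisfies the balancing constraint $s'+ t'= s+ t= n$ required in Theorem \ref{2-2}, and it makes the target module $\operatorname{H}^{s'}_\mathfrak{a}(\operatorname{Tor}^R_{t'}(N, M), X)$ coincide with $\operatorname{H}^{s+ t}_\mathfrak{a}(N\otimes_R M, X)$.

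With this substitution I would then check that the six hypotheses of Theorem \ref{2-2} collapse to the five hypotheses of the corollary. Since $n= s+ t$, conditions (i), (ii), and (iii) of the theorem become verbatim conditions (i), (ii), and (iii) stated here. For the $\operatorname{Tor}$-side conditions, setting $t'= 0$ is what does the work: hypothesis (iv) of the theorem quantifies over $i< t'= 0$, and since $\operatorname{Tor}^R_i(N, M)$ vanishes for negative $i$ this clause is vacuous and may be dropped. Hypotheses (v) and (vi) of the theorem, now quantified over $i> 0$ and over $i\neq 0$ respectively, reduce—after noting that $\operatorname{Tor}^R_i(N, M)= 0$ forces the vanishing of the relevant local cohomology also for $i< 0$—to exactly conditions (iv) and (v) of the corollary.

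Having matched all the hypotheses, Theorem \ref{2-2} yields $\operatorname{Ext}^s_R(N, \operatorname{H}^t_\mathfrak{a}(M, X))\cong \operatorname{H}^{s+ t}_\mathfrak{a}(\operatorname{Tor}^R_0(N, M), X)$, and the final identification $\operatorname{Tor}^R_0(N, M)\cong N\otimes_R M$ completes the argument. There is essentially no obstacle here, since the whole content is bookkeeping of indices; the only point requiring the slightest care is confirming that the specialization $t'= 0$ genuinely converts the two-sided vanishing conditions (iv)--(vi) of the theorem into the one-sided conditions (iv)--(v) of the corollary, which hinges precisely on the triviality of negatively indexed $\operatorname{Tor}$.
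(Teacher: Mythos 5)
Your proposal is correct and is exactly the paper's own proof, which simply takes $s'= s+ t$ and $t'= 0$ in Theorem \ref{2-2}; your additional bookkeeping (the vacuity of condition (iv) for $i< 0$ and the identification of $i\neq 0$ with $i> 0$ for Tor indices) is precisely the implicit content of that one-line specialization.
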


\begin{proof}
Take $s'= s+ t$ and $t'= 0$ in Theorem \ref{2-2}.
\end{proof}


The next result provides an isomorphism between the $R$--modules $\operatorname{Ext}^{s}_{R}(N, \operatorname{H}^{t}_{\mathfrak{a}}(M, X))$ and $\operatorname{Ext}^{s'}_{R}(\operatorname{Tor}^R_{t'}(N, M), X)$.

\begin{cor}\label{2-4}
Suppose that $X$ is an arbitrary $R$--module, $M, N$ are finite $R$--modules such that $\operatorname{Supp}_R(N)\cap \operatorname{Supp}_R(M)\cap \operatorname{Supp}_R(X)\subseteq \operatorname{Var}(\mathfrak{a})$, and $s, t, s', t'$ are non-negative integers such that $(n=) s+ t= s'+ t'$. Assume also that
\begin{enumerate}
  \item[\emph{(i)}] $\operatorname{Ext}^{n+ 1- i}_{R}(N, \operatorname{H}^{i}_{\mathfrak{a}}(M, X))= 0$ for all $i< t$,
  \item[\emph{(ii)}]  $\operatorname{Ext}^{n- 1- i}_{R}(N, \operatorname{H}^{i}_{\mathfrak{a}}(M, X))= 0$ for all $i> t$,
  \item[\emph{(iii)}]  $\operatorname{Ext}^{n- i}_{R}(N, \operatorname{H}^{i}_{\mathfrak{a}}(M, X))= 0$ for all $i\neq t$,
  \item[\emph{(iv)}]   $\operatorname{Ext}^{n+ 1- i}_{R}(\operatorname{Tor}^{R}_{i}(N, M), X)= 0$ for all $i< t'$,
  \item[\emph{(v)}]  $\operatorname{Ext}^{n- 1- i}_{R}(\operatorname{Tor}^{R}_{i}(N, M), X)= 0$ for all $i> t'$, and
  \item[\emph{(vi)}] $\operatorname{Ext}^{n- i}_{R}(\operatorname{Tor}^{R}_{i}(N, M), X)= 0$ for all $i\neq t'$.
\end{enumerate}
Then $\operatorname{Ext}^{s}_{R}(N, \operatorname{H}^{t}_{\mathfrak{a}}(M, X))\cong \operatorname{Ext}^{s'}_{R}(\operatorname{Tor}^R_{t'}(N, M), X)$.
\end{cor}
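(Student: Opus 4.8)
The plan is to deduce the statement from Theorem \ref{2-2} by replacing the generalized local cohomology modules $\operatorname{H}^{j}_{\mathfrak{a}}(\operatorname{Tor}^{R}_{i}(N, M), X)$ with the extension modules $\operatorname{Ext}^{j}_{R}(\operatorname{Tor}^{R}_{i}(N, M), X)$ throughout. Concretely, I would first isolate the following identification: if $L$ is a finite $R$--module with $\operatorname{Supp}_R(L)\cap \operatorname{Supp}_R(X)\subseteq \operatorname{Var}(\mathfrak{a})$, then $\operatorname{H}^{j}_{\mathfrak{a}}(L, X)\cong \operatorname{Ext}^{j}_{R}(L, X)$ for all $j$. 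This is applicable to every $L= \operatorname{Tor}^{R}_{i}(N, M)$, since $\operatorname{Supp}_R(\operatorname{Tor}^{R}_{i}(N, M))\subseteq \operatorname{Supp}_R(N)\cap \operatorname{Supp}_R(M)$ gives $\operatorname{Supp}_R(\operatorname{Tor}^{R}_{i}(N, M))\cap \operatorname{Supp}_R(X)\subseteq \operatorname{Supp}_R(N)\cap \operatorname{Supp}_R(M)\cap \operatorname{Supp}_R(X)\subseteq \operatorname{Var}(\mathfrak{a})$.

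Granting this identification, the assembly is immediate. Hypotheses (i), (ii), (iii) of the present corollary are literally those of Theorem \ref{2-2}, and the identification turns hypotheses (iv), (v), (vi) (stated for $\operatorname{Ext}$) into hypotheses (iv), (v), (vi) of Theorem \ref{2-2} (stated for $\operatorname{H}_{\mathfrak{a}}$), degree by degree. Thus Theorem \ref{2-2} applies and yields $\operatorname{Ext}^{s}_{R}(N, \operatorname{H}^{t}_{\mathfrak{a}}(M, X))\cong \operatorname{H}^{s'}_{\mathfrak{a}}(\operatorname{Tor}^R_{t'}(N, M), X)$. A final use of the identification with $L= \operatorname{Tor}^{R}_{t'}(N, M)$ in degree $s'$ replaces the right-hand side by $\operatorname{Ext}^{s'}_{R}(\operatorname{Tor}^R_{t'}(N, M), X)$, which is the desired conclusion.

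To prove the key identification I would fix a minimal injective resolution $E^{\bullet}$ of $X$. Since $\Gamma_{\mathfrak{a}}(E^{i})$ is a direct summand of the injective module $E^{i}$, the short exact sequence of complexes $0\to \Gamma_{\mathfrak{a}}(E^{\bullet})\to E^{\bullet}\to E^{\bullet}/\Gamma_{\mathfrak{a}}(E^{\bullet})\to 0$ splits in each degree, so applying $\operatorname{Hom}_R(L, -)$ preserves exactness and produces a long exact cohomology sequence relating $\operatorname{H}^{j}_{\mathfrak{a}}(L, X)= \operatorname{H}^{j}(\operatorname{Hom}_R(L, \Gamma_{\mathfrak{a}}(E^{\bullet})))$, $\operatorname{Ext}^{j}_{R}(L, X)= \operatorname{H}^{j}(\operatorname{Hom}_R(L, E^{\bullet}))$, and the cohomology of $\operatorname{Hom}_R(L, E^{\bullet}/\Gamma_{\mathfrak{a}}(E^{\bullet}))$. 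It then suffices to show $\operatorname{Hom}_R(L, E^{i}/\Gamma_{\mathfrak{a}}(E^{i}))= 0$ for every $i$. Now $E^{i}/\Gamma_{\mathfrak{a}}(E^{i})$ is a direct sum of copies of $E(R/\mathfrak{p})$ with $\mathfrak{p}\notin \operatorname{Var}(\mathfrak{a})$, and, the resolution being minimal, only primes with $\mathfrak{p}\in \operatorname{Supp}_R(X)$ occur. For such $\mathfrak{p}$ we have $\mathfrak{p}\in \operatorname{Supp}_R(X)\setminus \operatorname{Var}(\mathfrak{a})$, whence $\mathfrak{p}\notin \operatorname{Supp}_R(L)$ by the support hypothesis; therefore $L_{\mathfrak{p}}= 0$ and $\operatorname{Hom}_R(L, E(R/\mathfrak{p}))\cong \operatorname{Hom}_{R_{\mathfrak{p}}}(L_{\mathfrak{p}}, E(R/\mathfrak{p}))= 0$. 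As $L$ is finite, $\operatorname{Hom}_R(L, -)$ commutes with the direct sum, giving the required vanishing and hence the identification.

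The main obstacle is exactly this key identification, and within it the need to work with a \emph{minimal} injective resolution: the Bass number vanishing $\mu_{i}(\mathfrak{p}, X)= 0$ for $\mathfrak{p}\notin \operatorname{Supp}_R(X)$ is what guarantees that every indecomposable summand $E(R/\mathfrak{p})$ of the non-torsion part $E^{\bullet}/\Gamma_{\mathfrak{a}}(E^{\bullet})$ has $\mathfrak{p}\in \operatorname{Supp}_R(X)$, and only then does the support condition force $\operatorname{Hom}_R(L, -)$ of that part to vanish. Once this is in place, everything else is the routine matching of hypotheses and two applications of the identification around Theorem \ref{2-2}.
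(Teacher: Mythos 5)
Your proposal is correct and takes essentially the same route as the paper: the paper's entire proof is ``It follows from \cite[Lemma 2.5(c)]{VA} and Theorem \ref{2-2}'', and that cited lemma is precisely your key identification $\operatorname{H}^{j}_{\mathfrak{a}}(L, X)\cong \operatorname{Ext}^{j}_{R}(L, X)$ for finite $L$ with $\operatorname{Supp}_R(L)\cap \operatorname{Supp}_R(X)\subseteq \operatorname{Var}(\mathfrak{a})$, applied to $L= \operatorname{Tor}^{R}_{i}(N, M)$. The only difference is that you prove this identification from scratch (correctly, via the minimal injective resolution and the vanishing $\operatorname{Hom}_R(L, E(R/\mathfrak{p}))= 0$ for $\mathfrak{p}\in \operatorname{Supp}_R(X)\setminus \operatorname{Var}(\mathfrak{a})$), whereas the paper treats it as a known black box.
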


\begin{proof}
It follows from \cite[Lemma 2.5(c)]{VA} and Theorem \ref{2-2}.
\end{proof}


The next corollary generalizes \cite[Theorem 3.5]{ATV}.

\begin{cor}\label{2-5}
Suppose that $X$ is an arbitrary $R$--module, $M, N$ are finite $R$--modules such that $\operatorname{Supp}_R(N)\cap \operatorname{Supp}_R(M)\cap \operatorname{Supp}_R(X)\subseteq \operatorname{Var}(\mathfrak{a})$, and $s, t$ are non-negative integers. Assume also that
\begin{enumerate}
  \item[\emph{(i)}]   $\operatorname{Ext}^{s+ t+ 1- i}_{R}(N, \operatorname{H}^{i}_{\mathfrak{a}}(M, X))= 0$ for all $i< t$,
  \item[\emph{(ii)}] $\operatorname{Ext}^{s+ t- 1- i}_{R}(N, \operatorname{H}^{i}_{\mathfrak{a}}(M, X))= 0$ for all $i> t$,
  \item[\emph{(iii)}]  $\operatorname{Ext}^{s+ t- i}_{R}(N, \operatorname{H}^{i}_{\mathfrak{a}}(M, X))= 0$ for all $i\neq t$,
  \item[\emph{(iv)}]  $\operatorname{Ext}^{s+ t- 1- i}_{R}(\operatorname{Tor}^{R}_{i}(N, M), X)= 0$ for all $i> 0$, and
  \item[\emph{(v)}]  $\operatorname{Ext}^{s+ t- i}_{R}(\operatorname{Tor}^{R}_{i}(N, M), X)= 0$ for all $i> 0$.
\end{enumerate}
Then $\operatorname{Ext}^{s}_{R}(N, \operatorname{H}^{t}_{\mathfrak{a}}(M, X))\cong \operatorname{Ext}^{s+ t}_{R}(N\otimes_R M, X)$.
\end{cor}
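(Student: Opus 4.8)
The plan is to obtain this corollary as the $t'= 0$ specialization of Corollary~\ref{2-4}, mirroring exactly the way Corollary~\ref{2-3} was deduced from Theorem~\ref{2-2}. First I would set $s'= s+ t$ and $t'= 0$, so that $n= s+ t= s'+ t'$ and $\operatorname{Tor}^R_{t'}(N, M)= \operatorname{Tor}^R_{0}(N, M)= N\otimes_R M$. Under this substitution the target isomorphism of Corollary~\ref{2-4}, namely $\operatorname{Ext}^{s}_{R}(N, \operatorname{H}^{t}_{\mathfrak{a}}(M, X))\cong \operatorname{Ext}^{s'}_{R}(\operatorname{Tor}^R_{t'}(N, M), X)$, becomes precisely $\operatorname{Ext}^{s}_{R}(N, \operatorname{H}^{t}_{\mathfrak{a}}(M, X))\cong \operatorname{Ext}^{s+ t}_{R}(N\otimes_R M, X)$, which is the desired conclusion.

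The remaining work is to verify that the hypotheses of the present corollary supply exactly what the specialized hypotheses of Corollary~\ref{2-4} demand. The support condition $\operatorname{Supp}_R(N)\cap \operatorname{Supp}_R(M)\cap \operatorname{Supp}_R(X)\subseteq \operatorname{Var}(\mathfrak{a})$ is identical in both statements, so it transfers without change. Conditions (i), (ii), and (iii) of Corollary~\ref{2-4} already read with $n= s+ t$, so they coincide verbatim with hypotheses (i), (ii), (iii) here. On the $\operatorname{Tor}$ side, I would note that condition (iv) of Corollary~\ref{2-4} requires vanishing for all $i< t'= 0$ and is therefore vacuous; condition (v) requires vanishing for all $i> t'= 0$, which is exactly hypothesis (iv) here after writing $n= s+ t$; and condition (vi), requiring vanishing for all $i\neq t'= 0$, reduces, since $i$ ranges over non-negative integers, to vanishing for all $i> 0$, which is exactly hypothesis (v) here.

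Since every hypothesis of Corollary~\ref{2-4} is met under this specialization, the conclusion follows at once by invoking that corollary. I expect no genuine obstacle: the only step requiring a moment of care is the bookkeeping around the vacuous lower range in condition (iv) and the collapse of condition (vi) of Corollary~\ref{2-4} onto hypothesis (v) here, both of which hinge on the elementary fact that negative $\operatorname{Tor}$-indices do not occur.
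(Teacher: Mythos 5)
Your proposal is correct and is precisely the paper's own proof: the authors deduce Corollary~\ref{2-5} by putting $s'=s+t$ and $t'=0$ in Corollary~\ref{2-4}, exactly as you do. Your additional bookkeeping (the vacuousness of condition (iv) of Corollary~\ref{2-4} when $t'=0$ and the collapse of its condition (vi) onto hypothesis (v)) is a correct, if more explicit, account of the same specialization.
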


\begin{proof}
Put $s'= s+ t$ and $t'= 0$ in Corollary \ref{2-4}.
\end{proof}


\begin{cor}\label{2-6}
\emph{(cf. \cite[Corollary 3.6]{ATV})}
Suppose that $X$ is an arbitrary $R$--module, $M, N$ are finite $R$--modules such that $\operatorname{Supp}_R(N)\cap \operatorname{Supp}_R(M)\cap \operatorname{Supp}_R(X)\subseteq \operatorname{Var}(\mathfrak{a})$, and $t$ is a non-negative integer. Assume also that
\begin{enumerate}
  \item[\emph{(i)}]   $\operatorname{Ext}^{t+ 1- i}_{R}(N, \operatorname{H}^{i}_{\mathfrak{a}}(M, X))= 0$ for all $i< t$,
  \item[\emph{(ii)}]  $\operatorname{Ext}^{t- i}_{R}(N, \operatorname{H}^{i}_{\mathfrak{a}}(M, X))= 0$ for all $i< t$,
  \item[\emph{(iii)}]  $\operatorname{Ext}^{t- 1- i}_{R}(\operatorname{Tor}^{R}_{i}(N, M), X)= 0$ for all $i> 0$, and
  \item[\emph{(iv)}]  $\operatorname{Ext}^{t- i}_{R}(\operatorname{Tor}^{R}_{i}(N, M), X)= 0$ for all $i> 0$.
\end{enumerate}
Then $\operatorname{Hom}_{R}(N, \operatorname{H}^{t}_{\mathfrak{a}}(M, X))\cong \operatorname{Ext}^{t}_{R}(N\otimes_R M, X)$.
\end{cor}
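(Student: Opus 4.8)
The plan is to obtain this as the special case $s = 0$ of Corollary \ref{2-5}. With $s = 0$ the conclusion of that corollary reads $\operatorname{Hom}_R(N, \operatorname{H}^t_\mathfrak{a}(M, X)) \cong \operatorname{Ext}^t_R(N \otimes_R M, X)$, which is exactly the isomorphism we want. Since the standing support hypothesis $\operatorname{Supp}_R(N) \cap \operatorname{Supp}_R(M) \cap \operatorname{Supp}_R(X) \subseteq \operatorname{Var}(\mathfrak{a})$ is shared by both statements, the whole task reduces to checking that hypotheses (i)--(iv) here imply hypotheses (i)--(v) of Corollary \ref{2-5} evaluated at $s = 0$.

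First I would match up the conditions that transfer verbatim. Putting $s = 0$, condition (i) of Corollary \ref{2-5} becomes the vanishing of $\operatorname{Ext}^{t+1-i}_R(N, \operatorname{H}^i_\mathfrak{a}(M, X))$ for all $i < t$, which is precisely (i) here; similarly conditions (iv) and (v) become the vanishing of $\operatorname{Ext}^{t-1-i}_R(\operatorname{Tor}^R_i(N, M), X)$ and of $\operatorname{Ext}^{t-i}_R(\operatorname{Tor}^R_i(N, M), X)$ for all $i > 0$, which are exactly (iii) and (iv) here. So three of the five conditions of Corollary \ref{2-5} translate directly into the hypotheses of the present statement.

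The remaining two conditions, namely (ii) and (iii) of Corollary \ref{2-5}, involve indices $i > t$ that are not mentioned in the present hypotheses, and here I would invoke the trivial observation that $\operatorname{Ext}$ in a negative cohomological degree vanishes. Indeed, for $i > t$ the superscripts $t - 1 - i$ appearing in (ii) and $t - i$ appearing in (iii) are strictly negative, so those $\operatorname{Ext}$ modules are automatically zero and no assumption is needed there. What is left of (iii) is the range $i < t$, where it reads $\operatorname{Ext}^{t-i}_R(N, \operatorname{H}^i_\mathfrak{a}(M, X)) = 0$; this is exactly hypothesis (ii) here. Hence all five hypotheses of Corollary \ref{2-5} at $s = 0$ are in force, and Corollary \ref{2-5} delivers the claimed isomorphism.

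I do not expect any genuine obstacle: once the negative-degree vanishing is noted, the result is immediate from Corollary \ref{2-5}. The only point requiring care is the index bookkeeping, in particular keeping the two families of vanishing conditions separate, those on $\operatorname{Ext}_R(N, \operatorname{H}^i_\mathfrak{a}(M, X))$ and those on $\operatorname{Ext}_R(\operatorname{Tor}^R_i(N, M), X)$, while specializing $s$ to $0$ and absorbing the superfluous $i > t$ cases into the automatic vanishing of $\operatorname{Ext}$ in negative degrees.
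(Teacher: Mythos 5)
Your proposal is correct and is exactly the paper's proof: the authors also obtain Corollary \ref{2-6} by applying Corollary \ref{2-5} with $s = 0$. Your additional bookkeeping --- noting that the $i > t$ parts of conditions (ii) and (iii) of Corollary \ref{2-5} hold automatically because $\operatorname{Ext}$ vanishes in negative degrees, and that the $i < t$ part of (iii) is your hypothesis (ii) --- correctly fills in the details the paper leaves implicit.
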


\begin{proof}
Apply Corollary \ref{2-5} with $s= 0$.
\end{proof}


Recall that a Serre subcategory $\mathcal{S}$ of the category of $R$--modules is a subclass of $R$--modules such that for any short exact sequence
\begin{equation}\label{2-7-1}
0\longrightarrow X'\longrightarrow X\longrightarrow X''\longrightarrow 0,
\end{equation}
the module $X$ is in $\mathcal{S}$ if and only if $X'$ and $X''$ are in $\mathcal{S}$. Let $\lambda: \mathcal{S}\longrightarrow \mathcal{T}$ be a function from a Serre subcategory of the category of $R$--modules $\mathcal{S}$ to a partially ordered Abelian monoid  $(\mathcal{T}, \displaystyle\circ, \preceq)$. We say that $\lambda$ is a subadditive function if $\lambda(0)= 0$ and for any short exact sequence \eqref{2-7-1} in which all the terms belong to $\mathcal{S}$, $\lambda(X')\preceq \lambda(X)$, $\lambda(X'')\preceq \lambda(X)$, and $\lambda(X)\preceq \lambda(X')\displaystyle\circ \lambda(X'')$ \cite[Definition 2.3]{VA}.

In this paper, $\mathcal{S}$ is a Serre subcategory of the category of $R$--modules, $(\mathcal{T}, \displaystyle\circ, \preceq)$ is a partially ordered Abelian monoid, and $\lambda: \mathcal{S}\longrightarrow \mathcal{T}$ is a subadditive function.

\begin{lem}\label{2-7}
\emph{(cf. \cite[Corollary 2.5]{Mel} and \cite[Proposition 3.4(i)]{HV})}
Let $N$ be a finite $R$--module, $X$ an arbitrary $R$--module, and $t$ a non-negative integer such that $\operatorname{Ext}^{i}_{R}(N, X)\in \mathcal S$ for all $i\leq t$ $($for all $i)$. Then $\operatorname{Ext}^i_{R}(L, X)\in \mathcal S$ for each finite $R$--module $L$ with $\operatorname{Supp}_R(L)\subseteq \operatorname{Supp}_R(N)$ and for all $i\leq t$ $($for all $i)$.
\end{lem}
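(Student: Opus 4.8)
The plan is to reduce the statement, via Gruson's theorem, to the case in which $L$ is a homomorphic image of a finite direct sum of copies of $N$, and then to break the resulting circularity by an induction on $t$. I will treat the bounded version ($i\leq t$) in detail; the unbounded version (for all $i$) then follows simply by letting $t$ range over all non-negative integers.

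First I would record that, since $\mathcal S$ is closed under extensions, it is closed under finite direct sums, so $\operatorname{Ext}^i_R(N^n,X)\cong\operatorname{Ext}^i_R(N,X)^{\oplus n}\in\mathcal S$ for all $i\leq t$. Next I would invoke Gruson's theorem: as $L$ and $N$ are finite with $\operatorname{Supp}_R(L)\subseteq\operatorname{Supp}_R(N)$, there is a finite filtration $0=L_0\subseteq L_1\subseteq\cdots\subseteq L_k=L$ whose successive factors $C_j:=L_j/L_{j-1}$ are each homomorphic images of some $N^{n_j}$. Applying $\operatorname{Ext}_R(-,X)$ to the short exact sequence $0\to L_{j-1}\to L_j\to C_j\to 0$ gives, in each degree $i$, the segment $\operatorname{Ext}^i_R(C_j,X)\to\operatorname{Ext}^i_R(L_j,X)\to\operatorname{Ext}^i_R(L_{j-1},X)$; here the image of the first map is a quotient of $\operatorname{Ext}^i_R(C_j,X)$ and the image of the second embeds into $\operatorname{Ext}^i_R(L_{j-1},X)$, so closure of $\mathcal S$ under submodules, quotients, and extensions lets an induction on $j$ (with $L_0=0$) reduce the whole statement to the claim that $\operatorname{Ext}^i_R(Q,X)\in\mathcal S$ for all $i\leq t$ whenever $Q$ is a homomorphic image of some $N^n$.

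For this quotient case I would fix a presentation $0\to K\to N^n\to Q\to 0$, noting that $K$ is finite (a submodule of the finite module $N^n$) with $\operatorname{Supp}_R(K)\subseteq\operatorname{Supp}_R(N)$, and argue by induction on $t$. When $t=0$, left-exactness of $\operatorname{Hom}_R(-,X)$ exhibits $\operatorname{Hom}_R(Q,X)$ as a submodule of $\operatorname{Hom}_R(N^n,X)\in\mathcal S$, whence it lies in $\mathcal S$. For the inductive step I would use the segment $\operatorname{Ext}^{t-1}_R(K,X)\to\operatorname{Ext}^t_R(Q,X)\to\operatorname{Ext}^t_R(N^n,X)$: the kernel of the right-hand map is a quotient of $\operatorname{Ext}^{t-1}_R(K,X)$, the remaining image embeds into $\operatorname{Ext}^t_R(N^n,X)\in\mathcal S$, and $\operatorname{Ext}^t_R(Q,X)$ is an extension of the second by the first. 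The delicate point, and the reason a one-step argument cannot close, is that $K$ is again a finite module supported in $\operatorname{Supp}_R(N)$, so $\operatorname{Ext}^{t-1}_R(K,X)$ is itself an instance of the very statement being proved, merely one degree lower; the induction on $t$ is exactly what supplies it, since the hypothesis $\operatorname{Ext}^i_R(N,X)\in\mathcal S$ for $i\leq t$ in particular holds for $i\leq t-1$, so the inductive hypothesis applied to the finite module $K$ yields $\operatorname{Ext}^{t-1}_R(K,X)\in\mathcal S$. With that term in hand, the three closure properties of $\mathcal S$ complete the inductive step, hence the quotient case, and hence—by the Gruson reduction—the full statement. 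The main obstacle is therefore not computational but structural: isolating and dissolving this circularity through the degree-lowering induction.
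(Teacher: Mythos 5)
Your proof is correct and takes essentially the same approach as the paper's: Gruson's theorem to filter $L$ by submodules whose factors are quotients of finite direct sums of copies of $N$, the two families of long exact sequences in $\operatorname{Ext}$, and a degree-lowering induction on $t$ applied to the kernel of the presentation (which is again finite with support contained in $\operatorname{Supp}_R(N)$). The only difference is organizational --- you reduce to the quotient case first and then induct on $t$, whereas the paper runs the induction on $t$ outermost with the filtration argument inside --- but the content is identical.
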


\begin{proof}
\iftrue
Use an induction on $t$. Note that, by Gruson's theorem \cite[Theorem 4.1]{Vas} and for a finite $R$--module $L$, there is a finite filtration
$$0= L_0\subset L_1\subset \cdots\subset L_{n-1}\subset L_n= L$$
of submodules of $L$ such that for all $1\leq j\leq n$, there exists a short exact sequence
$$0\longrightarrow L'_j\longrightarrow N^{\alpha_j}\longrightarrow L_j/L_{j- 1}\longrightarrow 0,$$
where $\alpha_j$ is an integer.
\fi
\end{proof}


In \cite[Theorem 2.10(b)]{VA}, it is shown that $\operatorname{Hom}_{R}(R/\mathfrak{a}, \operatorname{H}^{t}_{\mathfrak{a}}(M, X))\cong \operatorname{Ext}^{t}_{R}(M/\mathfrak{a}M, X)$ if $\operatorname{H}^{i}_{\mathfrak{a}}(X)= 0$ for all $i< t$. We improve this result in the following corollary. Note that, by \cite[Proposition 2.3]{HV} or \cite[Corollary 2.9]{VA}, $\operatorname{H}^{i}_{\mathfrak{a}}(M, X)= 0$ for all $i< t$ when $\operatorname{H}^{i}_{\mathfrak{a}}(X)= 0$ for all $i< t$.

\begin{cor}\label{2-8}
Let $X$ be an arbitrary $R$--module, $M$ a finite $R$--module, and $t$ a non-negative integer such that $\operatorname{H}^{i}_{\mathfrak{a}}(M, X)= 0$ for all $i< t$. Then, for every finite $\mathfrak{a}$-torsion $R$--module $N$, we have $\operatorname{Hom}_{R}(N, \operatorname{H}^{t}_{\mathfrak{a}}(M, X))\cong \operatorname{Ext}^{t}_{R}(N\otimes_R M, X)$. In particular, $\operatorname{Hom}_{R}(R/\mathfrak{a}, \operatorname{H}^{t}_{\mathfrak{a}}(M, X))\cong \operatorname{Ext}^{t}_{R}(M/\mathfrak{a}M, X)$ and so $\operatorname{Ass}_R(\operatorname{H}^{t}_{\mathfrak{a}}(M, X))= \operatorname{Ass}_R(\operatorname{Ext}^{t}_{R}(M/\mathfrak{a}M, X))$.
\end{cor}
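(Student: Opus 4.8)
The plan is to verify the hypotheses of Corollary \ref{2-6} for the finite $\mathfrak{a}$-torsion module $N$ and then read off the two special assertions. Since $N$ is $\mathfrak{a}$-torsion we have $\operatorname{Supp}_R(N)\subseteq \operatorname{Var}(\mathfrak{a})$, so the support hypothesis $\operatorname{Supp}_R(N)\cap \operatorname{Supp}_R(M)\cap \operatorname{Supp}_R(X)\subseteq \operatorname{Var}(\mathfrak{a})$ of Corollary \ref{2-6} is automatic, and every $\operatorname{Tor}^R_i(N, M)$ is again finite and $\mathfrak{a}$-torsion with $\operatorname{Supp}_R(\operatorname{Tor}^R_i(N, M))\subseteq \operatorname{Supp}_R(N)\cap \operatorname{Supp}_R(M)\subseteq \operatorname{Var}(\mathfrak{a})\cap \operatorname{Supp}_R(M)$. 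Hypotheses (i) and (ii) of Corollary \ref{2-6} hold trivially, since the modules there are $\operatorname{Ext}^{\bullet}_R(N, \operatorname{H}^i_\mathfrak{a}(M, X))$ with $i< t$ and $\operatorname{H}^i_\mathfrak{a}(M, X)= 0$ for all such $i$ by assumption.

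The real content is hypotheses (iii) and (iv), namely $\operatorname{Ext}^{t- 1- i}_R(\operatorname{Tor}^R_i(N, M), X)= 0$ and $\operatorname{Ext}^{t- i}_R(\operatorname{Tor}^R_i(N, M), X)= 0$ for all $i> 0$. When $i\geq 1$ both exponents are $\leq t- 1$ (and a negative exponent forces the $\operatorname{Ext}$ to vanish), so it suffices to prove the following vanishing statement, which I regard as the heart of the matter: \emph{if} $\operatorname{H}^j_\mathfrak{a}(M, X)= 0$ \emph{for all} $j< t$, \emph{then} $\operatorname{Ext}^j_R(L, X)= 0$ \emph{for all} $j< t$ \emph{and every finite} $\mathfrak{a}$-torsion $R$-module $L$ with $\operatorname{Supp}_R(L)\subseteq \operatorname{Supp}_R(M)$. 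Because $L$ is finite and $\mathfrak{a}$-torsion, $\mathfrak{a}^n L= 0$ for some $n$, so $L/\mathfrak{a}^m L= L$ for $m\geq n$ and the direct system defining $\operatorname{H}^j_\mathfrak{a}(L, X)= \varinjlim_m \operatorname{Ext}^j_R(L/\mathfrak{a}^m L, X)$ stabilizes; thus $\operatorname{Ext}^j_R(L, X)\cong \operatorname{H}^j_\mathfrak{a}(L, X)$, and it is enough to show $\operatorname{H}^j_\mathfrak{a}(L, X)= 0$ for $j< t$.

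To transfer the vanishing of $\operatorname{H}^j_\mathfrak{a}(-, X)$ from $M$ to any finite $L$ supported in $\operatorname{Supp}_R(M)$, I would argue exactly as in Lemma \ref{2-7}, replacing $\operatorname{Ext}^{\bullet}_R(-, X)$ by $\operatorname{H}^{\bullet}_\mathfrak{a}(-, X)$; this is also essentially the support-version of \cite[Corollary 2.9]{VA} and \cite[Proposition 2.3]{HV}. The key point is that $\operatorname{H}^{\bullet}_\mathfrak{a}(-, X)$ is a contravariant connected sequence in the first variable: writing $\operatorname{H}^j_\mathfrak{a}(-, X)\cong \operatorname{H}^j(\operatorname{Hom}_R(-, \Gamma_\mathfrak{a}(\operatorname{E}^{\bullet X})))$ as in the proof of Lemma \ref{2-1} and using that each $\Gamma_\mathfrak{a}(E^p)$ is injective (so $\operatorname{Hom}_R(-, \Gamma_\mathfrak{a}(E^p))$ is exact), every short exact sequence of finite modules yields a long exact sequence of the $\operatorname{H}^j_\mathfrak{a}(-, X)$. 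Gruson's theorem \cite[Theorem 4.1]{Vas} then gives a finite filtration $0= L_0\subset \cdots \subset L_n= L$ whose factors fit into short exact sequences $0\to K_j\to M^{\alpha_j}\to L_j/L_{j- 1}\to 0$ with each $K_j$ finite and supported in $\operatorname{Supp}_R(M)$; an induction on $t$ together with an inner induction along the filtration (using these sequences and $0\to L_{j- 1}\to L_j\to L_j/L_{j- 1}\to 0$, just as in Lemma \ref{2-7}) propagates $\operatorname{H}^j_\mathfrak{a}(M, X)= 0$ to $\operatorname{H}^j_\mathfrak{a}(L, X)= 0$ for $j< t$. Applying this with $L= \operatorname{Tor}^R_i(N, M)$ for $i> 0$ secures (iii) and (iv), and Corollary \ref{2-6} delivers $\operatorname{Hom}_R(N, \operatorname{H}^t_\mathfrak{a}(M, X))\cong \operatorname{Ext}^t_R(N\otimes_R M, X)$.

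For the special cases, take $N= R/\mathfrak{a}$, which is finite and $\mathfrak{a}$-torsion with $R/\mathfrak{a}\otimes_R M\cong M/\mathfrak{a}M$, giving $\operatorname{Hom}_R(R/\mathfrak{a}, \operatorname{H}^t_\mathfrak{a}(M, X))\cong \operatorname{Ext}^t_R(M/\mathfrak{a}M, X)$. Finally, $\operatorname{H}^t_\mathfrak{a}(M, X)$ is $\mathfrak{a}$-torsion, so $\operatorname{Ass}_R(\operatorname{H}^t_\mathfrak{a}(M, X))\subseteq \operatorname{Var}(\mathfrak{a})$; combining this with the standard identity $\operatorname{Ass}_R(\operatorname{Hom}_R(R/\mathfrak{a}, Y))= \operatorname{Ass}_R(Y)\cap \operatorname{Var}(\mathfrak{a})$ applied to $Y= \operatorname{H}^t_\mathfrak{a}(M, X)$ yields $\operatorname{Ass}_R(\operatorname{H}^t_\mathfrak{a}(M, X))= \operatorname{Ass}_R(\operatorname{Hom}_R(R/\mathfrak{a}, \operatorname{H}^t_\mathfrak{a}(M, X)))= \operatorname{Ass}_R(\operatorname{Ext}^t_R(M/\mathfrak{a}M, X))$. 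The main obstacle is the vanishing-transfer of the third paragraph: everything else is either automatic from the torsion hypothesis or a direct appeal to Corollary \ref{2-6}, but forcing the $\operatorname{Tor}$-terms to vanish in the required range genuinely requires moving $\operatorname{H}^j_\mathfrak{a}(M, X)= 0$ across the inclusion $\operatorname{Supp}_R(\operatorname{Tor}^R_i(N, M))\subseteq \operatorname{Supp}_R(M)$, which is precisely what the Gruson filtration accomplishes.
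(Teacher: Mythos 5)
Your proof is correct, and its endgame coincides with the paper's: both verify the hypotheses of Corollary \ref{2-6} for a finite $\mathfrak{a}$-torsion $N$ (hypotheses (i) and (ii) being vacuous since $\operatorname{H}^{i}_{\mathfrak{a}}(M, X)= 0$ for $i< t$) and then settle the $\operatorname{Ass}$ statement via \cite[Exercise 1.2.27]{BH}. Where you genuinely diverge is in how the Tor-vanishing hypotheses (iii) and (iv) are secured. The paper does this in two citations: by \cite[Corollary 2.17]{VA}, the hypothesis forces $\operatorname{Ext}^{i}_{R}(M/\mathfrak{a}M, X)= 0$ for all $i< t$, and then Lemma \ref{2-7}, applied with $\mathcal S$ the Serre class of zero modules and pivot module $M/\mathfrak{a}M$, transfers this to $\operatorname{Ext}^{j}_{R}(\operatorname{Tor}^{R}_{i}(N, M), X)= 0$ for $j\leq t- 1$, using $\operatorname{Supp}_R(\operatorname{Tor}^{R}_{i}(N, M))\subseteq \operatorname{Supp}_R(M)\cap \operatorname{Var}(\mathfrak{a})= \operatorname{Supp}_R(M/\mathfrak{a}M)$. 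You instead stay on the local-cohomology side: you prove from scratch a Gruson-filtration transfer lemma for the functors $\operatorname{H}^{j}_{\mathfrak{a}}(-, X)$ in the first variable --- legitimate, since by \cite[Lemma 2.1(i)]{DST} these are computed by $\operatorname{Hom}_R(-, \Gamma_{\mathfrak{a}}(\operatorname{E}^{\bullet X}))$ with each $\Gamma_{\mathfrak{a}}(E^{p})$ injective, so short exact sequences of finite modules do give long exact sequences --- and only afterwards convert $\operatorname{H}^{j}_{\mathfrak{a}}(L, X)$ into $\operatorname{Ext}^{j}_{R}(L, X)$ for finite $\mathfrak{a}$-torsion $L$ by the stabilization of the defining direct system. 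Both routes are sound, and your double induction (outer on $t$, inner along the Gruson filtration) goes through exactly as in Lemma \ref{2-7}. The paper's argument is shorter but imports \cite[Corollary 2.17]{VA}; yours is self-contained relative to this paper, and in fact re-proves that cited result as a byproduct (take $L= M/\mathfrak{a}M$ in your transfer lemma and apply your stabilization remark), at the cost of establishing the connected-sequence structure of $\operatorname{H}^{\bullet}_{\mathfrak{a}}(-, X)$ and repeating the filtration induction in that setting.
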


\begin{proof}
By \cite[Corollary 2.17]{VA}, we have $\operatorname{Ext}^{i}_{R}(M/\mathfrak{a}M, X)= 0$ for all $i< t$. Therefore, from Lemma \ref{2-7}, $\operatorname{Ext}^{t- 1- i}_{R}(\operatorname{Tor}^{R}_{i}(N, M), X)= 0= \operatorname{Ext}^{t- i}_{R}(\operatorname{Tor}^{R}_{i}(N, M), X)$ for all $i> 0$. Thus $\operatorname{Hom}_{R}(N, \operatorname{H}^{t}_{\mathfrak{a}}(M, X))\cong \operatorname{Ext}^{t}_{R}(N\otimes_R M, X)$ by Corollary \ref{2-6}. For the last part, note that we have
$$\operatorname{Ass}_R(\operatorname{Hom}_{R}(R/\mathfrak{a}, \operatorname{H}^{t}_{\mathfrak{a}}(M, X)))= \operatorname{Supp}_R(R/\mathfrak{a})\cap \operatorname{Ass}_{R}(\operatorname{H}^{t}_{\mathfrak{a}}(M, X))= \operatorname{Ass}_{R}(\operatorname{H}^{t}_{\mathfrak{a}}(M, X))$$
from \cite[Exercise 1.2.27]{BH}.
\end{proof}


In the next theorem which is a generalization of \cite[Theorem 2.13]{VA}, we present sufficient conditions which convince us that $\operatorname{Ext}^{s}_{R}(N, \operatorname{H}^{t}_{\mathfrak{a}}(M, X))$ is in a Serre subcategory of the category of $R$--modules. 

\begin{thm}\label{2-9}
Suppose that $X$ is an arbitrary $R$--module, $M, N$ are finite $R$--modules, and $s, t$ are non-negative integers. Assume also that
\begin{enumerate}
  \item[\emph{(i)}]  $\operatorname{H}^{s+ t- i}_{\mathfrak{a}}(\operatorname{Tor}^{R}_{i}(N, M), X)\in \mathcal S$ for all $i$,
  \item[\emph{(ii)}]   $\operatorname{Ext}^{s+ t+ 1- i}_{R}(N, \operatorname{H}^{i}_{\mathfrak{a}}(M, X))\in \mathcal S$ for all $i< t$, and
  \item[\emph{(iii)}] $\operatorname{Ext}^{s+ t- 1- i}_{R}(N, \operatorname{H}^{i}_{\mathfrak{a}}(M, X))\in \mathcal S$ for all $i> t$.
\end{enumerate}
Then $\operatorname{Ext}^{s}_{R}(N, \operatorname{H}^{t}_{\mathfrak{a}}(M, X)) \in \mathcal S$ and
\begin{flushleft}
$\quad\lambda(\operatorname{Ext}^{s}_{R}(N, \operatorname{H}^{t}_{\mathfrak{a}}(M, X)))\preceq(\underset{i= 0}{\overset{s+ t}\circ}\lambda(\operatorname{H}^{s+ t- i}_{\mathfrak{a}}(\operatorname{Tor}^{R}_{i}(N, M), X)))$
\end{flushleft}
\begin{flushright}
$\displaystyle\circ (\underset{i= 0}{\overset{t- 1}\circ}\lambda(\operatorname{Ext}^{s+ t+ 1- i}_{R}(N, \operatorname{H}^{i}_{\mathfrak{a}}(M, X))))\displaystyle\circ (\underset{i= t+ 1}{\overset{s+ t- 1}\circ}\lambda(\operatorname{Ext}^{s+ t- 1- i}_{R}(N, \operatorname{H}^{i}_{\mathfrak{a}}(M, X)))).\quad$
\end{flushright}
\end{thm}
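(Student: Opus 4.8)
The plan is to run both spectral sequences of Lemma \ref{2-1} against their common abutment $H^{n}$, where $n=s+t$. First I would exploit the sequence ${}^{I}E_{2}^{p,q}=\operatorname{H}^{p}_{\mathfrak{a}}(\operatorname{Tor}^{R}_{q}(N,M),X)$ to show $H^{n}\in\mathcal{S}$. The terms on the antidiagonal $p+q=n$ are exactly ${}^{I}E_{2}^{n-i,i}=\operatorname{H}^{n-i}_{\mathfrak{a}}(\operatorname{Tor}^{R}_{i}(N,M),X)$, all of which lie in $\mathcal{S}$ by hypothesis (i). Since each ${}^{I}E_{\infty}^{n-i,i}$ is a subquotient of the corresponding ${}^{I}E_{2}^{n-i,i}$, and third-quadrant convergence yields a finite filtration $0=\phi^{n+1}H^{n}\subseteq\cdots\subseteq\phi^{0}H^{n}=H^{n}$ with graded pieces ${}^{I}E_{\infty}^{n-i,i}$, closure of $\mathcal{S}$ under subquotients and extensions gives $H^{n}\in\mathcal{S}$. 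Applying subadditivity of $\lambda$ across this filtration together with the subquotient monotonicity $\lambda({}^{I}E_{\infty}^{n-i,i})\preceq\lambda({}^{I}E_{2}^{n-i,i})$ produces $\lambda(H^{n})\preceq(\underset{i= 0}{\overset{s+ t}\circ}\lambda(\operatorname{H}^{s+ t- i}_{\mathfrak{a}}(\operatorname{Tor}^{R}_{i}(N, M), X)))$, which is the first block of the asserted bound.

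Next I would analyze the sequence ${}^{II}E_{2}^{p,q}=\operatorname{Ext}^{p}_{R}(N,\operatorname{H}^{q}_{\mathfrak{a}}(M,X))$ at the spot $(s,t)$, whose page-two term is the module to be controlled. The outgoing differential lands in ${}^{II}E_{r}^{s+r,t+1-r}$, a subquotient of $\operatorname{Ext}^{n+1-i}_{R}(N,\operatorname{H}^{i}_{\mathfrak{a}}(M,X))$ with $i=t+1-r<t$; the incoming differential issues from ${}^{II}E_{r}^{s-r,t-1+r}$, a subquotient of $\operatorname{Ext}^{n-1-i}_{R}(N,\operatorname{H}^{i}_{\mathfrak{a}}(M,X))$ with $i=t-1+r>t$. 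Thus hypotheses (ii) and (iii) say precisely that every source and target of a differential touching the $(s,t)$ spot lies in $\mathcal{S}$. Because the sequence is third-quadrant, ${}^{II}E_{r}^{s,t}$ stabilizes to ${}^{II}E_{\infty}^{s,t}$ for $r\gg 0$, and the latter is a subquotient of $H^{n}$, hence in $\mathcal{S}$ by the first step.

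To descend from ${}^{II}E_{\infty}^{s,t}$ back to ${}^{II}E_{2}^{s,t}$, at each page $r\geq 2$ I would write $Z_{r}=\operatorname{Ker}(d_{r}^{\mathrm{out}})$ and $B_{r}=\operatorname{Im}(d_{r}^{\mathrm{in}})$, so that ${}^{II}E_{r+1}^{s,t}=Z_{r}/B_{r}$, and consider the two short exact sequences $0\to B_{r}\to Z_{r}\to {}^{II}E_{r+1}^{s,t}\to 0$ and $0\to Z_{r}\to {}^{II}E_{r}^{s,t}\to\operatorname{Im}(d_{r}^{\mathrm{out}})\to 0$. Here $B_{r}$ is a quotient of ${}^{II}E_{r}^{s-r,t-1+r}$ and $\operatorname{Im}(d_{r}^{\mathrm{out}})$ a submodule of ${}^{II}E_{r}^{s+r,t+1-r}$, both in $\mathcal{S}$; a descending induction on $r$ starting from the stable value, using closure under extensions, then gives ${}^{II}E_{2}^{s,t}=\operatorname{Ext}^{s}_{R}(N,\operatorname{H}^{t}_{\mathfrak{a}}(M,X))\in\mathcal{S}$. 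For the estimate, I would apply $\lambda$ to the same two sequences to get $\lambda({}^{II}E_{r}^{s,t})\preceq\lambda(\operatorname{Im}(d_{r}^{\mathrm{out}}))\circ\lambda(B_{r})\circ\lambda({}^{II}E_{r+1}^{s,t})$, iterate over $r$, and bound each $\lambda(\operatorname{Im}(d_{r}^{\mathrm{out}}))$ and $\lambda(B_{r})$ by the matching $\lambda(\operatorname{Ext}^{n+1-i}_{R}(N,\operatorname{H}^{i}_{\mathfrak{a}}(M,X)))$ for $i=0,\dots,t-1$ and $\lambda(\operatorname{Ext}^{n-1-i}_{R}(N,\operatorname{H}^{i}_{\mathfrak{a}}(M,X)))$ for $i=t+1,\dots,s+t-1$, again via subquotient monotonicity. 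Since $(\mathcal{T},\circ,\preceq)$ is Abelian, the collected factors reorder into the two remaining blocks, and $\lambda({}^{II}E_{\infty}^{s,t})\preceq\lambda(H^{n})$ supplies the first block.

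I expect the main obstacle to be the bookkeeping: matching the running page index $r$ to the cohomological index $i$ of hypotheses (ii) and (iii) so that the product ranges come out exactly as stated (the cut $i<t$ versus $i>t$, truncated at $i=s+t-1$ where the Ext degree drops to zero), and confirming that all differentials beyond the stabilization page vanish so that no extraneous factors enter the bound (here $\lambda(0)=0$ is the monoid identity). The spectral-sequence machinery and the closure and subadditivity properties are routine once the indices are pinned down; the delicate part is purely this index arithmetic together with the use of commutativity of $\circ$ to arrange the three blocks into the displayed order.
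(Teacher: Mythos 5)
Your proposal is correct and follows essentially the same route as the paper's proof: both use Lemma \ref{2-1}(i) with the abutment filtration to place $H^{s+t}$ in $\mathcal{S}$ and obtain the first block of the bound, then identify ${}^{II}E_{\infty}^{s,t}$ as a subquotient of $H^{s+t}$ and run the same descending induction on the page index, via the short exact sequences involving ${}^{II}Z_{r}^{s,t}$ and ${}^{II}B_{r}^{s,t}$, with hypotheses (ii) and (iii) controlling the sources and targets of the differentials at the $(s,t)$ spot. Your index bookkeeping ($i=t+1-r<t$ for targets, $i=t-1+r>t$ for sources, stabilization at page $s+t+2$) matches the paper exactly.
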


\begin{proof}
By Lemma \ref{2-1}(i), there is the third quadrant spectral sequence
$${}^{I}E_{2}^{p, q}:= \operatorname{H}^{p}_{\mathfrak{a}}(\operatorname{Tor}^{R}_{q}(N, M), X)\underset{p}\Longrightarrow H^{p+ q}.$$
For all $i\leq s+ t$, we have ${}^{I}E_{\infty}^{s+ t- i, i}= {}^{I}E_{s+ t+ 2}^{s+ t- i, i}$ because ${}^{I}E_{j}^{s+ t- i- j, i- 1+ j}= 0= {}^{I}E_{j}^{s+ t- i+ j, i+ 1- j}$ for all $j\geq s+ t+ 2$; so that ${}^{I}E_{\infty}^{s+ t- i, i}$ is in $\mathcal{S}$ and $\lambda({}^{I}E_{\infty}^{s+ t- i, i})\preceq \lambda({}^{I}E_{2}^{s+ t- i, i})$ from the fact that ${}^{I}E_{s+ t+ 2}^{s+ t- i, i}$ is a subquotient of ${}^{I}E_{2}^{s+ t- i, i}$ which is in $\mathcal{S}$ by assumption (i). There exists a finite filtration
$$0= \phi^{s+ t+ 1}H^{s+ t}\subseteq \phi^{s+ t}H^{s+ t}\subseteq \cdots \subseteq \phi^{1}H^{s+ t}\subseteq \phi^{0}H^{s+ t}= H^{s+ t}$$
such that ${}^{I}E_{\infty}^{s+ t- i, i}\cong \phi^{s+ t- i}H^{s+ t}/\phi^{s+ t- i+ 1}H^{s+ t}$ for all $i\leq s+ t$. Now the exact sequences
$$0\longrightarrow \phi^{s+ t- i+ 1}H^{s+ t}\longrightarrow \phi^{s+ t- i}H^{s+ t}\longrightarrow {}^{I}E_{\infty}^{s+ t- i, i}\longrightarrow 0,$$
for all $i\leq s+ t$, show that $H^{s+ t}$ is in $\mathcal{S}$ and
$$\lambda(H^{s+ t})\preceq \underset{i= 0}{\overset{s+ t}\circ}\lambda({}^{I}E_{\infty}^{s+ t- i, i})\preceq \underset{i= 0}{\overset{s+ t}\circ}\lambda({}^{I}E_{2}^{s+ t- i, i}).$$

On the other hand, by Lemma \ref{2-1}(ii), there is the third quadrant spectral sequence
$${}^{II}E_{2}^{p, q}:= \operatorname{Ext}^{p}_{R}(N, \operatorname{H}^{q}_{\mathfrak{a}}(M, X))\underset{p}\Longrightarrow H^{p+ q}.$$
Thus there exists a finite filtration
$$0= \psi^{s+t+1}H^{s+t}\subseteq \psi^{s+t}H^{s+t}\subseteq \cdots \subseteq \psi^{1}H^{s+t}\subseteq \psi^{0}H^{s+t}= H^{s+ t}$$
such that ${}^{II}E_{\infty}^{s+ t- i, i}\cong \psi^{s+ t- i}H^{s+ t}/\psi^{s+ t- i+ 1}H^{s+ t}$ for all $i\leq s+ t$. Since $H^{s+ t}$ is in $\mathcal{S}$, $\psi^{s}H^{s+ t}$ is in $\mathcal{S}$. Hence ${}^{II}E_{\infty}^{s, t}\cong \psi^{s}H^{s+ t}/\psi^{s+ 1}H^{s+ t}$ is in $\mathcal{S}$ and $\lambda({}^{II}E_{\infty}^{s, t})\preceq \lambda(\psi^{s}H^{s+ t})\preceq \lambda(H^{s+ t}).$ Therefore ${}^{II}E_{s+ t+ 2}^{s, t}$ is in $\mathcal{S}$ and
$$\lambda({}^{II}E_{s+ t+ 2}^{s, t})\preceq \lambda(H^{s+ t})$$
because ${}^{II}E_{s+ t+ 2}^{s, t}= {}^{II}E_{\infty}^{s, t}$ from the fact that ${}^{II}E_{j}^{s- j, t- 1+ j}= 0= {}^{II}E_{j}^{s+ j, t+ 1- j}$ for all $j\geq s+ t+ 2$. For all $r\geq 2$, let ${}^{II}Z_{r}^{s, t}= \operatorname{Ker}({}^{II}E_{r}^{s, t}\longrightarrow {}^{II}E_{r}^{s+r, t+1-r})$ and ${}^{II}B_{r}^{s, t}= \operatorname{Im}({}^{II}E_{r}^{s-r, t- 1+ r}\longrightarrow {}^{II}E_{r}^{s, t})$. We have the exact sequences
$$0\longrightarrow {}^{II}Z_{r}^{s, t}\longrightarrow {}^{II}E_{r}^{s, t}\longrightarrow {}^{II}E_{r}^{s, t}/{}^{II}Z_{r}^{s, t}\longrightarrow 0$$
and
$$0\longrightarrow {}^{II}B_{r}^{s ,t}\longrightarrow {}^{II}Z_{r}^{s, t}\longrightarrow {}^{II}E_{r+1}^{s, t}\longrightarrow 0.$$
Since ${}^{II}E_{2}^{s+ r, t+ 1- r}$ and ${}^{II}E_{2}^{s- r, t- 1+ r}$ are in $\mathcal{S}$ by assumptions (ii) and (iii), ${}^{II}E_{r}^{s+r, t+1-r}$ and ${}^{II}E_{r}^{s-r, t- 1+ r}$ are also in $\mathcal{S}$, and so ${}^{II}E_{r}^{s, t}/{}^{II}Z_{r}^{s, t}$ and ${}^{II}B_{r}^{s, t}$ are in $\mathcal{S}$. It shows that ${}^{II}E_{r}^{s, t}$ is in $\mathcal{S}$ whenever ${}^{II}E_{r+1}^{s ,t}$ is in $\mathcal{S}$ and we get
$$\begin{array}{ll}
\lambda({}^{II}E_{r}^{s, t})\!\!&\preceq \lambda({}^{II}E_{r+1}^{s, t})\displaystyle\circ \lambda({}^{II}E_{r}^{s, t}/{}^{II}Z_{r}^{s, t})\displaystyle\circ\lambda({}^{II}B_{r}^{s ,t})\\
&\preceq \lambda({}^{II}E_{r+1}^{s, t})\displaystyle\circ \lambda({}^{II}E_{r}^{s+r, t+1-r})\displaystyle\circ \lambda({}^{II}E_{r}^{s-r, t- 1+ r})\\
&\preceq \lambda({}^{II}E_{r+1}^{s, t})\displaystyle\circ \lambda({}^{II}E_{2}^{s+r, t+1-r})\displaystyle\circ \lambda({}^{II}E_{2}^{s-r, t- 1+ r}).
\end{array}$$
Therefore ${}^{II}E_{2}^{s, t}$ is in $\mathcal{S}$ and we have
$$\begin{array}{ll}
\lambda({}^{II}E_{2}^{s, t})\!\!\!
&\preceq \lambda({}^{II}E_{3}^{s, t})\displaystyle\circ \lambda({}^{II}E_{2}^{s+ 2, t- 1})\displaystyle\circ  \lambda({}^{II}E_{2}^{s- 2, t+ 1})\\
&\preceq \lambda({}^{II}E_{4}^{s, t})\displaystyle\circ \lambda({}^{II}E_{2}^{s+ 3, t- 2})\displaystyle\circ \lambda({}^{II}E_{2}^{s+ 2, t- 1})\displaystyle\circ \lambda({}^{II}E_{2}^{s- 2, t+ 1})\displaystyle\circ \lambda({}^{II}E_{2}^{s- 3, t+ 2})\\
&\preceq \cdots\\
&\preceq \lambda({}^{II}E_{s+ t+ 2}^{s, t})\circ (\underset{i= 0}{\overset{t- 1}\circ}\lambda({}^{II}E_2^{s+ t+ 1- i, i}))\circ  (\underset{i= t+ 1}{\overset{s+ t- 1}\circ}\lambda({}^{II}E_2^{s+ t- 1- i, i}))
\end{array}$$
which completes the proof.
\end{proof}


As an immediate application of the above theorem, we generalize \cite[Theorem 2.3]{ATV} in the next result.

\begin{cor}\label{2-10}
Suppose that $X$ is an arbitrary $R$--module, $M, N$ are finite $R$--modules such that $\operatorname{Supp}_R(N)\cap \operatorname{Supp}_R(M)\cap \operatorname{Supp}_R(X)\subseteq \operatorname{Var}(\mathfrak{a})$, and $s, t$ are non-negative integers. Assume also that
\begin{enumerate}
  \item[\emph{(i)}]  $\operatorname{Ext}^{s+ t- i}_{R}(\operatorname{Tor}^{R}_{i}(N, M), X)\in \mathcal S$ for all  $i$,
  \item[\emph{(ii)}]   $\operatorname{Ext}^{s+ t+ 1- i}_{R}(N, \operatorname{H}^{i}_{\mathfrak{a}}(M, X))\in \mathcal S$ for all $i< t$, and
  \item[\emph{(iii)}] $\operatorname{Ext}^{s+ t- 1- i}_{R}(N, \operatorname{H}^{i}_{\mathfrak{a}}(M, X))\in \mathcal S$ for all $i> t$.
\end{enumerate}
Then $\operatorname{Ext}^{s}_{R}(N, \operatorname{H}^{t}_{\mathfrak{a}}(M, X)) \in \mathcal S$ and
\begin{flushleft}
$\quad\lambda(\operatorname{Ext}^{s}_{R}(N, \operatorname{H}^{t}_{\mathfrak{a}}(M, X)))\preceq(\underset{i= 0}{\overset{s+ t}\circ}\lambda(\operatorname{Ext}^{s+ t- i}_{R}(\operatorname{Tor}^{R}_{i}(N, M), X)))$
\end{flushleft}
\begin{flushright}
$\displaystyle\circ (\underset{i= 0}{\overset{t- 1}\circ}\lambda(\operatorname{Ext}^{s+ t+ 1- i}_{R}(N, \operatorname{H}^{i}_{\mathfrak{a}}(M, X))))\displaystyle\circ (\underset{i= t+ 1}{\overset{s+ t- 1}\circ}\lambda(\operatorname{Ext}^{s+ t- 1- i}_{R}(N, \operatorname{H}^{i}_{\mathfrak{a}}(M, X)))).\quad$
\end{flushright}
\end{cor}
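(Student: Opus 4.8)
The plan is to deduce this directly from Theorem \ref{2-9} by trading the generalized local cohomology of the $\operatorname{Tor}$ modules for ordinary extension functors, a passage that is legitimate here precisely because of the support hypothesis. This mirrors the reduction already used in the proof of Corollary \ref{2-4}.

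First I would fix an index $i$ and observe that $\operatorname{Supp}_R(\operatorname{Tor}^{R}_{i}(N, M))\subseteq \operatorname{Supp}_R(N)\cap \operatorname{Supp}_R(M)$. Intersecting with $\operatorname{Supp}_R(X)$ and using the assumed containment $\operatorname{Supp}_R(N)\cap \operatorname{Supp}_R(M)\cap \operatorname{Supp}_R(X)\subseteq \operatorname{Var}(\mathfrak{a})$, I obtain $\operatorname{Supp}_R(\operatorname{Tor}^{R}_{i}(N, M))\cap \operatorname{Supp}_R(X)\subseteq \operatorname{Var}(\mathfrak{a})$. By \cite[Lemma 2.5(c)]{VA}, this support condition collapses the generalized local cohomology onto the extension functor, giving
$$\operatorname{H}^{s+ t- i}_{\mathfrak{a}}(\operatorname{Tor}^{R}_{i}(N, M), X)\cong \operatorname{Ext}^{s+ t- i}_{R}(\operatorname{Tor}^{R}_{i}(N, M), X)$$
for every $i$.

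With this isomorphism in hand, hypothesis (i) of the present corollary is exactly hypothesis (i) of Theorem \ref{2-9}, while hypotheses (ii) and (iii) are verbatim identical in the two statements. Hence Theorem \ref{2-9} applies and yields both $\operatorname{Ext}^{s}_{R}(N, \operatorname{H}^{t}_{\mathfrak{a}}(M, X))\in \mathcal{S}$ and the displayed $\lambda$ bound, except that the first family of terms appears as $\lambda(\operatorname{H}^{s+ t- i}_{\mathfrak{a}}(\operatorname{Tor}^{R}_{i}(N, M), X))$ rather than $\lambda(\operatorname{Ext}^{s+ t- i}_{R}(\operatorname{Tor}^{R}_{i}(N, M), X))$; the two remaining families already match. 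To finish, I would use that a subadditive $\lambda$ takes equal values on isomorphic modules: applying the definition to the split short exact sequences relating two isomorphic modules forces $\lambda$ to agree on them. Thus the isomorphism above lets me replace each H-term by the corresponding Ext-term, and the numerical bound transfers term by term to the form stated.

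There is no serious obstacle in this argument; it is a routine specialization. The only points requiring care are verifying that the support hypothesis on $N$, $M$, $X$ genuinely descends to each $\operatorname{Tor}^{R}_{i}(N, M)$ so that \cite[Lemma 2.5(c)]{VA} is applicable, and recording explicitly that subadditivity makes $\lambda$ isomorphism-invariant, which is what permits restating the bound of Theorem \ref{2-9} in terms of $\operatorname{Ext}$.
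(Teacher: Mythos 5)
Your proposal is correct and is essentially the paper's own proof: the paper simply cites \cite[Lemma 2.5(c)]{VA} together with Theorem \ref{2-9}, which is exactly the reduction you carry out. You have merely made explicit the two routine verifications the paper leaves implicit, namely that the support hypothesis descends to each $\operatorname{Tor}^{R}_{i}(N, M)$ and that a subadditive $\lambda$ is isomorphism-invariant.
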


\begin{proof}
This follows from \cite[Lemma 2.5(c)]{VA} and Theorem \ref{2-9}.
\end{proof}


Recall that, a Serre subcategory of the category of $R$--modules $\mathcal{S}$ is said to be Melkersson with respect to $\mathfrak{a}$ if for any $\mathfrak{a}$--torsion $R$--module $X$, $(0:_{X}\mathfrak{a})$ is in $\mathcal{S}$ implies that $X$ is in $\mathcal{S}$ (see \cite [Definition 2.1]{AM} and \cite [Definition 3.1]{ATV}).

\begin{cor}\label{2-11}
\emph{(cf. \cite[Corollary 2.4]{ATV} and \cite[Theorem 2.3]{VHH})}
Suppose that $X$ is an arbitrary $R$--module, $M, N$ are finite $R$--modules such that $\operatorname{Supp}_R(N)\cap \operatorname{Supp}_R(M)\cap \operatorname{Supp}_R(X)\subseteq \operatorname{Var}(\mathfrak{a})$, and $t$ is a non-negative integer. Assume also that
\begin{enumerate}
  \item[\emph{(i)}]  $\operatorname{Ext}^{t- i}_{R}(\operatorname{Tor}^{R}_{i}(N, M), X)$ is in $\mathcal{S}$ for all $i\leq t$, and
  \item[\emph{(ii)}]   $\operatorname{Ext}^{t+ 1- i}_{R}(N, \operatorname{H}^{i}_{\mathfrak{a}}(M, X))$ is in $\mathcal{S}$ for all $i< t$.
\end{enumerate}
Then $\operatorname{Hom}_{R}(N, \operatorname{H}^{t}_{\mathfrak{a}}(M, X))\in \mathcal{S}$. In particular, $\operatorname{H}^{t}_{\mathfrak{a}}(M, X)\in \mathcal{S}$ whenever $N= R/\mathfrak{a}$ and $\mathcal{S}$ is Melkersson with respect to $\mathfrak{a}$.
\end{cor}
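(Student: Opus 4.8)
The plan is to deduce the first assertion directly from Corollary~\ref{2-10} by specializing $s = 0$, and then to obtain the ``in particular'' statement from the Melkersson hypothesis. First I would set $s = 0$ in Corollary~\ref{2-10} and check that its three hypotheses specialize to (or are implied by) the hypotheses available here. With $s = 0$, hypothesis~(ii) of Corollary~\ref{2-10} reads $\operatorname{Ext}^{t+ 1- i}_{R}(N, \operatorname{H}^{i}_{\mathfrak{a}}(M, X))\in \mathcal S$ for all $i < t$, which is exactly hypothesis~(ii) of the present statement. Hypothesis~(iii) of Corollary~\ref{2-10} becomes $\operatorname{Ext}^{t- 1- i}_{R}(N, \operatorname{H}^{i}_{\mathfrak{a}}(M, X))\in \mathcal S$ for all $i > t$; here the cohomological degree $t - 1 - i$ is negative, so every such Ext module is zero and the hypothesis is satisfied automatically.

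Next I would treat hypothesis~(i) of Corollary~\ref{2-10}, which with $s = 0$ requires $\operatorname{Ext}^{t- i}_{R}(\operatorname{Tor}^{R}_{i}(N, M), X)\in \mathcal S$ for all $i$. For $i < 0$ the module $\operatorname{Tor}^{R}_{i}(N, M)$ vanishes, and for $i > t$ the degree $t - i$ is negative, so in both ranges the Ext module is zero; thus this condition reduces exactly to our hypothesis~(i), namely the requirement over the range $0 \le i \le t$. Since the support condition $\operatorname{Supp}_R(N)\cap \operatorname{Supp}_R(M)\cap \operatorname{Supp}_R(X)\subseteq \operatorname{Var}(\mathfrak{a})$ carries over verbatim, Corollary~\ref{2-10} applies and yields $\operatorname{Hom}_{R}(N, \operatorname{H}^{t}_{\mathfrak{a}}(M, X)) = \operatorname{Ext}^{0}_{R}(N, \operatorname{H}^{t}_{\mathfrak{a}}(M, X)) \in \mathcal S$, as claimed.

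For the final assertion I would take $N = R/\mathfrak{a}$. Then the support hypothesis is automatic, since $\operatorname{Supp}_R(R/\mathfrak{a}) = \operatorname{Var}(\mathfrak{a})$ forces the triple intersection into $\operatorname{Var}(\mathfrak{a})$. The first part therefore gives $\operatorname{Hom}_{R}(R/\mathfrak{a}, \operatorname{H}^{t}_{\mathfrak{a}}(M, X)) \in \mathcal S$, and via the standard isomorphism $\operatorname{Hom}_{R}(R/\mathfrak{a}, \operatorname{H}^{t}_{\mathfrak{a}}(M, X)) \cong (0 :_{\operatorname{H}^{t}_{\mathfrak{a}}(M, X)} \mathfrak{a})$ this says $(0 :_{\operatorname{H}^{t}_{\mathfrak{a}}(M, X)} \mathfrak{a}) \in \mathcal S$. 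Because generalized local cohomology modules are $\mathfrak{a}$-torsion and $\mathcal S$ is Melkersson with respect to $\mathfrak{a}$, the defining property of a Melkersson subcategory then forces $\operatorname{H}^{t}_{\mathfrak{a}}(M, X) \in \mathcal S$.

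As this is a corollary obtained purely by specialization, I do not expect any genuine obstacle. The only point requiring care is the bookkeeping showing that the two ``missing'' hypotheses of Corollary~\ref{2-10}—namely~(iii) and the $i > t$ portion of~(i)—hold automatically through the vanishing of Ext and Tor in negative degrees; once this observation is recorded, both the main statement and its Melkersson consequence follow at once.
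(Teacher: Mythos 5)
Your proposal is correct and takes essentially the same route as the paper, whose entire proof is ``Take $s=0$ in Corollary \ref{2-10}.'' The additional bookkeeping you record --- the vacuous vanishing of $\operatorname{Ext}$ and $\operatorname{Tor}$ in negative degrees, and the Melkersson step using that $\operatorname{H}^{t}_{\mathfrak{a}}(M,X)$ is $\mathfrak{a}$--torsion --- is exactly what the paper leaves implicit.
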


\begin{proof}
Take $s= 0$ in Corollary \ref{2-10}.
\end{proof}


\begin{rem}\label{2-12}
Let $X$ be an arbitrary $R$--module, $M$ a finite $R$--module, and $t$ a non-negative integer such that $\operatorname{Ext}^{t}_{R}(M/\mathfrak{a}M, X)\in \mathcal S$ and $\operatorname{H}^{j}_{\mathfrak{a}}(X) \in \mathcal S$ for all $j< t$. Let $i$ be an integer such that $1\leq i\leq t$. Since $\operatorname{Ext}^{t- i- j}_{R}(\operatorname{Tor}^{R}_{i}(R/\mathfrak{a}, M), \operatorname{H}^{j}_{\mathfrak{a}}(X))\in \mathcal S$ for all $j\leq t- i$, $\operatorname{Ext}^{t- i}_{R}(\operatorname{Tor}^{R}_{i}(R/\mathfrak{a}, M), X)\in \mathcal S$ from \emph{\cite[Theorem 2.1]{ATV}}. Thus $\operatorname{Ext}^{t- i}_{R}(\operatorname{Tor}^{R}_{i}(R/\mathfrak{a}, M), X)\in \mathcal S$ for all $i\leq t$. On the other hand, we have $\operatorname{H}^{i}_{\mathfrak{a}}(M, X)\in \mathcal S$ for all $i<t$ by \emph{\cite[Corollary 2.9]{VA}}. Hence $\operatorname{Ext}^{t+ 1- i}_{R}(R/\mathfrak{a}, \operatorname{H}^{i}_{\mathfrak{a}}(M, X))\in \mathcal S$ for all $i< t$. Therefore $\operatorname{Hom}_{R}(R/\mathfrak{a}, \operatorname{H}^{t}_{\mathfrak{a}}(M, X)) \in \mathcal S$ from \emph{Corollary \ref{2-11}}. Thus \emph{Corollary \ref{2-11}} improves \emph{\cite[Theorem 2.10(a) and Corollary 2.12]{VA}}.
\end{rem}


\begin{cor}\label{2-13}
\emph{(cf. \cite[Theorem 4.4]{ATV} and \cite[Theorem 2.7]{VHH})}
Suppose that $X$ is an arbitrary $R$--module, $M, N$ are finite $R$--modules such that $\operatorname{Supp}_R(N)\cap \operatorname{Supp}_R(M)\cap \operatorname{Supp}_R(X)\subseteq \operatorname{Var}(\mathfrak{a})$, and $t$ is a non-negative integer. Assume also that
\begin{enumerate}
  \item[\emph{(i)}]  $\operatorname{Ext}^{t+ 1- i}_{R}(\operatorname{Tor}^{R}_{i}(N, M), X)\in \mathcal S$ for all $i\leq t+ 1$, and
  \item[\emph{(ii)}]  $\operatorname{Ext}^{t+ 2- i}_{R}(N, \operatorname{H}^{i}_{\mathfrak{a}}(M, X))\in \mathcal S$ for all $i< t$.
\end{enumerate}
Then $\operatorname{Ext}^1_{R}(N, \operatorname{H}^{t}_{\mathfrak{a}}(M, X)) \in \mathcal S$.
\end{cor}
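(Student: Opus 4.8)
The plan is to obtain this as the $s= 1$ specialization of Corollary \ref{2-10}, so the whole task reduces to checking that the two hypotheses stated here, together with the support condition $\operatorname{Supp}_R(N)\cap \operatorname{Supp}_R(M)\cap \operatorname{Supp}_R(X)\subseteq \operatorname{Var}(\mathfrak{a})$, supply the three hypotheses of that corollary once we set $s= 1$. First I would write out what Corollary \ref{2-10} demands at $s= 1$: condition (i) becomes $\operatorname{Ext}^{t+ 1- i}_{R}(\operatorname{Tor}^{R}_{i}(N, M), X)\in \mathcal S$ for all $i$; condition (ii) becomes $\operatorname{Ext}^{t+ 2- i}_{R}(N, \operatorname{H}^{i}_{\mathfrak{a}}(M, X))\in \mathcal S$ for all $i< t$; and condition (iii) becomes $\operatorname{Ext}^{t- i}_{R}(N, \operatorname{H}^{i}_{\mathfrak{a}}(M, X))\in \mathcal S$ for all $i> t$. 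The conclusion then reads $\operatorname{Ext}^{1}_{R}(N, \operatorname{H}^{t}_{\mathfrak{a}}(M, X))\in \mathcal S$, which is exactly what we want.

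Next I would match the hypotheses one at a time. Condition (ii) at $s= 1$ is literally our hypothesis (ii), so nothing is needed there. For condition (i) at $s= 1$, our hypothesis (i) only asserts membership for $i\leq t+ 1$; however, for $i\geq t+ 2$ the cohomological degree $t+ 1- i$ is negative, so $\operatorname{Ext}^{t+ 1- i}_{R}(\operatorname{Tor}^{R}_{i}(N, M), X)= 0$, which lies in $\mathcal S$ since a Serre subcategory contains the zero module. Hence condition (i) at $s= 1$ holds for every $i$.

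The only point requiring comment is condition (iii) at $s= 1$, which has no counterpart among the hypotheses given here, and this is the step I would flag. It dissolves immediately, though: for each $i> t$ the degree $t- i$ is strictly negative, so $\operatorname{Ext}^{t- i}_{R}(N, \operatorname{H}^{i}_{\mathfrak{a}}(M, X))= 0\in \mathcal S$ automatically, and condition (iii) is vacuously satisfied. With all three hypotheses of Corollary \ref{2-10} in place at $s= 1$, a single application of that corollary yields the conclusion. I expect no genuine obstacle: the real content already sits inside Corollary \ref{2-10} (built from the spectral sequence of Lemma \ref{2-1} and the support reduction via \cite[Lemma 2.5(c)]{VA}), and here the ``$i> t$'' vanishing comes for free precisely because $s= 1$ pushes that row into negative degree.
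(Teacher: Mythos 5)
Your proof is correct and is exactly the paper's argument: the paper's entire proof reads ``Put $s=1$ in Corollary \ref{2-10}.'' Your additional verifications (that the missing cases of hypotheses (i) and (iii) of Corollary \ref{2-10} hold automatically because the relevant $\operatorname{Ext}$ modules sit in negative degree and hence vanish) are precisely the routine checks the paper leaves implicit, and they are carried out correctly.
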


\begin{proof}
Put $s= 1$ in Corollary \ref{2-10}.
\end{proof}


\begin{thm}\label{2-14}
Suppose that $X$ is an arbitrary $R$--module, $M, N$ are finite $R$--modules, and $s, t$ are non-negative integers. Assume also that
\begin{enumerate}
  \item[\emph{(i)}]  $\operatorname{Ext}^{s+ t- i}_{R}(N, \operatorname{H}^{i}_{\mathfrak{a}}(M, X))\in \mathcal S$ for all $i$,
  \item[\emph{(ii)}]   $\operatorname{H}^{s+ t+ 1- i}_{\mathfrak{a}}(\operatorname{Tor}^{R}_{i}(N, M), X)\in \mathcal S$ for all $i< t$, and
  \item[\emph{(iii)}] $\operatorname{H}^{s+ t- 1- i}_{\mathfrak{a}}(\operatorname{Tor}^{R}_{i}(N, M), X)\in \mathcal S$ for all $i> t$.
\end{enumerate}
Then $\operatorname{H}^{s}_{\mathfrak{a}}(\operatorname{Tor}^{R}_{t}(N, M), X)\in \mathcal S$ and
\begin{flushleft}
$\quad\lambda(\operatorname{H}^{s}_{\mathfrak{a}}(\operatorname{Tor}^{R}_{t}(N, M), X))\preceq(\underset{i= 0}{\overset{s+ t}\circ}\lambda(\operatorname{Ext}^{s+ t- i}_{R}(N, \operatorname{H}^{i}_{\mathfrak{a}}(M, X))))$
\end{flushleft}
\begin{flushright}
$\displaystyle\circ (\underset{i= 0}{\overset{t- 1}\circ}\lambda(\operatorname{H}^{s+ t+ 1- i}_{\mathfrak{a}}(\operatorname{Tor}^{R}_{i}(N, M), X)))\displaystyle\circ (\underset{i= t+ 1}{\overset{s+ t- 1}\circ}\lambda(\operatorname{H}^{s+ t- 1- i}_{\mathfrak{a}}(\operatorname{Tor}^{R}_{i}(N, M), X))).\quad$
\end{flushright}
\end{thm}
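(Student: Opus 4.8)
The plan is to rerun the argument of Theorem~\ref{2-9} with the two spectral sequences of Lemma~\ref{2-1} interchanged, exploiting the fact that both abut to the \emph{same} limit $H^{p+q}$. First I would invoke Lemma~\ref{2-1}(ii) to obtain
$${}^{II}E_{2}^{p, q}:= \operatorname{Ext}^{p}_{R}(N, \operatorname{H}^{q}_{\mathfrak{a}}(M, X))\underset{p}\Longrightarrow H^{p+ q}.$$
For each $i\leq s+t$, a degree count in this third quadrant spectral sequence gives ${}^{II}E_{\infty}^{s+t-i, i}= {}^{II}E_{s+t+2}^{s+t-i, i}$, since ${}^{II}E_{j}^{s+t-i-j, i-1+j}= 0= {}^{II}E_{j}^{s+t-i+j, i+1-j}$ for all $j\geq s+t+2$. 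Assumption~(i) places every ${}^{II}E_{2}^{s+t-i, i}$ in $\mathcal{S}$, and as ${}^{II}E_{\infty}^{s+t-i, i}$ is a subquotient of it, each ${}^{II}E_{\infty}^{s+t-i, i}$ lies in $\mathcal{S}$ with $\lambda({}^{II}E_{\infty}^{s+t-i, i})\preceq \lambda({}^{II}E_{2}^{s+t-i, i})$. The finite filtration $0= \psi^{s+t+1}H^{s+t}\subseteq \cdots\subseteq \psi^{0}H^{s+t}= H^{s+t}$ with factors ${}^{II}E_{\infty}^{s+t-i, i}$ then has all its quotients in $\mathcal{S}$, so the associated short exact sequences and subadditivity of $\lambda$ yield $H^{s+t}\in \mathcal{S}$ and $\lambda(H^{s+t})\preceq \underset{i=0}{\overset{s+t}\circ}\lambda({}^{II}E_{2}^{s+t-i, i})$.

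Next I would bring in the companion spectral sequence from Lemma~\ref{2-1}(i),
$${}^{I}E_{2}^{p, q}:= \operatorname{H}^{p}_{\mathfrak{a}}(\operatorname{Tor}^{R}_{q}(N, M), X)\underset{p}\Longrightarrow H^{p+ q},$$
abutting to the same $H^{p+q}$. Its filtration $0= \phi^{s+t+1}H^{s+t}\subseteq\cdots\subseteq\phi^{0}H^{s+t}= H^{s+t}$ has factors ${}^{I}E_{\infty}^{s+t-i, i}$; because $H^{s+t}\in \mathcal{S}$, the submodule $\phi^{s}H^{s+t}$ lies in $\mathcal{S}$, hence so does ${}^{I}E_{\infty}^{s, t}\cong \phi^{s}H^{s+t}/\phi^{s+1}H^{s+t}$, with $\lambda({}^{I}E_{\infty}^{s, t})\preceq \lambda(H^{s+t})$. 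A further degree count gives ${}^{I}E_{s+t+2}^{s, t}= {}^{I}E_{\infty}^{s, t}\in \mathcal{S}$, which is the input for the final induction.

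That final step is a descending induction on the page $r$ from $s+t+2$ down to $2$. Writing ${}^{I}Z_{r}^{s, t}= \operatorname{Ker}({}^{I}E_{r}^{s, t}\to {}^{I}E_{r}^{s+r, t+1-r})$ and ${}^{I}B_{r}^{s, t}= \operatorname{Im}({}^{I}E_{r}^{s-r, t-1+r}\to {}^{I}E_{r}^{s, t})$, the two short exact sequences
$$0\longrightarrow {}^{I}Z_{r}^{s, t}\longrightarrow {}^{I}E_{r}^{s, t}\longrightarrow {}^{I}E_{r}^{s, t}/{}^{I}Z_{r}^{s, t}\longrightarrow 0$$
and
$$0\longrightarrow {}^{I}B_{r}^{s, t}\longrightarrow {}^{I}Z_{r}^{s, t}\longrightarrow {}^{I}E_{r+1}^{s, t}\longrightarrow 0$$
link ${}^{I}E_{r}^{s, t}$ to ${}^{I}E_{r+1}^{s, t}$. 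By assumptions~(ii) and~(iii) the terms ${}^{I}E_{2}^{s+r, t+1-r}$ and ${}^{I}E_{2}^{s-r, t-1+r}$ are in $\mathcal{S}$, whence so are their subquotients ${}^{I}E_{r}^{s+r, t+1-r}$ and ${}^{I}E_{r}^{s-r, t-1+r}$, forcing ${}^{I}E_{r}^{s, t}/{}^{I}Z_{r}^{s, t}$ and ${}^{I}B_{r}^{s, t}$ into $\mathcal{S}$. Thus ${}^{I}E_{r}^{s, t}\in \mathcal{S}$ whenever ${}^{I}E_{r+1}^{s, t}\in \mathcal{S}$, with $\lambda({}^{I}E_{r}^{s, t})\preceq \lambda({}^{I}E_{r+1}^{s, t})\displaystyle\circ \lambda({}^{I}E_{2}^{s+r, t+1-r})\displaystyle\circ \lambda({}^{I}E_{2}^{s-r, t-1+r})$. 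Composing these estimates from $r=s+t+2$ down to $r=2$ produces ${}^{I}E_{2}^{s, t}= \operatorname{H}^{s}_{\mathfrak{a}}(\operatorname{Tor}^{R}_{t}(N, M), X)\in \mathcal{S}$ together with the displayed $\lambda$-inequality.

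Since the whole scheme is the mirror image of Theorem~\ref{2-9}, I foresee no essential obstacle. The only points demanding care are the bookkeeping that identifies which $E_{2}$-positions lie in $\mathcal{S}$ (this is what justifies both the page-stabilizations and the backward induction), and the disciplined tracking of $\lambda$ through each short exact sequence, so that the two composite bounds --- one from assembling $H^{s+t}$ via ${}^{II}E$ using hypothesis~(i), the other from peeling off ${}^{I}E_{2}^{s,t}$ using hypotheses~(ii) and~(iii) --- combine into the single stated inequality.
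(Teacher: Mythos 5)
Your proposal is correct and is exactly the argument the paper has in mind: the paper's own ``proof'' of Theorem~\ref{2-14} simply says it is sufficiently similar to that of Theorem~\ref{2-9} to be omitted, and what you have written is precisely that mirrored argument, with the roles of ${}^{I}E$ and ${}^{II}E$ interchanged (hypothesis~(i) assembling $H^{s+t}$ through the ${}^{II}E$-filtration, then the backward page induction on ${}^{I}E_{r}^{s,t}$ using hypotheses~(ii) and~(iii)). The index bookkeeping and the $\lambda$-estimates you give compose into exactly the stated inequality, so there is no gap.
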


\begin{proof}
The proof is sufficiently similar to that of Theorem \ref{2-9} to be omitted. We leave the proof to the reader.
\end{proof}


\begin{cor}\label{2-15}
\emph{(cf. \cite[Theorem 2.13]{VHH})}
Suppose that $X$ is an arbitrary $R$--module, $M, N$ are finite $R$--modules, and $s$ is a non-negative integer. Assume also that
\begin{enumerate}
  \item[\emph{(i)}]  $\operatorname{Ext}^{s- i}_{R}(N, \operatorname{H}^{i}_{\mathfrak{a}}(M, X))\in \mathcal S$ for all $i\leq s$, and
  \item[\emph{(ii)}] $\operatorname{H}^{s- 1- i}_{\mathfrak{a}}(\operatorname{Tor}^{R}_{i}(N, M), X)\in \mathcal S$ for all $0< i< s$.
\end{enumerate}
Then $\operatorname{H}^s_{\mathfrak{a}}(N\otimes_RM, X)\in \mathcal S$. In particular, if $\operatorname{Supp}_R(N)\cap \operatorname{Supp}_R(M)\cap \operatorname{Supp}_R(X)\subseteq \operatorname{Var}(\mathfrak{a})$ $($resp. $N= R/\mathfrak{a})$, then $\operatorname{Ext}^s_{R}(N\otimes_RM, X)\in \mathcal S$ $($resp. $\operatorname{Ext}^s_{R}(M/\mathfrak{a}M, X)\in \mathcal S)$.
\end{cor}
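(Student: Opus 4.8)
The plan is to obtain this corollary as the special case $t = 0$ of Theorem~\ref{2-14}. With $t = 0$ one has $\operatorname{Tor}^{R}_{t}(N, M) = \operatorname{Tor}^{R}_{0}(N, M) = N\otimes_{R}M$, so the conclusion $\operatorname{H}^{s}_{\mathfrak{a}}(\operatorname{Tor}^{R}_{t}(N, M), X)\in \mathcal S$ of Theorem~\ref{2-14} becomes precisely $\operatorname{H}^{s}_{\mathfrak{a}}(N\otimes_{R}M, X)\in \mathcal S$, which is the assertion I want. So the only work is to check that hypotheses (i), (ii), (iii) of Theorem~\ref{2-14} specialize, at $t = 0$, to the two hypotheses of the present corollary.

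First I would check hypothesis (i) of Theorem~\ref{2-14}: at $t = 0$ it reads $\operatorname{Ext}^{s- i}_{R}(N, \operatorname{H}^{i}_{\mathfrak{a}}(M, X))\in \mathcal S$ for all $i$. For $i\leq s$ this is exactly assumption (i) of the corollary; for $i > s$ the Ext degree $s - i$ is negative, so the module is zero and hence lies in $\mathcal S$ automatically. Next, hypothesis (ii) of Theorem~\ref{2-14} at $t = 0$ requires $\operatorname{H}^{s+ 1- i}_{\mathfrak{a}}(\operatorname{Tor}^{R}_{i}(N, M), X)\in \mathcal S$ for all $i < 0$, which is vacuous. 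Finally hypothesis (iii) at $t = 0$ reads $\operatorname{H}^{s- 1- i}_{\mathfrak{a}}(\operatorname{Tor}^{R}_{i}(N, M), X)\in \mathcal S$ for all $i > 0$; when $0 < i < s$ this is assumption (ii) of the corollary, and when $i\geq s$ the cohomological degree $s - 1 - i$ is negative, so again the module vanishes and belongs to $\mathcal S$. Thus all the hypotheses of Theorem~\ref{2-14} hold and the theorem yields $\operatorname{H}^{s}_{\mathfrak{a}}(N\otimes_{R}M, X)\in \mathcal S$.

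For the ``in particular'' statements I would invoke \cite[Lemma 2.5(c)]{VA}, exactly as in the proofs of Corollaries~\ref{2-4} and~\ref{2-10}, to replace the generalized local cohomology module by an Ext module. Since $\operatorname{Supp}_{R}(N\otimes_{R}M) = \operatorname{Supp}_{R}(N)\cap \operatorname{Supp}_{R}(M)$, the hypothesis $\operatorname{Supp}_{R}(N)\cap \operatorname{Supp}_{R}(M)\cap \operatorname{Supp}_{R}(X)\subseteq \operatorname{Var}(\mathfrak{a})$ gives $\operatorname{Supp}_{R}(N\otimes_{R}M)\cap \operatorname{Supp}_{R}(X)\subseteq \operatorname{Var}(\mathfrak{a})$, so that lemma provides $\operatorname{H}^{s}_{\mathfrak{a}}(N\otimes_{R}M, X)\cong \operatorname{Ext}^{s}_{R}(N\otimes_{R}M, X)$ and the Ext module lies in $\mathcal S$. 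The case $N = R/\mathfrak{a}$ is then immediate: there $\operatorname{Supp}_{R}(R/\mathfrak{a}) = \operatorname{Var}(\mathfrak{a})$ forces the support condition, and $R/\mathfrak{a}\otimes_{R}M = M/\mathfrak{a}M$, giving $\operatorname{Ext}^{s}_{R}(M/\mathfrak{a}M, X)\in \mathcal S$.

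Since the whole argument is a specialization of Theorem~\ref{2-14}, there is no serious obstacle; the only point requiring care is the index bookkeeping in the second paragraph, namely observing that the ranges ``for all $i$'' and ``for all $i > 0$'' in Theorem~\ref{2-14} are covered by the corollary's hypotheses together with the automatic vanishing of $\operatorname{Ext}$ and local cohomology in negative degrees. I would double-check in particular the boundary index $i = s$ in hypothesis (iii), where $s - 1 - i = -1$ makes the relevant module zero, so that the corollary's range $0 < i < s$ indeed suffices.
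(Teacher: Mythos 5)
Your proposal is correct and matches the paper's own proof exactly: the paper also obtains this corollary by putting $t=0$ in Theorem~\ref{2-14} and then deduces the ``in particular'' statements from \cite[Lemma 2.5(c)]{VA}. Your additional index bookkeeping (vacuous ranges and vanishing of $\operatorname{Ext}$ and $\operatorname{H}_{\mathfrak{a}}$ in negative degrees) is just the detail the paper leaves implicit.
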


\begin{proof}
Put $t= 0$ in Theorem \ref{2-14}.
The last part follows from \cite[Lemma 2.5(c)]{VA}.
\end{proof}


Recall that, an $R$--module $X$ is said to be $(\mathcal S, \mathfrak{a})$--cofinite if $\operatorname{Supp}_R(X)\subseteq \operatorname{Var}(\mathfrak{a})$ and $\operatorname{Ext}^{i}_{R}(R/\mathfrak{a}, X)\in \mathcal S$ for all $i$ \cite[Definition 4.1]{ATV}. Note that when $\mathcal S$ is the category of finite $R$--modules, X is $(\mathcal S, \mathfrak{a})$--cofinite exactly when X is $\mathfrak{a}$--cofinite.

\begin{cor}\label{2-16}
\emph{(cf. \cite[Corollary 3.10]{Mel}, \cite[Theorem 4.2]{ATV}, and \cite[Corollary 2.14]{VHH})} Let $M$ be a finite $R$--module, $X$ an arbitrary $R$--module, and $t$ a non-negative integer such that $\operatorname{H}^{i}_{\mathfrak{a}}(M, X)$ is $(\mathcal S, \mathfrak{a})$--cofinite for all $i\leq t$ $($for all $i)$. Then $\operatorname{Ext}^i_{R}(M/\mathfrak{a}M, X)$ is in $\mathcal S$ for all $i\leq t$ $($for all $i)$.
\end{cor}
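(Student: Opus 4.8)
The plan is to induct on $t$ and feed Corollary \ref{2-15} (with $N= R/\mathfrak{a}$ and $s= t$) at each stage, exploiting that this corollary already outputs exactly $\operatorname{Ext}^{t}_{R}(M/\mathfrak{a}M, X)\in \mathcal S$ from hypotheses phrased in terms of $\operatorname{H}^{i}_{\mathfrak{a}}(M, X)$. First I would record the two standing facts that make the induction go through. For any finite $R$--module $L$ annihilated by $\mathfrak{a}$ the system defining $\operatorname{H}^{j}_{\mathfrak{a}}(L, X)= \varinjlim_{n} \operatorname{Ext}^{j}_{R}(L/\mathfrak{a}^{n}L, X)$ is eventually constant with identity transition maps (as $L/\mathfrak{a}^{n}L= L$ for $n\geq 1$), so $\operatorname{H}^{j}_{\mathfrak{a}}(L, X)\cong \operatorname{Ext}^{j}_{R}(L, X)$; this applies to $L= \operatorname{Tor}^{R}_{k}(R/\mathfrak{a}, M)$, whose terms computed from $R/\mathfrak{a}\otimes_{R} Q_{\bullet}$ (for a free resolution $Q_{\bullet}$ of $M$) are all killed by $\mathfrak{a}$. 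Secondly, $\operatorname{Supp}_R(\operatorname{Tor}^{R}_{k}(R/\mathfrak{a}, M))\subseteq \operatorname{Supp}_R(R/\mathfrak{a})\cap \operatorname{Supp}_R(M)= \operatorname{Supp}_R(M/\mathfrak{a}M)$.

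For the base case $t= 0$ I would invoke Corollary \ref{2-15} with $s= 0$ and $N= R/\mathfrak{a}$: hypothesis (i) collapses to $\operatorname{Hom}_{R}(R/\mathfrak{a}, \operatorname{H}^{0}_{\mathfrak{a}}(M, X))\in \mathcal S$, which is immediate from the $(\mathcal S, \mathfrak{a})$--cofiniteness of $\operatorname{H}^{0}_{\mathfrak{a}}(M, X)$, while hypothesis (ii) is vacuous; this gives $\operatorname{Hom}_{R}(M/\mathfrak{a}M, X)\in \mathcal S$.

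For the inductive step I assume the statement for $t- 1$, so that $(\mathcal S, \mathfrak{a})$--cofiniteness of $\operatorname{H}^{i}_{\mathfrak{a}}(M, X)$ for all $i\leq t$ already yields $\operatorname{Ext}^{i}_{R}(M/\mathfrak{a}M, X)\in \mathcal S$ for all $i\leq t- 1$. I then verify the hypotheses of Corollary \ref{2-15} with $s= t$ and $N= R/\mathfrak{a}$. Hypothesis (i), that $\operatorname{Ext}^{t- i}_{R}(R/\mathfrak{a}, \operatorname{H}^{i}_{\mathfrak{a}}(M, X))\in \mathcal S$ for all $i\leq t$, holds because every such $\operatorname{H}^{i}_{\mathfrak{a}}(M, X)$ with $i\leq t$ is $(\mathcal S, \mathfrak{a})$--cofinite. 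Hypothesis (ii) asks for $\operatorname{H}^{t- 1- k}_{\mathfrak{a}}(\operatorname{Tor}^{R}_{k}(R/\mathfrak{a}, M), X)\in \mathcal S$ for $0< k< t$, and this is where the work lies. I would apply Lemma \ref{2-7} with $N= M/\mathfrak{a}M$ and bound $t- 1$, whose hypothesis is precisely the inductive hypothesis, together with the support containment above, to obtain $\operatorname{Ext}^{i}_{R}(\operatorname{Tor}^{R}_{k}(R/\mathfrak{a}, M), X)\in \mathcal S$ for all $i\leq t- 1$. Since $0\leq t- 1- k\leq t- 1$ when $0< k< t$, and since $\operatorname{Tor}^{R}_{k}(R/\mathfrak{a}, M)$ is $\mathfrak{a}$--torsion, the first standing fact converts this into $\operatorname{H}^{t- 1- k}_{\mathfrak{a}}(\operatorname{Tor}^{R}_{k}(R/\mathfrak{a}, M), X)\cong \operatorname{Ext}^{t- 1- k}_{R}(\operatorname{Tor}^{R}_{k}(R/\mathfrak{a}, M), X)\in \mathcal S$, which is exactly (ii). Corollary \ref{2-15} then gives $\operatorname{Ext}^{t}_{R}(M/\mathfrak{a}M, X)\in \mathcal S$, closing the induction; the unbounded version is obtained by letting $t$ range over all non-negative integers.

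I expect the one genuine obstacle to be hypothesis (ii) of Corollary \ref{2-15}: the modules $\operatorname{Tor}^{R}_{k}(R/\mathfrak{a}, M)$ are not $M$, so the cofiniteness assumption does not bear on them directly, and a careless attempt to verify (ii) is circular. The device that breaks the circularity is to transport the lower--degree information from $M/\mathfrak{a}M$ to the Tor--modules via the support containment and Lemma \ref{2-7}, after trading generalized local cohomology of an $\mathfrak{a}$--torsion module for ordinary $\operatorname{Ext}$. The only point demanding care is that the cohomological degree $t- 1- k$ must lie in the range $[0, t- 1]$ reached by the inductive hypothesis, which is guaranteed exactly by the restriction $0< k< t$ appearing in (ii).
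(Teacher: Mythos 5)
Your proof is correct and follows essentially the same route as the paper's: induction on $t$, using Lemma \ref{2-7} (with $N=M/\mathfrak{a}M$ and the support containment for $\operatorname{Tor}^{R}_{k}(R/\mathfrak{a},M)$) to verify hypothesis (ii) of Corollary \ref{2-15} applied with $N=R/\mathfrak{a}$ and $s=t$. The only cosmetic difference is that where the paper cites \cite[Lemma 2.5(c)]{VA} to identify $\operatorname{H}^{j}_{\mathfrak{a}}(L,X)$ with $\operatorname{Ext}^{j}_{R}(L,X)$ for the $\mathfrak{a}$--torsion modules $L=\operatorname{Tor}^{R}_{k}(R/\mathfrak{a},M)$, you prove this identification directly from the direct-limit definition, which is a valid self-contained substitute.
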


\begin{proof}
Follows from Lemma \ref{2-7}, \cite[Lemma 2.5(c)]{VA}, Corollary \ref{2-15}, and using an induction on $t$.
\end{proof}
\section{More applications}\label{3}



Let $X$ be an arbitrary $R$--module. We denote $\operatorname{id}_R(X)$ and $\operatorname{fd}_R(X)$ as the injective dimension and the flat dimension of $X$, respectively. In \cite{V} and for a given non-negative integer $t$, the first author compared the injective dimension of local cohomology module $\operatorname{H}^{t}_\mathfrak{a}(X)$ with the injective dimension of $X$ and those of some other local cohomology modules $\operatorname{H}^{i}_\mathfrak{a}(X)$ where $i\neq t$. He proved, in \cite[Theorem 2.8]{V}, that the inequality
$$\operatorname{id}_R(\operatorname{H}^{t}_\mathfrak{a}(X))\leq \sup \ \{\operatorname{id}_{R}(X)- t\}\cup \{\operatorname{id}_{R}(\operatorname{H}^{i}_\mathfrak{a}(X))- t+ i- 1: i< t\} \cup \{\operatorname{id}_{R}(\operatorname{H}^{i}_\mathfrak{a}(X))- t+ i+ 1: i> t\}$$
holds true. In the next result, we generalize the above inequality.

\begin{cor}\label{3-1}
Let $M$ be a finite $R$--module, $X$ an arbitrary $R$--module, and $t$ a non-negative integer. Then
\begin{flushleft}
$\quad\operatorname{id}_R(\operatorname{H}^{t}_\mathfrak{a}(M, X))\leq\sup \ \{\operatorname{fd}_R(M)+ \operatorname{id}_{R}(X)- t\}$
\end{flushleft}
\begin{flushright}
$\cup \{\operatorname{id}_{R}(\operatorname{H}^{i}_\mathfrak{a}(M, X))- t+ i- 1: i< t\}\cup \{\operatorname{id}_{R}(\operatorname{H}^{i}_\mathfrak{a}(M, X))- t+ i+ 1: i> t\}.\quad$
\end{flushright}
\end{cor}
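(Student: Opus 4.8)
The plan is to reduce the injective-dimension bound to a vanishing statement for Ext modules and then feed it into Theorem \ref{2-9} applied to the zero Serre subcategory. Recall the standard characterization over a Noetherian ring: for any $R$--module $Y$ and any integer $c$, one has $\operatorname{id}_R(Y)\leq c$ precisely when $\operatorname{Ext}^{s}_R(R/\mathfrak p, Y)= 0$ for every $\mathfrak p\in\operatorname{Spec}(R)$ and every $s> c$. So, writing $c$ for the supremum on the right-hand side of the statement (which I may assume finite, since otherwise there is nothing to prove), it suffices to show that $\operatorname{Ext}^{s}_R(R/\mathfrak p, \operatorname{H}^{t}_\mathfrak a(M,X))= 0$ for all primes $\mathfrak p$ and all $s> c$.

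First I would fix $s> c$ and set $N= R/\mathfrak p$, and take $\mathcal S$ to be the Serre subcategory whose only object is the zero module. Under this choice the hypotheses (i)--(iii) of Theorem \ref{2-9} become vanishing assertions, and its conclusion becomes exactly $\operatorname{Ext}^{s}_R(N, \operatorname{H}^{t}_\mathfrak a(M,X))= 0$. Thus the whole argument reduces to verifying (i)--(iii) for each $s> c$. Conditions (ii) and (iii) are immediate from the definition of $c$: for $i< t$ the inequality $s> \operatorname{id}_R(\operatorname{H}^{i}_\mathfrak a(M,X))- t+ i- 1$ gives $s+ t+ 1- i> \operatorname{id}_R(\operatorname{H}^{i}_\mathfrak a(M,X))$, so $\operatorname{Ext}^{s+ t+ 1- i}_R(N, \operatorname{H}^{i}_\mathfrak a(M,X))= 0$; likewise, for $i> t$ the inequality $s> \operatorname{id}_R(\operatorname{H}^{i}_\mathfrak a(M,X))- t+ i+ 1$ gives $s+ t- 1- i> \operatorname{id}_R(\operatorname{H}^{i}_\mathfrak a(M,X))$, which yields (iii).

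The step I expect to be the main obstacle is condition (i), namely $\operatorname{H}^{s+ t- i}_\mathfrak a(\operatorname{Tor}^{R}_{i}(N,M), X)= 0$ for all $i$. Here I would combine two vanishing facts. On the one hand $\operatorname{Tor}^{R}_{i}(N,M)= 0$ whenever $i> \operatorname{fd}_R(M)$, so those terms vanish trivially (and terms with $s+ t- i< 0$ vanish for degree reasons). On the other hand, from the defining isomorphism $\operatorname{H}^{j}_\mathfrak a(L, X)\cong \varinjlim_n \operatorname{Ext}^{j}_R(L/\mathfrak a^n L, X)$ together with the fact that $\operatorname{Ext}^{j}_R(-, X)= 0$ for $j> \operatorname{id}_R(X)$, one obtains $\operatorname{H}^{j}_\mathfrak a(L, X)= 0$ for every finite $L$ as soon as $j> \operatorname{id}_R(X)$. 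It then remains to treat $0\leq i\leq \operatorname{fd}_R(M)$, where I need $s+ t- i> \operatorname{id}_R(X)$; but $s> \operatorname{fd}_R(M)+ \operatorname{id}_R(X)- t$ and $i\leq \operatorname{fd}_R(M)$ force $s+ t- i\geq s+ t- \operatorname{fd}_R(M)> \operatorname{id}_R(X)$, so condition (i) holds in all cases.

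With (i)--(iii) confirmed for every $s> c$, Theorem \ref{2-9} gives $\operatorname{Ext}^{s}_R(R/\mathfrak p, \operatorname{H}^{t}_\mathfrak a(M,X))= 0$ for all $\mathfrak p\in\operatorname{Spec}(R)$ and all $s> c$, and the characterization of injective dimension recalled above yields $\operatorname{id}_R(\operatorname{H}^{t}_\mathfrak a(M,X))\leq c$, as required. Note that only the membership part of Theorem \ref{2-9} is used; the subadditive-function estimate plays no role here, since the relevant subcategory is the zero subcategory.
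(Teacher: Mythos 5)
Your proposal is correct and follows essentially the same route as the paper: the paper's own proof is exactly to take $N$ an arbitrary finite $R$--module (you take $N= R/\mathfrak p$), apply Theorem \ref{2-9} with $\mathcal S$ the zero subcategory for each $s$ exceeding the right-hand side, and conclude via the Ext-vanishing characterization of injective dimension. The only difference is that the paper leaves the verification of hypotheses (i)--(iii) and the vanishing facts $\operatorname{Tor}^R_i(N,M)= 0$ for $i> \operatorname{fd}_R(M)$ and $\operatorname{H}^{j}_{\mathfrak a}(L,X)= 0$ for $j> \operatorname{id}_R(X)$ implicit, while you spell them out; these details are exactly the intended ones.
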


\begin{proof}
Assume that the right-hand side of the inequality is a finite number, say $r$. Assume also that $s> r$ and $N$ is a finite $R$--module. Apply Theorem \ref{2-9} with $\mathcal{S}=$ the category of zero $R$--modules to get $\operatorname{Ext}^{s}_{R}(N, \operatorname{H}^{t}_{\mathfrak{a}}(M, X))= 0$. The result follows.
\end{proof}


For a prime ideal $\mathfrak{p}$ of $R$, the number $\mu^i(\mathfrak{p}, X)= \operatorname{dim}_{\kappa(\mathfrak{p})}(\operatorname{Ext}^i_{R_\mathfrak{p}}(\kappa(\mathfrak{p}), X_\mathfrak{p}))$ is known as the $i$th Bass number of $X$ with respect to $\mathfrak{p}$, where $\kappa(\mathfrak{p})= R_\mathfrak{p}/\mathfrak{p} R_\mathfrak{p}$. When $R$ is local with maximal ideal $\mathfrak{m}$, we write $\mu^i(X)= \mu^i(\mathfrak{m}, X)$  and $\kappa= R/\mathfrak{m}$. For given non-negative integers $s$ and $t$, it was shown in \cite[Theorem 2.6]{DY2} that
$$\mu^{s}(\operatorname{H}^{t}_\mathfrak{a}(X))\leq \mu^{s+ t}(X)+ \displaystyle\sum_{i=0}^{t- 1}\mu^{s+ t+ 1- i}(\operatorname{H}^{i}_\mathfrak{a}(X))+  \displaystyle\sum_{i=t+ 1}^{s+ t- 1}\mu^{s+ t- 1- i}(\operatorname{H}^{i}_\mathfrak{a}(X)).$$
We generalize the above inequality in the following corollary.

\begin{cor}\label{3-2}
Let $R$ be a local ring, $M$ a finite $R$--module, $X$ an arbitrary $R$--module, and $s, t$ non-negative integers. Then
\begin{flushleft}
$\quad\quad\quad\mu^{s}(\operatorname{H}^{t}_\mathfrak{a}(M, X))\leq
\displaystyle\sum_{i= 0}^{s+ t} \operatorname{dim}_\kappa(\operatorname{Ext}^{s+ t- i}_{R}(\operatorname{Tor}^{R}_{i}(\kappa, M), X))$
\end{flushleft}
\begin{flushright}
$+ \displaystyle\sum_{i=0}^{t- 1}\mu^{s+ t+ 1- i}(\operatorname{H}^{i}_\mathfrak{a}(M, X))+  \displaystyle\sum_{i=t+ 1}^{s+ t- 1}\mu^{s+ t- 1 -i}(\operatorname{H}^{i}_\mathfrak{a}(M, X)).\quad\quad\quad$
\end{flushright}
\end{cor}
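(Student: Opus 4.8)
The plan is to apply Theorem~\ref{2-9} with a carefully chosen subadditive function $\lambda$ on the Serre subcategory $\mathcal{S}$ of finitely generated $R$--modules (the ring $R$ being local). Since Bass numbers are computed by applying $\operatorname{Hom}_{R_\mathfrak p}(\kappa(\mathfrak p),-)$ after localizing, the natural choice here is $\mathcal{S}=$ the category of $R$--modules of finite length (equivalently, the modules killed by a power of $\mathfrak m$ and finitely generated), together with the function $\lambda(Y)=\operatorname{dim}_\kappa(\operatorname{Hom}_R(\kappa,Y))$, i.e.\ $\mu^0$ at the maximal ideal. This $\lambda$ takes values in the partially ordered Abelian monoid $(\mathbb{N}_0,+,\leq)$, and it is subadditive precisely because of left-exactness of $\operatorname{Hom}_R(\kappa,-)$: for a short exact sequence $0\to Y'\to Y\to Y''\to 0$ of finite-length modules one gets $\dim_\kappa\operatorname{Hom}(\kappa,Y')\le\dim_\kappa\operatorname{Hom}(\kappa,Y)$ from the injection, $\dim_\kappa\operatorname{Hom}(\kappa,Y)\le\dim_\kappa\operatorname{Hom}(\kappa,Y')+\dim_\kappa\operatorname{Hom}(\kappa,Y'')$ from the three-term exact sequence, and the quotient inequality dually. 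So $\lambda$ is a legitimate subadditive function in the sense of the paper.

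The key observation that turns $\lambda$ into the Bass number $\mu^s$ is the standard identity
$$\mu^{s}(Y)=\dim_\kappa\operatorname{Ext}^s_R(\kappa,Y)=\dim_\kappa\operatorname{Hom}_R(\kappa,\operatorname{Ext}^s_R(\kappa,Y))=\lambda(\operatorname{Ext}^s_R(\kappa,Y)),$$
valid once $\operatorname{Ext}^s_R(\kappa,Y)$ has finite length, which holds when $Y$ has finite-length $\operatorname{Ext}$ modules. First I would record that the terms appearing in Theorem~\ref{2-9} are exactly of this form: applying the theorem with $N=\kappa$ gives control of $\operatorname{Ext}^s_R(\kappa,\operatorname{H}^t_\mathfrak a(M,X))$ in terms of $\lambda$ of the $\operatorname{E}_2$-entries $\operatorname{H}^{s+t-i}_\mathfrak a(\operatorname{Tor}^R_i(\kappa,M),X)$ and the mixed terms $\operatorname{Ext}^{s+t\pm1-i}_R(\kappa,\operatorname{H}^i_\mathfrak a(M,X))$, each of which is $\operatorname{Ext}^j_R(\kappa,-)$ of a local cohomology module and hence has its $\lambda$-value equal to a Bass number. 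Thus the $\circ=+$ structure of the monoid translates the conclusion of Theorem~\ref{2-9} directly into the claimed sum.

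Concretely, I would apply Theorem~\ref{2-9} with $N=\kappa$, so that the hypotheses (i)--(iii) are automatically satisfied by $\mathcal{S}$ being the finite-length category (all the listed modules are $\operatorname{Ext}$ or local cohomology modules of finite modules against $\kappa$, hence of finite length over the local ring), and read off
$$\lambda(\operatorname{Ext}^s_R(\kappa,\operatorname{H}^t_\mathfrak a(M,X)))\le \Big(\underset{i=0}{\overset{s+t}{\circ}}\lambda(\operatorname{H}^{s+t-i}_\mathfrak a(\operatorname{Tor}^R_i(\kappa,M),X))\Big)\circ(\cdots),$$
then substitute $\circ=+$ and $\lambda(\operatorname{Ext}^j_R(\kappa,\operatorname{H}^i_\mathfrak a(M,X)))=\mu^{j}(\operatorname{H}^i_\mathfrak a(M,X))$ in the mixed terms. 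The main obstacle, and the point needing genuine care rather than routine bookkeeping, is the very first summand: Theorem~\ref{2-9} yields $\lambda$ of $\operatorname{H}^{s+t-i}_\mathfrak a(\operatorname{Tor}^R_i(\kappa,M),X)$, which is $\dim_\kappa\operatorname{Hom}_R(\kappa,\operatorname{H}^{s+t-i}_\mathfrak a(\operatorname{Tor}^R_i(\kappa,M),X))$, whereas the statement records $\dim_\kappa\operatorname{Ext}^{s+t-i}_R(\operatorname{Tor}^R_i(\kappa,M),X)$. I would reconcile these by invoking Corollary~\ref{2-15} (equivalently, the $N=\kappa$ specialization giving an $\operatorname{H}^s_\mathfrak a(N\otimes_R M,X)\cong\operatorname{Ext}^s$-type identity under the finite-length Serre class), since over the local ring with $N=\kappa$ the support condition $\operatorname{Supp}(\kappa)\cap\operatorname{Supp}(M)\cap\operatorname{Supp}(X)\subseteq\operatorname{Var}(\mathfrak a)$ holds and lets me replace each $\lambda(\operatorname{H}^{s+t-i}_\mathfrak a(\operatorname{Tor}^R_i(\kappa,M),X))$ by $\dim_\kappa(\operatorname{Ext}^{s+t-i}_R(\operatorname{Tor}^R_i(\kappa,M),X))$, exactly matching the first sum in the asserted inequality.
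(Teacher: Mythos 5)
Your overall skeleton is the same as the paper's: the paper proves this corollary by the one line ``take $N=\kappa$ in Corollary~\ref{2-10}'' (Corollary~\ref{2-10} being exactly Theorem~\ref{2-9} combined with the support-condition identification $\operatorname{H}^{j}_{\mathfrak{a}}(L,X)\cong\operatorname{Ext}^{j}_{R}(L,X)$ from \cite[Lemma 2.5(c)]{VA}), which is what you assemble by hand. However, there is a genuine error in your choice of $\lambda$. The function $\lambda(Y)=\dim_\kappa\operatorname{Hom}_R(\kappa,Y)$ (socle dimension) is \emph{not} subadditive in the paper's sense: the quotient inequality $\lambda(Y'')\preceq\lambda(Y)$, which you dismiss as holding ``dually,'' is false. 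For example, over $R=k[[x,y]]$ the module $Y=R/(x^2,y^2)$ has one-dimensional socle (spanned by $xy$), while its quotient $Y''=R/(x^2,xy,y^2)$ has two-dimensional socle (spanned by $x$ and $y$). This is not a harmless slip: inside the proof of Theorem~\ref{2-9} the function $\lambda$ is applied to the abutment $H^{s+t}$ and to its filtration pieces $\psi^{j}H^{s+t}$, which are iterated extensions of the $E_\infty$-terms and hence are only annihilated by a \emph{power} of $\mathfrak{m}$, not by $\mathfrak{m}$ itself; the step $\lambda({}^{II}E_{\infty}^{s,t})=\lambda(\psi^{s}H^{s+t}/\psi^{s+1}H^{s+t})\preceq\lambda(\psi^{s}H^{s+t})$ uses precisely the quotient inequality on such modules, so with your $\lambda$ the chain of inequalities breaks.

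The repair is easy and is what the paper implicitly does: take $\mathcal{S}$ to be the Serre subcategory of finite length $R$--modules and $\lambda=\ell_R$ the length function, which is genuinely (sub)additive on short exact sequences. Since every module occurring in the final inequality ($\operatorname{Ext}^{j}_{R}(\kappa,-)$ and $\operatorname{Ext}^{j}_{R}(\operatorname{Tor}^{R}_{i}(\kappa,M),-)$) is annihilated by $\mathfrak{m}$, its length equals its $\kappa$-dimension, so the conclusion reads exactly as stated. Separately, your claim that hypotheses (i)--(iii) of Theorem~\ref{2-9} are ``automatically satisfied'' because the listed modules have finite length is also wrong: $X$ is an \emph{arbitrary} module, so modules such as $\operatorname{Ext}^{j}_{R}(\kappa,\operatorname{H}^{i}_{\mathfrak{a}}(M,X))$ or $\operatorname{Ext}^{j}_{R}(\operatorname{Tor}^{R}_{i}(\kappa,M),X)$ are $\kappa$-vector spaces but can be infinite-dimensional (already $\operatorname{Hom}_R(\kappa,X)$ is infinite-dimensional for $X=\bigoplus_{n\in\mathbb{N}}\kappa$). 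The corollary must be read as follows: if some term on the right-hand side is infinite the inequality is vacuous, and otherwise all the hypotheses of Corollary~\ref{2-10} (with $N=\kappa$, $\mathcal{S}=$ finite length modules) hold, and the machinery gives the bound.
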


\begin{proof}
Take $N= \kappa$ in Corollary \ref{2-10}.
\end{proof}


As an application of the above corollary, we have the following.

\begin{cor}\label{3-3}
Let $R$ be a local Cohen-Macaulay ring with maximal ideal $\mathfrak{m}$ and let $M$ be a finite $R$--module with finite projective dimension. Then
$$\mu^{s}(\operatorname{H}^{\operatorname{dim}(R)}_\mathfrak{m}(M, R))\leq \operatorname{dim}_\kappa(\operatorname{Hom}_{R}(\operatorname{Tor}^{R}_{s+ \operatorname{dim}(R)}(\kappa, M), R))$$
for all $s$.
\end{cor}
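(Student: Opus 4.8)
The plan is to specialize Corollary~\ref{3-2} to $\mathfrak a=\mathfrak m$, $X=R$, $N=\kappa$ and $t=\dim(R)=:d$, and then to force every term on the resulting right-hand side to vanish except the single summand indexed by $i=s+d$ in the first sum, which is exactly $\dim_\kappa(\operatorname{Hom}_R(\operatorname{Tor}^R_{s+d}(\kappa,M),R))$. Thus the entire argument is a collapse of the three sums appearing in Corollary~\ref{3-2}.

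First I would kill the second and third sums, whose terms are Bass numbers of $\operatorname{H}^i_\mathfrak m(M,R)$ for $i<d$ and for $i>d$ respectively. Both follow from $\operatorname{H}^i_\mathfrak m(M,R)=0$ for $i\neq d$. Since $R$ is Cohen--Macaulay, $\operatorname{H}^j_\mathfrak m(R)=0$ for $j\neq d$, so the vanishing for $i<d$ is immediate (it is the implication recorded just before Corollary~\ref{2-8}). For the vanishing when $i>d$ I would use the spectral sequence of Lemma~\ref{2-1}(ii): with $\operatorname{H}^j_\mathfrak m(R)$ concentrated in degree $d$, it degenerates to $\operatorname{H}^{i}_\mathfrak m(M,R)\cong\operatorname{Ext}^{\,i-d}_R(M,\operatorname{H}^d_\mathfrak m(R))$, and then, writing $\operatorname{H}^d_\mathfrak m(R)$ as the Matlis dual $\omega^{\vee}$ of the canonical module (harmlessly completing), $\operatorname{Ext}^{\,j}_R(M,\omega^{\vee})\cong\operatorname{Tor}^R_j(M,\omega)^{\vee}$ vanishes for $j>0$ because $\omega$ is maximal Cohen--Macaulay and $\operatorname{pd}_RM<\infty$ (the Acyclicity Lemma applied to $F_\bullet\otimes_R\omega$ for a finite free resolution $F_\bullet$ of $M$). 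Hence both sums are zero.

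It then remains to analyse the first sum $\sum_{i=0}^{s+d}\dim_\kappa(\operatorname{Ext}^{\,s+d-i}_R(\operatorname{Tor}^R_i(\kappa,M),R))$. Each $\operatorname{Tor}^R_i(\kappa,M)$ is a $\kappa$-vector space of dimension $\beta_i:=\dim_\kappa\operatorname{Tor}^R_i(\kappa,M)$, hence of finite length, so the $i$-th term equals $\beta_i\cdot\dim_\kappa(\operatorname{Ext}^{\,s+d-i}_R(\kappa,R))$. Because $\operatorname{depth}R=d$, one has $\operatorname{Ext}^{\,j}_R(\kappa,R)=0$ for $j<d$, which removes every index with $s+d-i<d$, that is every $i>s$; and finiteness of $\operatorname{pd}_RM$ removes every index with $i>\operatorname{pd}_RM$. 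The target identity asserts that after these reductions only the term $i=s+d$, namely $\dim_\kappa(\operatorname{Hom}_R(\operatorname{Tor}^R_{s+d}(\kappa,M),R))$, survives, and substituting this back into Corollary~\ref{3-2} gives the stated inequality.

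The step I expect to be the main obstacle is precisely this final collapse of the first sum. The indices $0\le i\le s$ contribute the factor $\dim_\kappa(\operatorname{Ext}^{\,s+d-i}_R(\kappa,R))=\mu^{\,s+d-i}(R)$, and for a general Cohen--Macaulay ring these higher Bass numbers of $R$ do not vanish---they vanish in all degrees $>d$ exactly when $R$ is Gorenstein. So isolating the single $\operatorname{Hom}$-term at $i=s+d$ genuinely hinges on the behaviour of $\operatorname{Ext}^{\,>0}_R(\kappa,R)$, and I would treat this as the crux: either a Gorenstein (or at least finite-injective-dimension) hypothesis is what forces the collapse, or the intended meaning of the right-hand side must be read through the isomorphism $\operatorname{Ext}^s_R(\kappa,\operatorname{H}^d_\mathfrak m(M,R))\cong\operatorname{H}^0_\mathfrak m(\operatorname{Tor}^R_{s+d}(\kappa,M),R)$ coming from Theorem~\ref{2-2}, whose hypotheses I would verify carefully before relying on it.
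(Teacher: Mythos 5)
Your proposal follows exactly the paper's route: specialize Corollary~\ref{3-2} to $N=\kappa$, $X=R$, $t=d:=\dim(R)$, and annihilate the second and third sums via $\operatorname{H}^{i}_{\mathfrak{m}}(M,R)=0$ for all $i\neq d$ (the paper quotes \cite[Corollary 2.9]{VA} and \cite[Corollary 3.2]{CH} for this vanishing; your direct argument, via degeneration of the spectral sequence and $\operatorname{Tor}^{R}_{j}(M,\omega)=0$ for $j>0$ when $\operatorname{pd}_{R}M<\infty$ and $\omega$ is maximal Cohen--Macaulay, proves the same fact). Up to that point the two arguments coincide and are sound.

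The ``main obstacle'' you flag is a genuine gap, and it is the paper's gap, not yours: the paper's entire remaining argument is the sentence ``the assertion follows from Corollary~\ref{3-2}'', which silently replaces the first sum $\sum_{i=0}^{s+d}\dim_{\kappa}\operatorname{Ext}^{s+d-i}_{R}(\operatorname{Tor}^{R}_{i}(\kappa,M),R)$ by its single term at $i=s+d$. As you observe, the terms that actually survive are those with $i\le s$, namely $\sum_{i\le s}\beta_{i}(M)\,\mu^{s+d-i}(R)$ where $\beta_{i}(M)=\dim_{\kappa}\operatorname{Tor}^{R}_{i}(\kappa,M)$, and these involve the higher Bass numbers of $R$. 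Worse, the one term the paper keeps is the one that always dies when $d>0$: $\operatorname{Tor}^{R}_{s+d}(\kappa,M)$ is a $\kappa$--vector space and $\operatorname{Hom}_{R}(\kappa,R)=0$ since $\operatorname{depth}R=d>0$ (indeed $\operatorname{Tor}^{R}_{s+d}(\kappa,M)=0$ for $s\ge 1$, because $\operatorname{pd}_{R}M\le d$ by Auslander--Buchsbaum). So the printed inequality asserts $\mu^{s}(\operatorname{H}^{d}_{\mathfrak{m}}(M,R))=0$ for all $s$, which fails already for $M=R$ and $s=0$: $\operatorname{H}^{d}_{\mathfrak{m}}(R)$ is a nonzero Artinian module, so $\mu^{0}(\operatorname{H}^{d}_{\mathfrak{m}}(R))=\dim_{\kappa}\operatorname{Hom}_{R}(\kappa,\operatorname{H}^{d}_{\mathfrak{m}}(R))>0$, while the right-hand side is $0$. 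Hence the corollary as stated is false whenever $d>0$ (even for Gorenstein $R$, where the left side at $s=0$ is $1$), so no blind argument could close the step you isolated. The honest conclusion of this method is your reduced bound $\mu^{s}(\operatorname{H}^{d}_{\mathfrak{m}}(M,R))\le\sum_{i=0}^{\min(s,\operatorname{pd}_{R}M)}\beta_{i}(M)\,\mu^{s+d-i}(R)$; note that under your proposed Gorenstein repair this collapses to the $i=s$ term $\beta_{s}(M)$, not to the $i=s+d$ term the paper prints, so the statement needs its right-hand side rewritten, exactly as your second suggested fix anticipates.
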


\begin{proof}
By \cite[Corollary 2.9]{VA} and \cite[Corollary 3.2]{CH}, $\operatorname{H}^{i}_\mathfrak{m}(M, R)= 0$ for all $i\neq \operatorname{dim}(R)$. Thus the assertion follows from Corollary \ref{3-2}.
\end{proof}


In the following two results, we generalize \cite[Proposition 4.10 and Corollary 4.11]{ATV}. Note that, by Corollary \ref{2-16}, $\operatorname{Ext}^{i}_{R}(R/\mathfrak{a}, X)$ is finite for all $i< t$ whenever $\operatorname{H}^{i}_{\mathfrak{a}}(X)$ is $\mathfrak{a}$--cofinite for all $i< t$.

\begin{cor}\label{3-4}
Let $M$ be a finite $R$--module, $X$ an arbitrary $R$--module, and $s, t$ non-negative integers such that
\begin{enumerate}
  \item[\emph{(i)}]   $\operatorname{Ext}^{i}_{R}(R/\mathfrak{a}, X)$ is finite for all $i\leq s+t$,
  \item[\emph{(ii)}]  $\operatorname{H}^{i}_{\mathfrak{a}}(M, X)$ is $\mathfrak{a}$--cofinite for all $i< t$, and
  \item[\emph{(iii)}]  $\operatorname{H}^{i}_{\mathfrak{a}}(M, X)$ is minimax for all $t\leq i\leq s+ t$.
\end{enumerate}
Then $\operatorname{H}^{i}_{\mathfrak{a}}(M, X)$ is $\mathfrak{a}$--cofinite for all $i\leq s+ t$.
\end{cor}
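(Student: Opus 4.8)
The plan is to induct on $s$, using the cofiniteness criteria already established for small cohomological degrees together with the minimaxness hypothesis to propagate $\mathfrak{a}$--cofiniteness one degree at a time. The base case $s=0$ is essentially the situation where (ii) already gives $\mathfrak{a}$--cofiniteness for $i<t$, so I only need to handle the single new degree $i=t$, where $\operatorname{H}^{t}_{\mathfrak{a}}(M,X)$ is known to be minimax by (iii). The key observation is that a minimax module $Y$ with $\operatorname{Supp}_R(Y)\subseteq\operatorname{Var}(\mathfrak{a})$ is $\mathfrak{a}$--cofinite as soon as $\operatorname{Hom}_R(R/\mathfrak{a},Y)$ (equivalently $\operatorname{Ext}^0$) is finite and the higher $\operatorname{Ext}^i_R(R/\mathfrak{a},Y)$ are finite; for minimax modules this reduces to checking finiteness of $\operatorname{Hom}_R(R/\mathfrak{a},Y)$, since an $\mathfrak{a}$--torsion minimax module whose socle-type invariant $\operatorname{Hom}_R(R/\mathfrak{a},-)$ is finite is automatically $\mathfrak{a}$--cofinite.

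First I would verify the finiteness of $\operatorname{Hom}_R(R/\mathfrak{a},\operatorname{H}^{t}_{\mathfrak{a}}(M,X))$. To do this I apply Corollary \ref{2-11} (or directly Corollary \ref{2-6}) with $N=R/\mathfrak{a}$ and $\mathcal S$ the Serre subcategory of finite $R$--modules. Hypothesis (ii) guarantees, via Corollary \ref{2-16} and Lemma \ref{2-7}, that the relevant $\operatorname{Ext}^{t-i}_R(\operatorname{Tor}^R_i(R/\mathfrak{a},M),X)$ terms lie in $\mathcal S$ for $i\le t$, after first using (i) to see that $\operatorname{Ext}^{j}_R(M/\mathfrak{a}M,X)$ is finite for $j\le t$. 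The condition (ii) also makes $\operatorname{Ext}^{t+1-i}_R(R/\mathfrak{a},\operatorname{H}^{i}_{\mathfrak{a}}(M,X))$ finite for $i<t$, because each $\operatorname{H}^{i}_{\mathfrak{a}}(M,X)$ is $\mathfrak{a}$--cofinite there. Hence Corollary \ref{2-11} applies and yields $\operatorname{Hom}_R(R/\mathfrak{a},\operatorname{H}^{t}_{\mathfrak{a}}(M,X))\in\mathcal S$. Combined with minimaxness from (iii), this forces $\operatorname{H}^{t}_{\mathfrak{a}}(M,X)$ to be $\mathfrak{a}$--cofinite.

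For the inductive step, suppose $\operatorname{H}^{i}_{\mathfrak{a}}(M,X)$ is $\mathfrak{a}$--cofinite for all $i\le t+s-1$; I want the same for $i=t+s$. I would again run the finiteness argument, this time invoking Corollary \ref{2-13} with $\mathcal S$ the finite modules to control $\operatorname{Ext}^1_R(R/\mathfrak{a},\operatorname{H}^{t+s-1}_{\mathfrak{a}}(M,X))$, or more directly apply Corollary \ref{2-11} at degree $t+s$ after shifting: the now-established cofiniteness below degree $t+s$ supplies the vanishing/finiteness inputs (ii) there, while (i) supplies the $\operatorname{Tor}$--$\operatorname{Ext}$ inputs (i) through Corollary \ref{2-16} and Lemma \ref{2-7}. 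This gives $\operatorname{Hom}_R(R/\mathfrak{a},\operatorname{H}^{t+s}_{\mathfrak{a}}(M,X))\in\mathcal S$, and minimaxness from (iii) again upgrades this to $\mathfrak{a}$--cofiniteness of $\operatorname{H}^{t+s}_{\mathfrak{a}}(M,X)$.

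The main obstacle I anticipate is bookkeeping the indices so that the hypotheses (i)--(iii) of Corollary \ref{2-11} (or \ref{2-13}) are met at each stage, and in particular ensuring that the finiteness of $\operatorname{Ext}^{t+1-i}_R(R/\mathfrak{a},\operatorname{H}^{i}_{\mathfrak{a}}(M,X))$ for $i<t$ really does follow from $\mathfrak{a}$--cofiniteness rather than merely from minimaxness; this is why hypothesis (i) on $\operatorname{Ext}^i_R(R/\mathfrak{a},X)$ up to degree $s+t$ is needed to seed the $\operatorname{Tor}$--side inputs. The genuinely non-formal point is the standard but essential lemma that an $\mathfrak{a}$--torsion minimax module with finite $\operatorname{Hom}_R(R/\mathfrak{a},-)$ is $\mathfrak{a}$--cofinite, which is what converts the Serre-membership conclusions of Section \ref{2} into the desired $\mathfrak{a}$--cofiniteness.
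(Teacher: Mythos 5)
Your proof is correct and is essentially the paper's own argument: the paper's proof consists precisely of Lemma \ref{2-7}, Corollary \ref{2-11}, and Melkersson's result that an $\mathfrak{a}$--torsion minimax module with finite $\operatorname{Hom}_R(R/\mathfrak{a},-)$ is $\mathfrak{a}$--cofinite (\cite[Proposition 4.3]{Mel}), assembled by the same induction on cohomological degree that you describe. Your side invocations of Corollary \ref{2-16} and Corollary \ref{2-13} are superfluous---Lemma \ref{2-7} applied with $N=R/\mathfrak{a}$ and $L=\operatorname{Tor}^R_i(R/\mathfrak{a},M)$ already supplies the Tor--Ext inputs to Corollary \ref{2-11}---but this does not affect the validity of the argument.
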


\begin{proof}
Follows from Lemma \ref{2-7}, Corollary \ref{2-11}, and \cite[Proposition 4.3]{Mel}.
\end{proof}


\begin{cor}\label{3-5}
Let $M$ be a finite $R$--module, $X$ an arbitrary $R$--module, and $t$ a non-negative integer such that
\begin{enumerate}
\item[\emph{(i)}]  $\operatorname{Ext}^{i}_{R}(R/\mathfrak{a}, X)$ is finite for all $i\leq t$, and
\item[\emph{(ii)}] $\operatorname{H}^{i}_{\mathfrak{a}}(M, X)$ is minimax for all $i< t$.
\end{enumerate}
Then $\operatorname{H}^{i}_{\mathfrak{a}}(M, X)$ is $\mathfrak{a}$--cofinite for all $i< t$ and the $R$--module $\operatorname{Hom}_{R}(R/\mathfrak{a}, \operatorname{H}^{t}_{\mathfrak{a}}(M, X))$ is finite. Moreover, if $\operatorname{Ext}^{t+1}_{R}(R/\mathfrak{a}, X)$ is finite, then $\operatorname{Ext}^{1}_{R}(R/\mathfrak{a}, \operatorname{H}^{t}_{\mathfrak{a}}(M, X))$ is finite.
\end{cor}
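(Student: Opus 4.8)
The plan is to reduce everything to three applications of the spectral-sequence machinery already established, namely Corollaries \ref{3-4}, \ref{2-11}, and \ref{2-13}, always taking $\mathcal S$ to be the Serre subcategory of finite $R$--modules and $N = R/\mathfrak a$. The key auxiliary tool throughout is Lemma \ref{2-7}: since $\operatorname{Tor}^R_i(R/\mathfrak a, M)$ is a finite $R$--module with $\operatorname{Supp}_R(\operatorname{Tor}^R_i(R/\mathfrak a, M)) \subseteq \operatorname{Supp}_R(R/\mathfrak a)$, the finiteness of $\operatorname{Ext}^j_R(R/\mathfrak a, X)$ for $j \leq t$ (resp.\ $j \leq t+1$) propagates to the finiteness of $\operatorname{Ext}^j_R(\operatorname{Tor}^R_i(R/\mathfrak a, M), X)$ in the same range.

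For the cofiniteness assertion I would simply invoke Corollary \ref{3-4} with its roles of $s$ and $t$ taken to be $t-1$ and $0$ respectively (assuming $t \geq 1$; for $t = 0$ the statement is vacuous). Indeed, hypothesis (i) of Corollary \ref{3-4} then requires $\operatorname{Ext}^i_R(R/\mathfrak a, X)$ finite for $i \leq t-1$, which is contained in assumption (i) here; hypothesis (ii) is vacuous; and hypothesis (iii) demands that $\operatorname{H}^i_\mathfrak a(M, X)$ be minimax for $0 \leq i \leq t-1$, which is precisely assumption (ii) here. The conclusion gives $\operatorname{H}^i_\mathfrak a(M, X)$ $\mathfrak a$--cofinite for all $i \leq t-1$, i.e.\ for all $i < t$. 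In particular, by the definition of $\mathfrak a$--cofiniteness, $\operatorname{Ext}^j_R(R/\mathfrak a, \operatorname{H}^i_\mathfrak a(M, X))$ is finite for every $j$ and every $i < t$; this is what feeds the hypotheses on $\operatorname{Ext}^{t+1-i}_R(R/\mathfrak a, \operatorname{H}^i_\mathfrak a(M,X))$ (and later $\operatorname{Ext}^{t+2-i}$) in the remaining two steps.

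For the finiteness of $\operatorname{Hom}_R(R/\mathfrak a, \operatorname{H}^t_\mathfrak a(M, X))$ I would apply Corollary \ref{2-11} with $N = R/\mathfrak a$ and $\mathcal S$ the finite modules; the support hypothesis holds automatically since $\operatorname{Supp}_R(R/\mathfrak a) = \operatorname{Var}(\mathfrak a)$. Its condition (i), $\operatorname{Ext}^{t-i}_R(\operatorname{Tor}^R_i(R/\mathfrak a, M), X)$ finite for $i \leq t$, follows from Lemma \ref{2-7} as above (here the needed $\operatorname{Ext}$--degrees $t-i$ run through $0, \dots, t$), and its condition (ii), $\operatorname{Ext}^{t+1-i}_R(R/\mathfrak a, \operatorname{H}^i_\mathfrak a(M,X))$ finite for $i < t$, follows from the cofiniteness just established. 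Entirely analogously, for the ``Moreover'' clause I would apply Corollary \ref{2-13} with the same choices; the extra hypothesis $\operatorname{Ext}^{t+1}_R(R/\mathfrak a, X)$ finite is exactly what upgrades assumption (i) to finiteness of $\operatorname{Ext}^j_R(R/\mathfrak a, X)$ for all $j \leq t+1$, so that Lemma \ref{2-7} yields condition (i) of Corollary \ref{2-13} in the range $i \leq t+1$, while its condition (ii) again comes from cofiniteness.

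The argument is essentially bookkeeping, so the only real care needed --- and the step most likely to go wrong --- is matching the index shifts: one must check that the $\operatorname{Ext}$--degrees appearing as $t-i$, $t+1-i$, and $t+2-i$ in Corollaries \ref{2-11} and \ref{2-13} stay within the range where finiteness has been secured (via Lemma \ref{2-7} for the $\operatorname{Tor}$--arguments and via cofiniteness for the local cohomology arguments), and that the edge case $t=0$, where the cofiniteness claim is empty, is still covered by the $\operatorname{Hom}$--step, since Corollary \ref{2-11} with $t=0$ reduces to the finiteness of $\operatorname{Hom}_R(M/\mathfrak a M, X)$, itself a consequence of Lemma \ref{2-7}.
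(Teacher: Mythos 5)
Your proposal is correct and takes essentially the same route as the paper: the paper also obtains the cofiniteness claim from Corollary \ref{3-4} and then gets the $\operatorname{Hom}$ and $\operatorname{Ext}^1$ statements from Lemma \ref{2-7} together with Corollaries \ref{2-11} and \ref{2-13}, exactly as you do. The only (immaterial) difference is that the paper first proves the base case explicitly --- $\Gamma_{\mathfrak{a}}(M,X)$ is $\mathfrak{a}$--cofinite since $\operatorname{Hom}_R(R/\mathfrak{a}, \Gamma_{\mathfrak{a}}(M, X))\cong \operatorname{Hom}_R(M/\mathfrak{a}M, X)$ is finite by Lemma \ref{2-7} and $\Gamma_{\mathfrak{a}}(M,X)$ is minimax, invoking Melkersson's Proposition 4.3 --- and then cites Corollary \ref{3-4}, whereas you invoke Corollary \ref{3-4} directly with the degenerate parameters $(s,t)=(t-1,0)$, which amounts to the same induction.
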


\begin{proof}
Since $\operatorname{Hom}_R(R/\mathfrak{a}, \Gamma_{\mathfrak{a}}(M, X))\cong \operatorname{Hom}_R(M/\mathfrak{a}M, X)$ is finite by Lemma \ref{2-7},  $\Gamma_{\mathfrak{a}}(M, X)$ is $\mathfrak{a}$--cofinite from \cite[Proposition 4.3]{Mel}, and so $\operatorname{H}^{i}_{\mathfrak{a}}(M, X)$ is $\mathfrak{a}$--cofinite for all $i< t$ by Corollary \ref{3-4}. Thus the $R$--module $\operatorname{Hom}_{R}(R/\mathfrak{a}, \operatorname{H}^{t}_{\mathfrak{a}}(M, X))$ is finite from Lemma \ref{2-7} and Corollary \ref{2-11}. The last part follows by Lemma \ref{2-7} and Corollary \ref{2-13}.
\end{proof}


Let $X$ be an arbitrary $R$--module such that $\operatorname{Ext}^{i}_{R}(R/\mathfrak{a}, X)\in \mathcal S$ for all $i$ and let $t$ be a non-negative integer. It was shown in \cite[Theorem 4.2]{ATV} that if $\operatorname{H}^{i}_{\mathfrak{a}}(X)$ is $(\mathcal S, \mathfrak{a})$--cofinite for all $i\neq t$, then $\operatorname{H}^{t}_{\mathfrak{a}}(X)$ is $(\mathcal S, \mathfrak{a})$--cofinite (see also \cite[Proposition 2.5]{MV} and \cite[Proposition 3.11]{Mel}). We generalize this result in the following corollary which generalizes and improves \cite[Corollary 3.2.1]{RVA}. Note that, by \cite[Theorem 3.3]{BA}, $\operatorname{Ext}^{i}_{R}(R/\mathfrak{a}, X)$ is finite for all $i$ if and only if $\operatorname{Ext}^{i}_{R}(R/\mathfrak{a}, X)$ is finite for all $i\leq \operatorname{ara}(\mathfrak{a})$.

\begin{cor}\label{3-6}
Let $M$ be a finite $R$--module $($which is not necessarily with finite projective dimension$)$, $X$ an arbitrary $R$--module such that $\operatorname{Ext}^{i}_{R}(R/\mathfrak{a}, X)\in \mathcal S$ for all $i$, and $t$ a non-negative integer such that $\operatorname{H}^{i}_{\mathfrak{a}}(M, X)$ is $(\mathcal S, \mathfrak{a})$--cofinite for all $i\neq t$. Then $\operatorname{H}^{t}_{\mathfrak{a}}(M, X)$ is $(\mathcal S, \mathfrak{a})$--cofinite. In particular, if $\operatorname{Ext}^{i}_{R}(R/\mathfrak{a}, X)$ is finite for all $i\leq \operatorname{ara}(\mathfrak{a})$ and $\operatorname{H}^{i}_{\mathfrak{a}}(M, X)$ is $\mathfrak{a}$--cofinite for all $i\neq t$, then $\operatorname{H}^{t}_{\mathfrak{a}}(M, X)$ is $\mathfrak{a}$--cofinite.
\end{cor}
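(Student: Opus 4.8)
The plan is to check the two defining conditions of $(\mathcal{S}, \mathfrak{a})$--cofiniteness for $\operatorname{H}^{t}_{\mathfrak{a}}(M, X)$ in turn. The support condition $\operatorname{Supp}_R(\operatorname{H}^{t}_{\mathfrak{a}}(M, X))\subseteq \operatorname{Var}(\mathfrak{a})$ is automatic, since every generalized local cohomology module is $\mathfrak{a}$--torsion. Hence the whole content lies in showing that $\operatorname{Ext}^{s}_{R}(R/\mathfrak{a}, \operatorname{H}^{t}_{\mathfrak{a}}(M, X))\in \mathcal{S}$ for every non-negative integer $s$. I would fix such an $s$ and apply Corollary \ref{2-10} with $N= R/\mathfrak{a}$.

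The reason Corollary \ref{2-10} is available is that its standing support hypothesis $\operatorname{Supp}_R(R/\mathfrak{a})\cap \operatorname{Supp}_R(M)\cap \operatorname{Supp}_R(X)\subseteq \operatorname{Var}(\mathfrak{a})$ holds trivially, because $\operatorname{Supp}_R(R/\mathfrak{a})= \operatorname{Var}(\mathfrak{a})$. It then remains to verify its three inputs. For condition (i), note that $\operatorname{Tor}^{R}_{i}(R/\mathfrak{a}, M)$ is a finite $R$--module with $\operatorname{Supp}_R(\operatorname{Tor}^{R}_{i}(R/\mathfrak{a}, M))\subseteq \operatorname{Supp}_R(R/\mathfrak{a})$; since $\operatorname{Ext}^{j}_{R}(R/\mathfrak{a}, X)\in \mathcal{S}$ for all $j$ by hypothesis, Lemma \ref{2-7} gives $\operatorname{Ext}^{s+ t- i}_{R}(\operatorname{Tor}^{R}_{i}(R/\mathfrak{a}, M), X)\in \mathcal{S}$ for every $i$. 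Conditions (ii) and (iii) only involve $\operatorname{H}^{i}_{\mathfrak{a}}(M, X)$ for $i< t$ and $i> t$, hence only for $i\neq t$; for each such index $\operatorname{H}^{i}_{\mathfrak{a}}(M, X)$ is $(\mathcal{S}, \mathfrak{a})$--cofinite by hypothesis, so $\operatorname{Ext}^{k}_{R}(R/\mathfrak{a}, \operatorname{H}^{i}_{\mathfrak{a}}(M, X))\in \mathcal{S}$ for all $k$ (negative degrees giving the zero module, which belongs to $\mathcal{S}$). All three inputs thus hold, and Corollary \ref{2-10} yields $\operatorname{Ext}^{s}_{R}(R/\mathfrak{a}, \operatorname{H}^{t}_{\mathfrak{a}}(M, X))\in \mathcal{S}$. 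As $s$ was arbitrary, $\operatorname{H}^{t}_{\mathfrak{a}}(M, X)$ is $(\mathcal{S}, \mathfrak{a})$--cofinite.

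For the final assertion I would take $\mathcal{S}$ to be the category of finite $R$--modules, where $(\mathcal{S}, \mathfrak{a})$--cofinite coincides with $\mathfrak{a}$--cofinite. The only discrepancy is that the hypothesis provides finiteness of $\operatorname{Ext}^{i}_{R}(R/\mathfrak{a}, X)$ merely for $i\leq \operatorname{ara}(\mathfrak{a})$, while the general statement requires it for all $i$. This is resolved by \cite[Theorem 3.3]{BA}, according to which finiteness of $\operatorname{Ext}^{i}_{R}(R/\mathfrak{a}, X)$ for all $i\leq \operatorname{ara}(\mathfrak{a})$ already forces finiteness for all $i$; once this reduction is made, the general case applies directly.

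I expect condition (i) to demand the most care: one must be certain that Lemma \ref{2-7}, via Gruson's theorem, propagates membership in $\mathcal{S}$ from $\operatorname{Ext}^{\bullet}_{R}(R/\mathfrak{a}, X)$ to $\operatorname{Ext}^{\bullet}_{R}(\operatorname{Tor}^{R}_{i}(R/\mathfrak{a}, M), X)$ in \emph{all} cohomological degrees at once, which is precisely the ``for all $i$'' form of that lemma. The rest is the bookkeeping observation that the degree shifts $s+ t+ 1- i$ and $s+ t- 1- i$ appearing in Corollary \ref{2-10} are only ever evaluated at indices $i\neq t$, so that no condition ever requires information about the excluded module $\operatorname{H}^{t}_{\mathfrak{a}}(M, X)$ itself.
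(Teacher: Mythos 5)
Your proof is correct and takes essentially the same route as the paper: fix $s$, use Lemma \ref{2-7} to secure condition (i) of Corollary \ref{2-10}, then apply that corollary with $N= R/\mathfrak{a}$, and handle the ``in particular'' statement by reducing via \cite[Theorem 3.3]{BA} to the general case with $\mathcal S$ the category of finite $R$--modules. Your explicit checks of the support hypothesis, of conditions (ii) and (iii) via the $(\mathcal S, \mathfrak{a})$--cofiniteness of $\operatorname{H}^{i}_{\mathfrak{a}}(M, X)$ for $i\neq t$, and of the $\mathfrak{a}$--torsionness of $\operatorname{H}^{t}_{\mathfrak{a}}(M, X)$ merely spell out what the paper leaves implicit.
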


\begin{proof}
Assume that $s$ is a non-negative integer. Since $\operatorname{Ext}^{i}_{R}(R/\mathfrak{a}, X)\in \mathcal S$ for all $i\leq s+ t$, $\operatorname{Ext}^{s+ t- i}_{R}(\operatorname{Tor}^{R}_{i}(R/\mathfrak{a}, M), X)\in \mathcal S$ for all  $i\leq s+ t$ from Lemma \ref{2-7}. By applying Corollary \ref{2-10} with $N= R/\mathfrak{a}$, it shows that $\operatorname{Ext}^{s}_{R}(R/\mathfrak{a}, \operatorname{H}^{t}_{\mathfrak{a}}(M, X))\in \mathcal S$.
\end{proof}


In the next two results, we generalize and improve \cite[Theorems 3.7 and 3.8]{BNS}.

\begin{cor}\label{3-7}
Let $R$ be a local ring, $\mathfrak{a}$ an ideal of $R$ with $\dim(R/\mathfrak{a})\leq 2$, $M$ a finite $R$--module, $X$ an arbitrary $R$--module, and $t$ a non-negative integer such that $\operatorname{Ext}^i_R(R/\mathfrak{a}, X)$ is finite for all $i\leq t+ 1$. Then the following statements are equivalent:
\begin{enumerate}
\item[\emph{(i)}]  $\operatorname{H}^{i}_{\mathfrak{a}}(M, X)$ is an $\mathfrak{a}$--cofinite $R$--module for all $i< t$;
\item[\emph{(ii)}] $\operatorname{Hom}_R(R/\mathfrak{a}, \operatorname{H}^{i}_{\mathfrak{a}}(M, X))$ is a finite $R$--module for all $i\leq t$.
\end{enumerate}
\end{cor}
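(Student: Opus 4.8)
The plan is to prove the two implications separately, with the hypothesis $\dim(R/\mathfrak{a})\le 2$ entering only in the direction $(ii)\Rightarrow(i)$. Throughout I take $\mathcal{S}$ to be the Serre subcategory of finite $R$--modules and $N=R/\mathfrak{a}$; since $\operatorname{Supp}_R(R/\mathfrak{a})\cap\operatorname{Supp}_R(M)\cap\operatorname{Supp}_R(X)\subseteq\operatorname{Var}(\mathfrak{a})$ automatically, the support condition appearing in the Section \ref{2} corollaries is satisfied for free, and each $\operatorname{H}^i_\mathfrak{a}(M,X)$ is $\mathfrak{a}$--torsion, so has support in $\operatorname{Var}(\mathfrak{a})$.

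For $(i)\Rightarrow(ii)$ I would argue directly, without using the dimension hypothesis. If $\operatorname{H}^i_\mathfrak{a}(M,X)$ is $\mathfrak{a}$--cofinite for all $i<t$, then $\operatorname{Hom}_R(R/\mathfrak{a},\operatorname{H}^i_\mathfrak{a}(M,X))$ is finite for all $i<t$, so only $i=t$ remains. Since $\operatorname{Ext}^j_R(R/\mathfrak{a},X)$ is finite for $j\le t$, Lemma \ref{2-7} yields that $\operatorname{Ext}^{t-i}_R(\operatorname{Tor}^R_i(R/\mathfrak{a},M),X)$ is finite for all $i\le t$, and the assumed cofiniteness for $i<t$ makes $\operatorname{Ext}^{t+1-i}_R(R/\mathfrak{a},\operatorname{H}^i_\mathfrak{a}(M,X))$ finite for all $i<t$. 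These are exactly the hypotheses of Corollary \ref{2-11} with $s=0$, so $\operatorname{Hom}_R(R/\mathfrak{a},\operatorname{H}^t_\mathfrak{a}(M,X))$ is finite, giving $(ii)$.

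For $(ii)\Rightarrow(i)$ I would induct on $t$, the case $t=0$ being vacuous. Assume the statement for $t-1$ and suppose $(ii)$ holds for $t$. Restricting $(ii)$ to $i\le t-1$, and noting the Ext-finiteness hypothesis for $X$ holds through degree $t\ge(t-1)+1$, the inductive hypothesis gives that $\operatorname{H}^i_\mathfrak{a}(M,X)$ is $\mathfrak{a}$--cofinite for all $i<t-1$. It then suffices to prove that $\operatorname{H}^{t-1}_\mathfrak{a}(M,X)$ is $\mathfrak{a}$--cofinite. Here I would invoke the two-dimensional cofiniteness criterion underlying \cite{BNS}: for $\dim(R/\mathfrak{a})\le 2$, an $\mathfrak{a}$--torsion module $Y$ with $\operatorname{Supp}_R(Y)\subseteq\operatorname{Var}(\mathfrak{a})$ is $\mathfrak{a}$--cofinite as soon as $\operatorname{Hom}_R(R/\mathfrak{a},Y)$ and $\operatorname{Ext}^1_R(R/\mathfrak{a},Y)$ are finite. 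The finiteness of $\operatorname{Hom}_R(R/\mathfrak{a},\operatorname{H}^{t-1}_\mathfrak{a}(M,X))$ is immediate from $(ii)$. For $\operatorname{Ext}^1_R(R/\mathfrak{a},\operatorname{H}^{t-1}_\mathfrak{a}(M,X))$ I would apply Corollary \ref{2-13} with $t$ replaced by $t-1$ and $N=R/\mathfrak{a}$: its hypothesis $(i)$, finiteness of $\operatorname{Ext}^{t-i}_R(\operatorname{Tor}^R_i(R/\mathfrak{a},M),X)$ for $i\le t$, follows from Lemma \ref{2-7} and the Ext-finiteness of $X$ through degree $t$; its hypothesis $(ii)$, finiteness of $\operatorname{Ext}^{t+1-i}_R(R/\mathfrak{a},\operatorname{H}^i_\mathfrak{a}(M,X))$ for $i<t-1$, follows from the cofiniteness of $\operatorname{H}^i_\mathfrak{a}(M,X)$ for $i<t-1$ just established. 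This forces $\operatorname{Ext}^1_R(R/\mathfrak{a},\operatorname{H}^{t-1}_\mathfrak{a}(M,X))$ finite, so $\operatorname{H}^{t-1}_\mathfrak{a}(M,X)$ is $\mathfrak{a}$--cofinite and $(i)$ holds for $t$.

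The main obstacle is not the homological bookkeeping but the essential dependence on the dimension-two criterion: the reduction closes precisely because, when $\dim(R/\mathfrak{a})\le 2$, finiteness of $\operatorname{Hom}$ and $\operatorname{Ext}^1$ against $R/\mathfrak{a}$ already forces $\mathfrak{a}$--cofiniteness, and Corollaries \ref{2-11} and \ref{2-13} are exactly the tools that manufacture these $\operatorname{Hom}$ and $\operatorname{Ext}^1$ terms. The delicate point to monitor is the degree bookkeeping, so that Lemma \ref{2-7} is only ever invoked up to degree $t$ (where Ext-finiteness of $X$ is guaranteed) and so that the inductive hypothesis is applied at the correct stage $t-1$, whose Ext-finiteness requirement, namely through degree $t$, is still covered by the standing assumption through degree $t+1$.
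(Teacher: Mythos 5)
Your direction (i)$\Rightarrow$(ii) is correct and coincides with the paper's proof: Lemma \ref{2-7} makes $\operatorname{Ext}^{t-i}_{R}(\operatorname{Tor}^{R}_{i}(R/\mathfrak{a},M),X)$ finite for $i\le t$, and Corollary \ref{2-11} with $N=R/\mathfrak{a}$ does the rest. The direction (ii)$\Rightarrow$(i), however, has a genuine gap: you misquote the dimension-two criterion. For a local ring with $\dim(R/\mathfrak{a})\le 2$, \cite[Theorem 3.5]{BNS} says that a module $Y$ with $\operatorname{Supp}_R(Y)\subseteq\operatorname{Var}(\mathfrak{a})$ is $\mathfrak{a}$--cofinite if and only if $\operatorname{Hom}_R(R/\mathfrak{a},Y)$, $\operatorname{Ext}^{1}_{R}(R/\mathfrak{a},Y)$ \emph{and} $\operatorname{Ext}^{2}_{R}(R/\mathfrak{a},Y)$ are all finite; the ``$\operatorname{Hom}$ and $\operatorname{Ext}^1$ suffice'' criterion you invoke is the one valid for $\dim(R/\mathfrak{a})\le 1$, not $\le 2$. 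So after you produce finiteness of $\operatorname{Hom}_R(R/\mathfrak{a},\operatorname{H}^{t-1}_{\mathfrak{a}}(M,X))$ and $\operatorname{Ext}^{1}_{R}(R/\mathfrak{a},\operatorname{H}^{t-1}_{\mathfrak{a}}(M,X))$, your induction does not close: finiteness of $\operatorname{Ext}^{2}_{R}(R/\mathfrak{a},\operatorname{H}^{t-1}_{\mathfrak{a}}(M,X))$ is still required.

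The tell-tale sign is that your argument never uses two of the stated hypotheses: the Ext--finiteness of $X$ at degree $t+1$, and clause (ii) at $i=t$; if your proof were correct, the corollary would hold under strictly weaker assumptions, whereas these are exactly what the missing $\operatorname{Ext}^2$ step consumes. Indeed, apply Corollary \ref{2-10} (equivalently Theorem \ref{2-9}, as the paper does) with $N=R/\mathfrak{a}$, $s=2$, and target degree $t-1$: its hypothesis (i) asks for finiteness of $\operatorname{Ext}^{t+1-i}_{R}(\operatorname{Tor}^{R}_{i}(R/\mathfrak{a},M),X)$ for all $i$, which at $i=0$ is $\operatorname{Ext}^{t+1}_{R}(M/\mathfrak{a}M,X)$ and therefore needs Lemma \ref{2-7} together with finiteness of $\operatorname{Ext}^{j}_{R}(R/\mathfrak{a},X)$ through $j=t+1$; its hypothesis (ii) is covered by the cofiniteness of $\operatorname{H}^{i}_{\mathfrak{a}}(M,X)$ for $i<t-1$ obtained from your induction; and its hypothesis (iii), finiteness of $\operatorname{Ext}^{t-i}_{R}(R/\mathfrak{a},\operatorname{H}^{i}_{\mathfrak{a}}(M,X))$ for $i>t-1$, vanishes for $i>t$ and at $i=t$ is precisely $\operatorname{Hom}_R(R/\mathfrak{a},\operatorname{H}^{t}_{\mathfrak{a}}(M,X))$, i.e.\ clause (ii) of the corollary at $i=t$. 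With $\operatorname{Ext}^2$ supplied this way, \cite[Theorem 3.5]{BNS} applies to $\operatorname{H}^{t-1}_{\mathfrak{a}}(M,X)$ and the induction closes; this is the paper's intended argument (Lemma \ref{2-7}, Theorem \ref{2-9}, \cite[Theorem 3.5]{BNS}, and induction on $t$).
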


\begin{proof}
(i)$\Rightarrow$(ii). This follows from Lemma \ref{2-7} and Corollary \ref{2-11}.

(ii)$\Rightarrow$(i). It follows from Lemma \ref{2-7}, Theorem \ref{2-9}, \cite[Theorem 3.5]{BNS}, and using an induction on $t$.
\end{proof}


\begin{cor}\label{3-8}
Let $R$ be a local ring, $\mathfrak{a}$ an ideal of $R$ with $\dim(R/\mathfrak{a})\leq 2$, $M$ a finite $R$--module, and $X$ an arbitrary $R$--module such that $\operatorname{Ext}^i_R(R/\mathfrak{a}, X)$ is finite for all $i$. Then the following statements hold true.
\begin{enumerate}
\item[\emph{(i)}]  If $\operatorname{H}^{2i}_{\mathfrak{a}}(M, X)$ is $\mathfrak{a}$--cofinite for all $i$, then $\operatorname{H}^{i}_{\mathfrak{a}}(M, X)$ is $\mathfrak{a}$--cofinite for all $i$.
\item[\emph{(ii)}] If $\operatorname{H}^{2i+ 1}_{\mathfrak{a}}(M, X)$ is $\mathfrak{a}$--cofinite for all $i$, then $\operatorname{H}^{i}_{\mathfrak{a}}(M, X)$ is $\mathfrak{a}$--cofinite for all $i$.
\end{enumerate}
\end{cor}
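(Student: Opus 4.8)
The plan is to treat both parts at once, since they are symmetric: fix a parity and assume that $\operatorname{H}^i_\mathfrak a(M,X)$ is $\mathfrak a$-cofinite for every $i$ of that parity, the goal being to deduce $\mathfrak a$-cofiniteness for all $i$. I would prove this by induction on $i$, and the whole strategy rests on reducing the $\mathfrak a$-cofiniteness of each $\operatorname{H}^i_\mathfrak a(M,X)$ to the finiteness of only the three modules $\operatorname{Ext}^s_R(R/\mathfrak a, \operatorname{H}^i_\mathfrak a(M,X))$ for $s=0,1,2$. This is exactly where $\dim(R/\mathfrak a)\le 2$ enters: by the cofiniteness criterion available for such ideals (cf. \cite[Theorem 3.5]{BNS}), a module $Y$ with $\operatorname{Supp}_R(Y)\subseteq \operatorname{Var}(\mathfrak a)$ is $\mathfrak a$-cofinite as soon as $\operatorname{Ext}^s_R(R/\mathfrak a, Y)$ is finite for $s=0,1,2$. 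Each $\operatorname{H}^i_\mathfrak a(M,X)$ is $\mathfrak a$-torsion, so its support lies in $\operatorname{Var}(\mathfrak a)$ and the criterion applies.

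First I would set up the engine by applying Corollary \ref{2-10} with $N=R/\mathfrak a$, $t=i$, and $\mathcal S$ the class of finite $R$--modules. The support hypothesis of that corollary is automatic, since $\operatorname{Supp}_R(R/\mathfrak a)=\operatorname{Var}(\mathfrak a)$ forces the triple intersection into $\operatorname{Var}(\mathfrak a)$. Its condition (i), the finiteness of $\operatorname{Ext}^{s+i-j}_R(\operatorname{Tor}^R_j(R/\mathfrak a, M), X)$ for all $j$, also holds for free: $\operatorname{Tor}^R_j(R/\mathfrak a, M)$ is a finite module with $\operatorname{Supp}_R(\operatorname{Tor}^R_j(R/\mathfrak a, M))\subseteq \operatorname{Supp}_R(R/\mathfrak a)$, so Lemma \ref{2-7} combined with the standing hypothesis that $\operatorname{Ext}^k_R(R/\mathfrak a, X)$ is finite for all $k$ supplies the finiteness. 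Consequently, for each $s$ the finiteness of $\operatorname{Ext}^s_R(R/\mathfrak a, \operatorname{H}^i_\mathfrak a(M,X))$ will follow purely from conditions (ii) and (iii) of Corollary \ref{2-10}, i.e.\ from finiteness of $\operatorname{Ext}^{s+i+1-j}_R(R/\mathfrak a, \operatorname{H}^j_\mathfrak a(M,X))$ for $j<i$ and of $\operatorname{Ext}^{s+i-1-j}_R(R/\mathfrak a, \operatorname{H}^j_\mathfrak a(M,X))$ for $j>i$.

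The decisive numerical observation, which explains why $s\le 2$ is the correct range, is that in condition (iii) the forward indices $j>i$ with non-negative Ext-degree satisfy $i<j\le s+i-1$; this set is empty for $s=0,1$ and equals $\{i+1\}$ for $s=2$. Thus the only ``forward'' module ever needed is $\operatorname{H}^{i+1}_\mathfrak a(M,X)$, and only through $\operatorname{Hom}_R(R/\mathfrak a, \operatorname{H}^{i+1}_\mathfrak a(M,X))$. In the induction I would prove $\operatorname{H}^i_\mathfrak a(M,X)$ $\mathfrak a$-cofinite as follows. If $i$ has the assumed parity, there is nothing to prove. Otherwise the inductive hypothesis makes every $\operatorname{H}^j_\mathfrak a(M,X)$ with $j<i$ cofinite, which validates condition (ii) and the $j\le i-1$ part of condition (iii) for $s=0,1,2$; the lone remaining term is $\operatorname{H}^{i+1}_\mathfrak a(M,X)$ at $s=2$, and here the parity hypothesis does precisely its job, since $i+1$ then has the assumed parity and $\operatorname{H}^{i+1}_\mathfrak a(M,X)$ is cofinite by assumption (in part (i), $i$ odd gives $i+1$ even; in part (ii), $i$ even gives $i+1$ odd). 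This yields $\operatorname{Ext}^s_R(R/\mathfrak a, \operatorname{H}^i_\mathfrak a(M,X))$ finite for $s=0,1,2$, and the cofiniteness criterion closes the step. Note that this argument subsumes the base case, because condition (ii) is simply vacuous when $i=0$.

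I expect the main obstacle to be the bookkeeping that pins the forward-reaching term to the single module $\operatorname{H}^{i+1}_\mathfrak a(M,X)$ of the assumed parity. This is the delicate point where the restriction $\dim(R/\mathfrak a)\le 2$ (forcing us to check only $s\le 2$) and the even/odd hypothesis interlock, and it is exactly what makes the statement true; were the dimension larger, condition (iii) at higher $s$ would reach modules $\operatorname{H}^{j}_\mathfrak a(M,X)$ of the non-assumed parity with $j>i+1$, breaking the induction, which is consistent with the failure of the conclusion beyond dimension two.
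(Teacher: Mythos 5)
Your proposal is correct and takes essentially the same route as the paper: the paper's (one-line) proof is exactly the combination of Lemma \ref{2-7}, Theorem \ref{2-9} (which you invoke through its consequence, Corollary \ref{2-10}), the cofiniteness criterion of \cite[Theorem 3.5]{BNS} for ideals with $\dim(R/\mathfrak{a})\le 2$, and a strong induction on $i$. Your parity bookkeeping --- checking only $s=0,1,2$ and pinning the single forward-reaching term at $s=2$ to $\operatorname{Hom}_R(R/\mathfrak{a},\operatorname{H}^{i+1}_{\mathfrak{a}}(M,X))$, which has the assumed parity --- is precisely the content the paper leaves to the reader.
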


\begin{proof}
Follows from Lemma \ref{2-7}, Theorem \ref{2-9}, \cite[Theorem 3.5]{BNS}, and using an induction on $i$.
\end{proof}


In the following, we state the weakest possible conditions which yield the finiteness of associated prime ideals of generalized local cohomology modules. This result generalizes \cite[Corollary 2.5]{ATV} and improves \cite[Theorem 3.1]{Kh}, \cite[Theorem 3.3]{Mafi}, and \cite[Corollary 3.11]{VA} (see Remark \ref{2-12}). Recall that, an $R$--module $X$ is said to be weakly Laskerian if the set of associated prime ideals of any quotient module of $X$ is finite \cite [Definition 2.1]{DM1}. The category of weakly Laskerian $R$--modules is a Serre subcategory of the category of $R$--modules (see \cite [Lemma 2.3(i)] {DM1}).

\begin{cor}\label{3-9}
Let $X$ be an arbitrary $R$--module, $M$ a finite $R$--module, and $t$ a non-negative integer such that
\begin{enumerate}
  \item[\emph{(i)}]  $\operatorname{Ext}^{t- i}_{R}(\operatorname{Tor}^{R}_{i}(R/\mathfrak{a}, M), X)$ is weakly Laskerian for all $i\leq t$, and
  \item[\emph{(ii)}]   $\operatorname{Ext}^{t+ 1- i}_{R}(R/\mathfrak{a}, \operatorname{H}^{i}_{\mathfrak{a}}(M, X))$ is weakly Laskerian for all $i< t$.
\end{enumerate}
Then $\operatorname{Hom}_{R}(R/\mathfrak{a}, \operatorname{H}^{t}_{\mathfrak{a}}(M, X))$ is weakly Laskerian, and so $\operatorname{Ass}_{R}(\operatorname{H}^{t}_{\mathfrak{a}}(M, X))$ is finite.
\end{cor}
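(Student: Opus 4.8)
The plan is to reduce Corollary \ref{3-9} to the Serre-subcategory machinery already established, using the fact that the weakly Laskerian modules form a Serre subcategory. The target statement asserts that $\operatorname{Hom}_R(R/\mathfrak{a}, \operatorname{H}^t_\mathfrak{a}(M,X))$ is weakly Laskerian, and then that $\operatorname{Ass}_R(\operatorname{H}^t_\mathfrak{a}(M,X))$ is finite. I would handle these two claims in turn.

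For the first claim, I would set $\mathcal S$ to be the Serre subcategory of weakly Laskerian $R$--modules (which is legitimate by \cite[Lemma 2.3(i)]{DM1}, as the excerpt notes) and apply \emph{Corollary \ref{2-11}} with the choices $s=0$, $N=R/\mathfrak{a}$. The two hypotheses of Corollary \ref{2-11} are exactly conditions (i) and (ii) of the present statement: condition (i) here gives $\operatorname{Ext}^{t-i}_R(\operatorname{Tor}^R_i(R/\mathfrak{a},M),X)\in\mathcal S$ for all $i\le t$, which is hypothesis (i) of Corollary \ref{2-11}; and condition (ii) here gives $\operatorname{Ext}^{t+1-i}_R(R/\mathfrak{a},\operatorname{H}^i_\mathfrak{a}(M,X))\in\mathcal S$ for all $i<t$, which is hypothesis (ii). Note that the support condition $\operatorname{Supp}_R(N)\cap\operatorname{Supp}_R(M)\cap\operatorname{Supp}_R(X)\subseteq\operatorname{Var}(\mathfrak{a})$ required by Corollary \ref{2-11} is automatic when $N=R/\mathfrak{a}$, since $\operatorname{Supp}_R(R/\mathfrak{a})=\operatorname{Var}(\mathfrak{a})$. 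Corollary \ref{2-11} then yields $\operatorname{Hom}_R(R/\mathfrak{a},\operatorname{H}^t_\mathfrak{a}(M,X))\in\mathcal S$, i.e. it is weakly Laskerian.

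For the second claim, I would deduce finiteness of $\operatorname{Ass}_R(\operatorname{H}^t_\mathfrak{a}(M,X))$ from the first. The key identity, already used in the proof of \emph{Corollary \ref{2-8}} via \cite[Exercise 1.2.27]{BH}, is
$$\operatorname{Ass}_R(\operatorname{Hom}_R(R/\mathfrak{a},\operatorname{H}^t_\mathfrak{a}(M,X)))=\operatorname{Supp}_R(R/\mathfrak{a})\cap\operatorname{Ass}_R(\operatorname{H}^t_\mathfrak{a}(M,X))=\operatorname{Ass}_R(\operatorname{H}^t_\mathfrak{a}(M,X)),$$
the last equality holding because $\operatorname{H}^t_\mathfrak{a}(M,X)$ is $\mathfrak{a}$--torsion, so every associated prime contains $\mathfrak{a}$ and thus lies in $\operatorname{Supp}_R(R/\mathfrak{a})=\operatorname{Var}(\mathfrak{a})$. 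Since a weakly Laskerian module has, by definition, a finite set of associated primes (indeed every quotient does), the first claim forces $\operatorname{Ass}_R(\operatorname{Hom}_R(R/\mathfrak{a},\operatorname{H}^t_\mathfrak{a}(M,X)))$ to be finite, and the displayed identity transfers this finiteness to $\operatorname{Ass}_R(\operatorname{H}^t_\mathfrak{a}(M,X))$.

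The proof is therefore essentially a bookkeeping application of Corollary \ref{2-11} together with the $\mathfrak{a}$--torsion associated-prime identity, and I do not anticipate a genuine obstacle. The one point requiring care is verifying that the numerical indices in conditions (i) and (ii) match those demanded by Corollary \ref{2-11} after setting $s=0$ (so that $s+t=t$ throughout); once that alignment is confirmed the result is immediate. The subtlety worth flagging is that the hypotheses are phrased to be as weak as possible — only $\operatorname{Hom}$ (not all $\operatorname{Ext}^i$) of $\operatorname{H}^t_\mathfrak{a}(M,X)$ is concluded to be weakly Laskerian — which is exactly what is needed for the associated-prime conclusion and nothing more, consistent with the claim in the text that these are the weakest possible conditions.
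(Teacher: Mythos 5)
Your proposal is correct and follows essentially the same route as the paper: the paper likewise applies Corollary \ref{2-11} with $\mathcal S$ the Serre subcategory of weakly Laskerian modules and $N=R/\mathfrak{a}$, and then invokes the identity $\operatorname{Ass}_R(\operatorname{Hom}_{R}(R/\mathfrak{a}, \operatorname{H}^{t}_{\mathfrak{a}}(M, X)))= \operatorname{Supp}_R(R/\mathfrak{a})\cap \operatorname{Ass}_{R}(\operatorname{H}^{t}_{\mathfrak{a}}(M, X))= \operatorname{Ass}_{R}(\operatorname{H}^{t}_{\mathfrak{a}}(M, X))$ from \cite[Exercise 1.2.27]{BH} to transfer finiteness of associated primes. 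Your additional remarks (the automatic support condition for $N=R/\mathfrak{a}$ and the $\mathfrak{a}$--torsion justification for the last equality) are correct details that the paper leaves implicit.
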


\begin{proof}
Since, by \cite[Exercise 1.2.27]{BH},
$$\operatorname{Ass}_R(\operatorname{Hom}_{R}(R/\mathfrak{a}, \operatorname{H}^{t}_{\mathfrak{a}}(M, X)))= \operatorname{Supp}_R(R/\mathfrak{a})\cap \operatorname{Ass}_{R}(\operatorname{H}^{t}_{\mathfrak{a}}(M, X))= \operatorname{Ass}_{R}(\operatorname{H}^{t}_{\mathfrak{a}}(M, X)),$$
the assertion follows from Corollary \ref{2-11}.
\end{proof}
%
\bibliographystyle{amsplain}

\end{document}